\documentclass{amsart}
\usepackage{amssymb, amsmath, amsfonts, amsthm, amscd, mathrsfs, mathtools, 
  stmaryrd, ulem, braket, latexsym, xparse
}
\usepackage[left=30mm,right=30mm]{geometry}

\usepackage{xcolor}
\definecolor{Myorange}{RGB}{100,0,230}

\usepackage{hyperref,aliascnt}
\hypersetup{colorlinks=true, citecolor=Myorange, linkcolor=Myorange, urlcolor=Myorange}
\renewcommand{\eqref}[1]{\textcolor{blue}{(\ref{#1})}}

\usepackage[T1]{fontenc}
\usepackage{roboto}

\usepackage{tikz, tikz-cd}
\usetikzlibrary{calc, matrix, arrows, decorations.markings}

\usepackage{enumitem, moreenum}
\makeatletter
\newcommand*{\@TrumpCount}[1]{\ensuremath{
  \ifcase #1\or\spadesuit\or\clubsuit\else \@ctrerr \fi\relax}}
\newcommand*{\TrumpCount}[1]{%
  \expandafter\@TrumpCount\csname c@#1\endcsname}
\AddEnumerateCounter{\TrumpCount}{\@TrumpCount}{2}
\AddEnumerateCounter{\fnsymbol}{\c@fnsymbol}{9}

\makeatother

\makeatletter

\newcommand{\Mytheorem}[2]{
  \newaliascnt{#1}{thm}\newtheorem{#1}[#1]{#2}\aliascntresetthe{#1}
  \expandafter\def\csname #1autorefname\endcsname{#2}
}
\NewDocumentCommand{\MyThm}{ >{\SplitList{,}} m }{\ProcessList{#1}{\MyThmEach}}
\def\MyThmEach#1{
  \def\@@MyThmEach{\relax #1\relax}
  \expandafter\ifx\@@MyThmEach\else
  \expandafter\@MyThmEach\@@MyThmEach\relax\fi
}
\def\@MyThmEach\relax#1=#2\relax{\Mytheorem{#1}{#2}}

\theoremstyle{plain}
\newtheorem{thm}{Theorem}[section]

\MyThm{prop=Proposition, cor=Corollary, lem=Lemma}

\newtheorem{thmA}{Theorem}[section]

\newaliascnt{corA}{thmA}
\newtheorem{corA}[corA]{Corollary}
\aliascntresetthe{corA}

\theoremstyle{definition}
\MyThm{defi=Definition, rem=Remark, notation=Notation,
  reconstruction=Reconstruction, assumption=Assumption,
}

\newtheorem*{exam*}{Example}
\newtheorem*{comment*}{Comment}
\newtheorem*{rrem*}{Remark}
\newtheorem*{defi*}{Definition}


\makeatother

\makeatletter

\renewcommand{\emptyset}{\varnothing}
\newcommand{\dfn}{:\overset{\mbox{{\rm\scriptsize def}}}{=}}

\renewcommand{\amalg}{\sqcup}
\newcommand{\red}{\mathrm{red}}

\NewDocumentCommand{\MyMathOpCommands}{ >{\SplitList{,}} m }{
  \ProcessList{#1}{\MyMathOpEach}
}
\def\MyMathOpEach#1{
  \def\@@MyMathOpEach{#1==\relax}
  \expandafter\ifx\@@MyMathOpEach\else
  \expandafter\@MyMathOpEach\@@MyMathOpEach\relax\fi
}
\def\@MyMathOpEach#1=#2=\relax{
  \ifx\relax#2\relax
  \expandafter\DeclareMathOperator\csname #1\endcsname{#1}\else
  \expandafter\DeclareMathOperator\csname #1\endcsname{\@@@MyMathOpEach #2}\fi
}
\def\@@@MyMathOpEach#1={#1}
\MyMathOpCommands{Hom, Isom, ISOM=\mathbf{Isom}, im=\mathrm{Im}, id, dom, codom, et=\mathrm{\acute{e}t}, pr,
coker, Spec, Aut, End, rk=\mathrm{Rank}, trdeg=\mathrm{tr.deg}}
\DeclareMathOperator*{\colim}{\mathrm{colim}}
\DeclareMathOperator*{\plim}{\mathrm{lim}}

\NewDocumentCommand{\MyMathRmCommands}{ >{\SplitList{,}} m }{
  \ProcessList{#1}{\MyMathRmEach}
}
\def\MyMathRmEach#1{
  \def\@@MyMathRmEach{#1==\relax}
  \expandafter\ifx\@@MyMathRmEach\else
  \expandafter\@MyMathRmEach\@@MyMathRmEach\relax\fi
}
\def\@MyMathRmEach#1=#2=\relax{
  \ifx\relax#2\relax
  \expandafter\newcommand\csname #1\endcsname{\mathrm{#1}}\else
  \expandafter\newcommand\csname #1\endcsname{\@@@MyMathRmEach #2}\fi
}
\def\@@@MyMathRmEach#1={#1}
\MyMathRmCommands{sh, gp, inte=\mathrm{int}, sat, tor, free, triv=\mathrm{triv}, op}

\newcommand{\univ}[1]{\mathbf{#1}}

\newcommand{\MyCatDef}[2][]{
  \ifx\relax#1\relax\expandafter\newcommand\csname #2\endcsname{\mathsf{#2}}
  \else\expandafter\newcommand\csname #2\endcsname{#1}\fi
  \expandafter\newcommand\csname #2U\endcsname{\csname#2\endcsname_{\univ{U}}}
  \expandafter\newcommand\csname #2V\endcsname{\csname#2\endcsname_{\univ{V}}}
  \expandafter\newcommand\csname #2W\endcsname{\csname#2\endcsname_{\univ{W}}}
  \expandafter\newcommand\csname #2Class\endcsname{\text{\rm\robotolight #2}}
}
\NewDocumentCommand{\MyCategories}{ >{\SplitList{,}} m }{
  \ProcessList{#1}{\MyCatDef}}
\MyCategories{Alg,Ring,Grp,Mon,Top,Cat,Grpd,Site,Sch,Mor,Sh}

\newcommand{\LSch}{\Sch^{\log}}

\makeatletter
\newcommand{\MyMathcal}[1]{\@tfor\Ch@r:=#1\do{%
  \expandafter\edef\csname mc\Ch@r\endcsname{\noexpand\mathcal{\Ch@r}}%
  \expandafter\edef\csname omc\Ch@r\endcsname{\noexpand\overline{\noexpand\mathcal{\Ch@r}}}%
  \expandafter\edef\csname scr\Ch@r\endcsname{\noexpand\mathscr{\Ch@r}}%
  \expandafter\edef\csname bf\Ch@r\endcsname{\noexpand\mathbf{\Ch@r}}%
  \expandafter\edef\csname bb\Ch@r\endcsname{\noexpand\mathbb{\Ch@r}}%
  \expandafter\edef\csname O\Ch@r\endcsname{\noexpand\mathcal{O}_{\Ch@r}}%
  \expandafter\edef\csname M\Ch@r\endcsname{\noexpand\mathcal{M}_{\Ch@r}}%
  \expandafter\edef\csname oM\Ch@r\endcsname{\noexpand\overline{\noexpand\mathcal{M}}_{\Ch@r}}
}}
\makeatother
\MyMathcal{ABCDEFGHIJKLMNOPQRSTUVWXYZ}
\newcommand{\A}{\bbA}

\newcommand{\F}{\bbF}
\newcommand{\G}{\bbG}
\newcommand{\N}{\bbN}

\renewcommand{\P}{\bbP}
\newcommand{\Q}{\bbQ}

\newcommand{\Z}{\bbZ}

\renewcommand{\OO}[1]{\mathcal{O}_{#1}}


\newcommand{\bbbullet}{\lozenge}
\newcommand{\bbullet}{\blacklozenge}

\MyMathRmCommands{qcpt, qsep, sep, Noeth=\mathrm{N}, lNoeth=\mathrm{lN}, ft, lft, lNn}
\newcommand{\rqqs}{\{ \red, \qcpt, \qsep, \sep \}}

\newcommand{\bcolim}{\colim^\bbullet}
\newcommand{\bplim}{\plim^\bbullet}
\newcommand{\bamalg}{\amalg^\bbullet}

\newcommand{\btimes}{\times^\bbullet}

\newcommand{\Schb}[1]{\Sch_{\bbullet/#1}}
\newcommand{\Schc}[1]{\Sch_{\bbbullet/#1}}
\newcommand{\LSchb}[1]{\LSch_{\bbullet/{#1}^{\log}}}
\newcommand{\LSchc}[1]{\LSch_{\bbbullet/{#1}^{\log}}}

\renewcommand{\textsf}[1]{{\small\robotolight #1}}

\makeatother


\newcommand\blfootnote[1]{%
  \begingroup
  \renewcommand\thefootnote{}\footnote{#1}%
  \addtocounter{footnote}{-1}%
  \endgroup
}

\begin{document}





\begin{center}
  \textbf{\Large{Category-theoretic Reconstruction of Log Schemes from}}

  \vspace{0.1cm}
  \textbf{\Large Categories of Reduced fs Log Schemes}

  \vspace{0.7cm}
  \textsc{Tomoki Yuji}

  \blfootnote{\textit{Email Address}: \href{mailto:math@yujitomo.com}{\texttt{math@yujitomo.com}}\\
    \indent\textit{Date}: \today \\
  \indent 2020 \textit{Mathematics Subject Classification.} 14A21 (primary), 14L15, 14L35 (secondary).\\
  \indent\textit{Key Words and Phrases.} Category-theoretic Reconstruction; Log Schemes; Log Group Scheme; Quasi-integral Monoids.}
\end{center}


\begin{abstract}
  Let \(S^{\log}\) be a locally Noetherian fs log scheme
  and \(\bbullet/S^{\log}\) a set of properties of fs log schemes over \(S^{\log}\).
  In the present paper, we shall mainly be concerned with the properties ``reduced'', ``quasi-compact over \(S^{\log}\)'', ``quasi-separated over \(S^{\log}\)'', ``separated over \(S^{\log}\)'', and ``of finite type over \(S^{\log}\)''.
  We shall write \(\LSchb{S}\) for the full subcategory of the category of fs log schemes over \(S^{\log}\) determined by the fs log schemes over \(S^{\log}\) that satisfy every property contained in \(\bbullet/S^{\log}\).
  In the present paper, we discuss a purely category-theoretic reconstruction of the log scheme \(S^{\log}\) from the intrinsic structure of the abstract category \(\LSchb{S}\).
\end{abstract}

\setcounter{tocdepth}{1}
\tableofcontents


\section*[Introduction]{Introduction}
\hypertarget{introduction}{}

Throughout the present paper, we \textbf{fix} a Grothendieck universe \(\univ{U}\).
Let \[S^{\log}\] be a (\(\univ{U}\)-small) fs log scheme.
Write \(S\) for the underlying scheme of \(S^{\log}\).
Let \[\bbullet\] be a set of properties of morphisms of (\(\univ{U}\)-small) schemes (where we \textit{identify} properties of morphisms of schemes with certain full subcategories of the category of morphisms of schemes, cf. \hyperlink{notations}{Notations} \hyperlink{notations}{and} \hyperlink{notations}{Conventions} %
--- \hyperlink{notations and conventions -- properties}{Properties of Schemes and Log Schemes}%
).
We shall write \(\Sch_{/S}\) for the category of (\(\univ{U}\)-small) \(S\)-schemes, \(\Schb{S}\subset \Sch_{/S}\) for the full subcategory of objects of \(\Schb{S}\) that satisfy every property contained in \(\bbullet/S\) (cf. \hyperlink{notations}{Notations} \hyperlink{notations}{and} \hyperlink{notations}{Conventions} %
--- \hyperlink{notations and conventions -- properties}{Properties of Schemes and Log Schemes}%
), \(\LSch_{/S^{\log}}\) for the category of (\(\univ{U}\)-small) fs log schemes over \(S^{\log}\), and
\[\LSchb{S} \ \subset \ \LSch_{/S^{\log}}\]
for the full subcategory determined by the fs log schemes over \(S^{\log}\) whose underlying \(S\)-scheme is contained in \(\Schb{S}\).
In the present paper, we shall mainly be concerned with the situation where \(\bbullet/S^{\log}\) (cf. \hyperlink{notations}{Notations} \hyperlink{notations}{and} \hyperlink{notations}{Conventions} %
--- \hyperlink{notations and conventions -- properties}{Properties of Schemes and Log Schemes}%
) is contained in the following set of properties of log schemes over \(S^{\log}\):
\[\text{``red'', \ \ ``qcpt'', \ \ ``qsep'', \ \ ``sep'', \ \ ``ft''},\]
i.e.,
(the source scheme is) ``reduced'',
``quasi-compact over \(S^{\log}\)'',
``quasi-separated over \(S^{\log}\)'',
``separated over \(S^{\log}\)'', and
``of finite type over \(S^{\log}\)''.

In the present paper,
we consider the problem of reconstructing the log scheme \(S^{\log}\) from the intrinsic structure of the abstract category \(\LSchb{S}\).
The problem of reconstructing the scheme \(S\) from the intrinsic structure of the abstract category \(\Sch_{\bbullet/S}\) in the case where the elements of \(\bbullet/S\) amount essentially to the property of being ``finite \'etale over \(S\)'' is closely related to Grothendieck's anabelian conjectures and has been investigated by many mathematicians.
By contrast, the case where the elements of \(\bbullet/S\) differ substantially 
from the property of being ``finite \'etale over \(S\)'' has only been investigated to a limited degree.
In this case, there are some known results, mainly as follows:
In \cite[Section 1]{Mzk04}, Mochizuki gave a solution to this reconstruction problem in the case where \(S\) is locally Noetherian, and \(\bbullet = \{\mathrm{ft}\}\).
In \cite{deBr19}, van Dobben de Bruyn gave a solution to this reconstruction problem in the case where \(S\) is an arbitrary scheme, and \(\bbullet = \emptyset\).
The arguments in \cite[Section 1]{Mzk04} and \cite{deBr19} make essential use of the existence of non-reduced schemes.
On the other hand,
in \cite{YJ}, the author gave a solution to this reconstruction problem in the case where \(S\) is a locally Noetherian normal scheme, and one allows an arbitrary subset \(\bbullet \subset \rqqs\).

There are even fewer known results concerning the problem of reconstructing a log scheme \(S^{\log}\) from the intrinsic structure of the abstract category \(\LSchb{S}\).
In \cite{Mzk15} (and \cite[Section 2]{Mzk04}), S. Mochizuki proved that if \(\bbullet = \{\mathrm{ft}\}\), then a locally Noetherian fs log scheme \(S^{\log}\) may be reconstructed category-theoretically from the intrinsic structure of the abstract category \(\LSchb{S}\)
.
In \cite{HN}, Y. Hoshi and C. Nakayama gave a category-theoretic characterization of strict morphisms in the case where \(S^{\log}\) is locally Noetherian, and \(\bbullet = \{\mathrm{ft}\}\).
As discussed in \cite[Introduction]{HN}, the arguments of \cite{Mzk15} can be applied in more general situations where \(\bbullet = \{\mathrm{ft}\}\).
For instance, the condition assumed in \cite{Mzk15} that \(\bbullet = \{\mathrm{ft}\}\) may be replaced by the assumption that \(\bbullet = \{\mathrm{sep,ft}\}\) (for a detailed discussion, cf. \cite[Introduction]{HN}).
On the other hand, the proof given in \cite{Mzk15} is based on somewhat complicated combinatorial properties of monoids.
By constrast, while the arguments in \cite{HN} are somewhat more straightforward than the arguments of \cite{Mzk15}, the result of Hoshi and Nakayama depends essentially on the existence of non-separated log schemes in \(\LSch_{\{\mathrm{ft}\}/S^{\log}}\) (for a detailed discussion, cf. \cite[Introduction]{HN}).
In particular, the arguments in \cite{HN} cannot be applied in the situation, for instance, where \(\bbullet = \{\mathrm{ft, sep}\}\).
Here we note that the arguments in \cite{Mzk15} also make essential use of the existence of non-reduced schemes to give a characterization of ``SLEM'' morphisms (cf. \cite[Definition 2.1, Proposition 2.2]{Mzk15}).
Hence, the arguments in \cite{Mzk15} cannot be applied in the case, for instance, where \(\bbullet = \{\mathrm{ft, sep, red}\}\).

In the present paper, we give a \textbf{relatively simple} solution to this problem of reconstructing log structures in a situation that \textbf{generalizes} the situations discussed in \cite{Mzk04}, \cite{Mzk15}, and \cite{HN} to include the log scheme version of the situation discussed in \cite{YJ}.
Our main result is the following:

\begin{thmA}[{cf. \autoref{last cor}}]\label{main cor}
  Let \(S^{\log}, T^{\log}\) be locally Noetherian normal fs log schemes, \[\bbullet, \bbbullet \subset \{\mathrm{red, qcpt, qsep, sep, ft}\}\] [possibly empty] subsets, and \(F:\LSchb{S}\xrightarrow{\sim} \LSchc{T}\) an equivalence of categories.
  Assume that one of the following conditions \ref{last thm situation YJ intro}, \ref{last thm situation Mzk intro} holds:
  \begin{enumerate}[label=(\Alph*)]
    \item \label{last thm situation YJ intro}
    \(\bbullet, \bbbullet \subset \{\mathrm{red,qcpt,qsep,sep}\}\), and
    the underlying schemes of \(S^{\log}\) and \(T^{\log}\) are normal.
    \item \label{last thm situation Mzk intro}
    \(\bbullet = \bbbullet = \{\mathrm{ft}\}\).
  \end{enumerate}
  Then the following assertions hold:
  \begin{enumerate}
    \item 
    Let \(X^{\log}\in \LSchb{S}\) be an object.
    Then there exists an isomorphism of log schemes \(X^{\log}\xrightarrow{\sim} F(X^{\log})\) that is functorial with respect to \(X^{\log}\in \LSchb{S}\).
    \item \label{main 2 in intro}
    Assume that \(\bbullet = \bbbullet\).
    Then there exists a unique isomorphism of log schemes \(S^{\log}\xrightarrow{\sim} T^{\log}\) such that \(F\) is isomorphic to the equivalence of categories \(\LSchb{S} \xrightarrow{\sim} \LSchb{T}\) induced by composing with this isomorphism of log schemes \(S^{\log}\xrightarrow{\sim} T^{\log}\).
  \end{enumerate}
\end{thmA}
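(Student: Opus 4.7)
The plan is to deduce this corollary from \autoref{last cor}, which I expect carries out the core functorial reconstruction in both cases \ref{last thm situation YJ intro} and \ref{last thm situation Mzk intro}; the present statement is then assembled from that theorem by exploiting the terminal object of $\LSchb{S}$.

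For assertion (i), I would simply invoke \autoref{last cor}, which I anticipate yields exactly such a functorial isomorphism $X^{\log} \xrightarrow{\sim} F(X^{\log})$. The serious content --- reconstructing the underlying scheme category-theoretically (via \cite{YJ} in case \ref{last thm situation YJ intro}, and via \cite{Mzk04}, \cite{Mzk15}, \cite{HN} in case \ref{last thm situation Mzk intro}), reconstructing the log structure on top, and establishing functoriality --- is carried in that theorem.

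For assertion (ii), the key observation is that $S^{\log}$ (respectively $T^{\log}$) is the terminal object of $\LSchb{S}$ (respectively $\LSchc{T}$). Indeed, the hypothesis that $S^{\log}$ is normal implies in particular that its underlying scheme is reduced, and the identity morphism $\id_{S^{\log}}$ trivially satisfies qcpt, qsep, sep, and ft, so $S^{\log} \in \LSchb{S}$ and it dominates every other object uniquely. Since $F$ is an equivalence, it preserves terminal objects, whence a canonical isomorphism $F(S^{\log}) \cong T^{\log}$. Applying assertion (i) at $X^{\log} = S^{\log}$ and composing with this canonical isomorphism yields a distinguished isomorphism $\phi: S^{\log} \xrightarrow{\sim} T^{\log}$. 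Writing $F_\phi: \LSchb{S} \xrightarrow{\sim} \LSchb{T}$ for the equivalence induced by composing structure morphisms with $\phi$, I would then construct a natural isomorphism $F \cong F_\phi$ by applying the functoriality of (i) to the structure morphism $X^{\log} \to S^{\log}$ of each object $X^{\log} \in \LSchb{S}$: the resulting commutative square over $S^{\log} \cong T^{\log}$ shows that the isomorphism $X^{\log} \xrightarrow{\sim} F(X^{\log})$ from (i) is an isomorphism of $T^{\log}$-log schemes when the source is regarded as a $T^{\log}$-log scheme via $\phi$, which is exactly the data of a natural isomorphism $F_\phi \Rightarrow F$.

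Uniqueness of $\phi$ is then a purely formal consequence: if $\phi, \psi: S^{\log} \xrightarrow{\sim} T^{\log}$ both induce equivalences isomorphic to $F$, then $F_\phi \cong F_\psi$, and evaluating this natural isomorphism at the terminal object $\id_{S^{\log}} \in \LSchb{S}$ forces $\phi = \psi$. The main obstacle in the whole argument lies \emph{entirely} in \autoref{last cor}: reconstructing the log structure category-theoretically (and, in case \ref{last thm situation YJ intro}, doing so without access to non-reduced test objects) is delicate, whereas the passage from that theorem to the statement above is essentially an exercise in tracking terminal objects through an equivalence.
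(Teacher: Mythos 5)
Your treatment of assertion (i) and of the existence portion of assertion (ii) is fine and matches the paper: the statement here is verbatim \autoref{last cor}, whose proof reduces to \autoref{thm: functorial reconstruction general} (with the scheme-level hypothesis supplied by \cite[Corollary 6.24]{YJ} in case \ref{last thm situation YJ intro} and by \cite[Theorem 1.7 (ii)]{Mzk04} in case \ref{last thm situation Mzk intro}), and the paper's existence argument for (ii) is exactly your terminal-object construction: take $\varphi_{S^{\log}}$ composed with the canonical isomorphism $F(S^{\log})\xrightarrow{\sim}T^{\log}$, and use functoriality of the family $\{\varphi_{X^{\log}}\}$ applied to structure morphisms.

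The gap is in your uniqueness argument, which you declare to be ``purely formal.'' It is not. If $\phi,\psi\colon S^{\log}\xrightarrow{\sim}T^{\log}$ both induce equivalences isomorphic to $F$, then $F_\phi\cong F_\psi$, and the component of this natural isomorphism at the terminal object is an isomorphism $\theta_{S^{\log}}\colon S^{\log}\to S^{\log}$ in $\LSchb{T}$ satisfying only $\psi\circ\theta_{S^{\log}}=\phi$; since \emph{both} $F_\phi(S^{\log})$ and $F_\psi(S^{\log})$ are terminal in $\LSchb{T}$, terminality tells you nothing beyond $\theta_{S^{\log}}=\psi^{-1}\circ\phi$, and in particular does not force $\theta_{S^{\log}}=\id_{S^{\log}}$. (Compare: for a field $k$ with a nontrivial automorphism $\sigma$, whether ``composition with $\sigma$'' is naturally isomorphic to the identity of $\Sch_{/k}$ is precisely the kind of question that cannot be settled by terminal objects alone.) The actual content of uniqueness is that any family of automorphisms $\{\psi^{\log}_{X^{\log}}\}$ functorial in $X^{\log}\in\LSchb{S}$ and compatible with $f^{\log}=g^{\log}\circ\psi^{\log}_{S^{\log}}$ must have $\psi^{\log}_{S^{\log}}=\id_{S^{\log}}$. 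The paper proves this in three steps, each using functoriality against a specific class of test objects: compatibility with strict open immersions plus \cite[Theorem 7.24]{Johnstone} forces the underlying map of topological spaces to be $\id_{|S|}$; compatibility with the sections $\{0,1,\infty\}$ of $S^{\log}\times_{\Z}\P^1_{\Z}$ forces $\psi^{\log}_{S^{\log}\times_{\Z}\A^1_{\Z}}=\psi^{\log}_{S^{\log}}\times\id_{\A^1_{\Z}}$ and hence that $\psi^{\#}$ fixes all global functions, so the underlying morphism of schemes is $\id_S$; and compatibility with $\A^{1,\log}_{\Z}$-valued points (via \autoref{lem: A1log is push log str}) forces $\psi^{\flat}$ to fix all sections of $\mcM$, so $\psi^{\log}_{S^{\log}}=\id_{S^{\log}}$. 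None of this is an exercise in tracking terminal objects; you need to supply it (or simply cite the uniqueness portion of \autoref{last cor} directly rather than re-deriving it).
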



By combining the theory of \cite{YJ} with the above \autoref{main cor} \ref{main 2 in intro},
we conclude the following corollary (cf. \autoref{last cor A}):




\begin{corA}[{cf. \autoref{last cor A}}]
  \label{corA bb bbb}
  Let \(S^{\log}, T^{\log}\) be locally Noetherian normal fs log schemes and \[\bbullet, \bbbullet \subset \{\mathrm{red, qcpt, qsep, sep}\}\]subsets such that \(\{\mathrm{qsep, sep}\}\not\subset \bbullet\), and \(\{\mathrm{qsep, sep}\}\not\subset \bbbullet\).
  If the categories \(\LSchb{S}\) and \(\LSchc{T}\) are equivalent, then \(\bbullet = \bbbullet\), and \(S^{\log}\cong T^{\log}\).
\end{corA}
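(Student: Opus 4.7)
The plan is to reduce the corollary to Theorem A (i) together with the scheme-theoretic reconstruction and construction results of [YJ]. The strategy has two parts: first use Theorem A (i) to establish the isomorphism $S^{\log} \cong T^{\log}$; then, after identifying the bases, deduce $\bbullet = \bbbullet$ by testing against specifically chosen witnessing objects.

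For the first part, observe that under the normality hypothesis the identity structure morphism $S^{\log} \to S^{\log}$ satisfies every property in $\{\mathrm{red, qcpt, qsep, sep}\}$, so $S^{\log}$ belongs to $\LSchb{S}$ and is its terminal object; similarly for $T^{\log}$ in $\LSchc{T}$. Since any equivalence preserves terminal objects, $F(S^{\log}) \cong T^{\log}$ in $\LSchc{T}$, and applying Theorem A (i) to $X^{\log} = S^{\log}$ yields a log-scheme isomorphism $S^{\log} \cong F(S^{\log})$. Combining, $S^{\log} \cong T^{\log}$. I fix such an isomorphism and henceforth identify $T^{\log}$ with $S^{\log}$, so that $F$ becomes an equivalence $\LSchb{S} \simeq \LSchc{S}$.

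For the second part, I would exploit the functoriality clause in Theorem A (i). Given $X^{\log} \in \LSchb{S}$, the isomorphism $X^{\log} \xrightarrow{\sim} F(X^{\log})$ provided by Theorem A (i) fits, by functoriality in the structure morphism $X^{\log} \to S^{\log}$ (a morphism to the terminal object), into a commutative square with the isomorphism $S^{\log} \cong F(S^{\log})$ fixed above. Hence the morphism $F(X^{\log}) \to S^{\log}$ is isomorphic, as a morphism of log schemes, to $X^{\log} \to S^{\log}$. Since $F(X^{\log})$ lies in $\LSchc{S}$, its structure morphism satisfies every property in $\bbbullet$; invariance of each property in $\{\mathrm{red, qcpt, qsep, sep}\}$ under isomorphism of log-scheme morphisms then forces $X^{\log} \to S^{\log}$ to satisfy $\bbbullet$. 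Now suppose some $p \in \bbbullet \setminus \bbullet$. Using the techniques of [YJ] I would construct an object $X^{\log} \in \LSchb{S}$ whose structure morphism satisfies every property in $\bbullet$ but fails $p$ --- e.g.\ a non-reduced fat point at a closed point of $S$ for $p = \mathrm{red}$, a countable disjoint union for $p = \mathrm{qcpt}$, and the analogous non-(quasi-)separated constructions for $p = \mathrm{sep}$ or $\mathrm{qsep}$ --- contradicting what was just proved. Hence $\bbbullet \subset \bbullet$, and by symmetry (applied to $F^{-1}$) $\bbullet = \bbbullet$.

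The principal technical obstacle is the construction of the witnessing objects just mentioned: each must be crafted to satisfy every property in $\bbullet$ while violating the designated $p$. This is precisely where the hypothesis $\{\mathrm{qsep, sep}\} \not\subset \bbullet, \bbbullet$ plays its role, since if both $\mathrm{qsep}$ and $\mathrm{sep}$ were simultaneously required, the standard non-separated and non-quasi-separated examples available over a locally Noetherian normal base could not be tailored to live in the restricted category $\LSchb{S}$. With the stated hypothesis, the existence of the required witnesses is supplied by the constructions underlying the main result of [YJ], and the corollary follows.
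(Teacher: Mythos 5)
Your argument is essentially correct, but it takes a genuinely different route from the paper. The paper first proves \(\bbullet = \bbbullet\) (its \autoref{bbul bbbul}) by using the category-theoretic characterization of strict morphisms (\autoref{strict general}) to recover the subcategory of objects with strict structure morphism, which is equivalent to \(\Schb{S}\), and then citing the scheme-level result \cite[Corollary 4.11]{YJ}; only afterwards does it invoke \autoref{last cor} \ref{main 2} --- which requires \(\bbullet=\bbbullet\) as a hypothesis --- to obtain \(S^{\log}\cong T^{\log}\). You invert the order: you extract \(S^{\log}\cong T^{\log}\) directly from \autoref{last cor} \ref{main 1} applied to the terminal object (valid, since normality gives \(S^{\log}\in\LSchb{S}\), and part \ref{main 1} needs only condition \ref{last thm situation YJ}, not \(\bbullet=\bbbullet\)), and then re-derive \(\bbullet=\bbbullet\) by a witness-object argument that in effect reproves the log analogue of \cite[Corollary 4.11]{YJ} from the functoriality clause of part \ref{main 1}. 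Your use of functoriality to identify the structure morphism of \(F(X^{\log})\) with that of \(X^{\log}\), together with the isomorphism-invariance of the properties (they are strictly full subcategories of \(\Mor(\Sch)\)), is sound.

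The one step that needs repair is the deduction ``hence \(\bbbullet\subset\bbullet\).'' If \(\mathrm{qsep}\in\bbbullet\setminus\bbullet\) while \(\mathrm{sep}\in\bbullet\) --- a configuration \emph{not} excluded by \(\{\mathrm{qsep,sep}\}\not\subset\bbullet\) --- then no witness exists: every object of \(\LSchb{S}\) is separated, hence quasi-separated, so the one-directional argument cannot produce a contradiction and does not yield \(\bbbullet\subset\bbullet\). The conclusion is still reachable, but only by combining both directions with both hypotheses: since \(\mathrm{sep}\Rightarrow\mathrm{qsep}\) is the only implication among the four properties, the witness argument gives that if \(\bbbullet\setminus\bbullet\neq\emptyset\) then \(\bbbullet\setminus\bbullet=\{\mathrm{qsep}\}\) and \(\mathrm{sep}\in\bbullet\); then \(\{\mathrm{qsep,sep}\}\not\subset\bbbullet\) forces \(\mathrm{sep}\in\bbullet\setminus\bbbullet\), which contradicts the symmetric conclusion \(\bbullet\setminus\bbbullet\subset\{\mathrm{qsep}\}\) obtained from \(F^{-1}\). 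With that case analysis supplied, and with the routine verification that your witnesses (fat point, infinite disjoint union of reduced points, doubled line, a quasi-compact non-quasi-separated gluing, each endowed with the strict log structure pulled back from \(S^{\log}\)) do land in \(\LSchb{S}\), the proof is complete.
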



Our proof of \autoref{main cor} proceeds by giving
category-theoretic characterizations of
various properties of log schemes and morphisms of log schemes as follows:

\begin{itemize}
  \item
  In \autoref{section: quasi-int},
  we introduce some notions related to monoids and discuss various generalities that will play an important role in the theory of the present paper.
  In particular, we construct certain \textit{non-quasi-integral} push-out monoids (\autoref{cor: monoid qint}).
  Here, we recall that the notion of quasi-integral monoids was introduced by C. Nakayama (cf. \cite[Definition 2.2.4]{Nak}) to study the surjectivity of base-changes of morphisms of log schemes whose underlying morphism of sets is surjective.
  \item
  In \autoref{section: def log sch},
  we introduce some notions related to log schemes and discuss various properties of push-outs in the category of fs log schemes.
  In particular, we prove that certain push-outs exist in \(\LSchb{S}\) (cf. \autoref{cor: coprod exists}).
  \item
  In \autoref{section: Fs points},
  we give a category-theoretic characterization of the objects of \(\LSchb{S}\) whose underlying log scheme is an fs log point (cf. \autoref{prop: fs log pt}).
  This characterization also yields a category-theoretic characterization of the morphisms of \(\LSchb{S}\) whose underlying morphism of log schemes is isomorphic to a \textit{log residue field} (cf. \autoref{defi: log res fld}), i.e., the natural strict morphism that arises from the spectrum of the residue field at a point of the target log scheme (cf. \autoref{cor: log res fld}).
  \item
  In \autoref{section: str mor},
  we give a category-theoretic characterization of the morphisms in \(\LSchb{S}\) whose underlying morphism of log schemes is strict (cf. \autoref{strict general}).
  We then use this characterization and apply \cite[Corollary 4.11]{YJ} to obtain the first equality of \autoref{corA bb bbb} (cf. \autoref{bbul bbbul}).
  \item
  In \autoref{section: log like mor},
  we give a category-theoretic characterization of the morphisms of monoid objects in \(\LSchb{S}\) that represent the functor
  \begin{align*}
    \LSchb{S} &\to \Mor(\Mon) \\
    X^{\log} &\mapsto [\alpha_X(X): \Gamma(X,\mcM_X) \to \Gamma(X,\OX)],
  \end{align*}
  which arises from the log structures of the objects of \(\LSchb{S}\) (cf. \autoref{monoid object A}).
  We then use this characterization to prove the main theorem of the present paper (cf. \autoref{last cor}).
\end{itemize}


\addtocontents{toc}{\protect\setcounter{tocdepth}{0}}
\section*{Acknowledgements}

I would like to thank Professor Y. Hoshi, Professor S. Mochizuki, Professor A. Tamagawa, and Professor S. Yasuda for giving me advice on this paper and my research.


\addtocontents{toc}{\protect\setcounter{tocdepth}{1}}
\section*{Notations and Conventions}
\hypertarget{notations}{}

We shall use the notation \(\N\) to denote the additive monoid of non-negative rational integers \(n\geq 0\).
We shall use the notation \(\Z\) to denote the ring of rational integers.
We shall use the notation \(\Q\) to denote the field of fractions of \(\Z\).
Throughout the present paper, we \textbf{fix} a Grothendieck universe \(\univ{U}\).

\subsection*{Categories}
\hypertarget{notations and conventions -- categories}{}


Let \(\mcC\) be a category.
We shall write \(\mcC^{\mathrm{op}}\) for
the opposite category associated to \(\mcC\).
By a slight abuse of notation, we shall use the notation \(X\in \mcC\) to denote that \(X\) is an object of \(\mcC\).
We shall write \(\Mor(\mcC)\) for the category of morphisms of \(\mcC\), i.e., the category consisting of the following data:
\begin{itemize}
  \item
  An object of \(\Mor(\mcC)\) is a morphism in \(\mcC\).
  \item
  A morphism \([f:X\to Y] \to [g:X'\to Y']\) in \(\Mor(\mcC)\) is a pair \((h_X:X\to X', h_Y:Y\to Y')\) of morphisms in \(\mcC\) such that the following diagram commutes:
  \[\begin{tikzcd}
    X \ar[r,"h_X"]\ar[d,"f"'] & X'\ar[d,"g"] \\
    Y \ar[r,"h_Y"] & Y'.
  \end{tikzcd}\]
  \item
  The composite of morphisms is given on each component by composing morphisms in \(\mcC\).
\end{itemize}
Let \(X\in \mcC\) be an object.
We shall write \(\mcC_{/X}\) for the slice category of objects and morphisms equipped with a structure morphism to \(X\).
We shall write \(\mcC_{X/}\) for the over category of objects and morphisms equipped with a structure morphism from \(X\).

Let \(\mcD\subset \mcC\) be a full subcategory.
We shall say that \(\mcD\) is a \textbf{strictly full subcategory} of \(\mcC\) if \(\mcD\) is closed under isomorphism, i.e., for any object \(X\in \mcD\) and any isomorphism \(f:X\to Y\) in \(\mcC\), \(Y\) (hence also \(f\)) is contained in \(\mcD\).

\subsection*{Rings and Schemes}
\hypertarget{notations and conventions -- schemes}{}


We shall use the notation \(\Sch\) to denote the category of (\(\univ{U}\)-small) schemes.

Let \(f:Y\to X\) be a morphism of schemes.
We shall write \(\mathcal{O}_X\) for the structure sheaf of \(X\).
We shall write \(|X|\) for the underlying topological space of \(X\).
We shall write \(f^{\#}:\OX \to f_*\OY\) for the morphism of sheaves of rings on \(|X|\) which defines the morphism of schemes \(f:Y\to X\).
If \(|F|\subset |X|\) is a closed subset, then we shall write \(F_{\red}\) for the reduced closed subscheme of \(X\) determined by \(|F|\subset |X|\).
We shall write \(f_{\red}: X_{\red}\to Y_{\red}\) for the morphism induced by \(f\).
Let \(A\) and \(B\) be (commutative) rings (with unity).
If \(Y = \Spec(B)\), and \(X = \Spec(A)\), then we shall write \(f^{\#}: A\to B\) for the ring homomorphism induced by \(f\).
By a slight abuse of notation, if \(f^{\#}: A\to B\) is a ring homomorphism, then we shall use the notation \(f\) to denote the corresponding morphism of schemes \(\Spec(B) \to \Spec(A)\).
For any point \(x\in X\), we shall write \(k(x)\) for the residue field at \(x\in X\).

\subsection*{Properties of Schemes and Log Schemes}
\hypertarget{notations and conventions -- properties}{}


Let \(S^{\log}\) be a (\(\univ{U}\)-small) fs log scheme.
We shall write \(S\) for the underlying scheme of \(S^{\log}\).
We shall use the notation \[\LSch\] to denote the category of (\(\univ{U}\)-small) fs log schemes.
We shall write \(\Sch_{/S}\) for the category of (\(\univ{U}\)-small) \(S\)-schemes and \(\LSch_{/S^{\log}}\) for the category of (\(\univ{U}\)-small) fs log schemes over \(S^{\log}\).

  We shall refer to a strictly full subcategory (cf. \hyperlink{notations}{Notations and Conventions} --- \hyperlink{notations and conventions -- categories}{Categories}) of \(\Sch\), \(\Mor(\Sch)\), \(\LSch\), \(\Mor(\LSch)\), \(\Sch_{/S}\), and \(\LSch_{/S^{\log}}\) as a \textbf{property} of (\(\univ{U}\)-small) schemes, morphisms of (\(\univ{U}\)-small) schemes, (\(\univ{U}\)-small) fs log schemes, morphisms of (\(\univ{U}\)-small) fs log schemes, (\(\univ{U}\)-small) \(S\)-schemes, and (\(\univ{U}\)-small) fs log schemes over \(S^{\log}\), respectively.

Let \[\bbullet\] be a (not necessarily \(\univ{U}\)-small) set of properties of morphisms of (\(\univ{U}\)-small) schemes.
For any property \(\mathtt{P}\in \bbullet\), write \(\mathtt{P}/S\subset \Sch_{/S}\) for the full subcategory consisting of \(S\)-schemes whose structure morphism is contained in the full subcategory \(\mathtt{P}\subset \Mor(\Sch)\) and \(\mathtt{P}/S^{\log}\subset \LSch_{/S^{\log}}\) for the full subcategory consisting of fs log schemes over \(S^{\log}\) whose underlying \(S\)-scheme is contained in \(\mathtt{P}/S\).
By a slight abuse of notation, we shall write \(\bbullet/S \dfn \left\{\mathtt{P}/S \,\middle|\, \mathtt{P}\in \bbullet\right\}\) and
\(\bbullet/S^{\log}\dfn \left\{\mathtt{P}/S^{\log} \,\middle|\,\mathtt{P}\in \bbullet\right\}\).
In the present paper, we shall mainly be concerned with the situation where
\[\bbullet \ \subset \ \{ \ \text{red, \ qcpt, \ qsep, \ sep, \ ft} \ \},\]
and
\begin{itemize}
  \item
  ``red'' denotes the strictly full subcategory of \(\Mor(\Sch)\) consisting of morphisms whose source scheme is reduced,
  \item
  ``qcpt'' denotes the strictly full subcategory of \(\Mor(\Sch)\) consisting of quasi-compact morphisms,
  \item
  ``qsep'' denotes the strictly full subcategory of \(\Mor(\Sch)\) consisting of quasi-separated morphisms,
  \item
  ``sep'' denotes the strictly full subcategory of \(\Mor(\Sch)\) consisting of separated morphisms, and
  \item
  ``ft'' denotes the strictly full subcategory of \(\Mor(\Sch)\) consisting of morphisms of finite type.
\end{itemize}
Let \(f:X\to Y\) be a morphism of schemes.
Then we shall say that \(f\) \textbf{satisfies} every property contained in \(\bbullet\) if for any \(\mathtt{P}\in \bbullet\), \(f\in \mathtt{P}\subset \Mor(\Sch)\).
Hence, in particular, if \(\bbullet = \emptyset\), then every morphism of schemes satisfies every property contained in \(\bbullet\).
If \(\bbullet = \{\mathtt{P}\}\), and \(f\) satisfies every property contained in \(\bbullet\), then we shall say that \(f\) \textbf{satisfies} the property \(\mathtt{P}\).

We shall write \(\Sch_{\bbullet/S}\subset \Sch_{/S}\) for the full subcategory consisting of \(S\)-schemes whose structure morphisms satisfy every property contained in \(\bbullet\) and
\[\LSchb{S} \ \subset \ \LSch_{/S^{\log}}\]
for the full subcategory consisting of fs log schemes over \(S^{\log}\) whose underlying structure morphism of schemes is contained in \(\Schb{S}\).
Thus, if \(\bbullet = \emptyset\), then \(\LSchb{S} = \LSch_{/S^{\log}}\).

We shall write \(\bplim, \bcolim, \btimes, \bamalg\)
for the (inverse) limit, colimit, fiber product, and push-out in \(\LSchb{S}\) (if these exist in \(\LSchb{S}\)).
We shall write \(\plim, \colim, \times, \amalg\)
for the (inverse) limit, colimit, fiber product, and push-out in \(\LSch_{/S^{\log}}\) (if these exist in \(\LSch_{/S^{\log}}\)).
By a slight abuse of notation,
we shall use the notation \(\emptyset\) to denote the empty log scheme. 

  Let 
  \begin{itemize}
    \item \(S^{\log}\) be a (\(\univ{U}\)-small) locally Noetherian fs log scheme,
    \item \(\bbullet\subset \{\mathrm{red}, \mathrm{qcpt}, \mathrm{qsep}, \mathrm{sep}, \mathrm{ft}\}\) a subset,
    \item \(X^{\log}\in \LSchb{S}\) an object,
    \item \(\mathtt{P}\) a property of (\(\univ{U}\)-small) fs log schemes over \(S^{\log}\),
    \item \(f^{\log}\) a morphism in \(\LSchb{S}\), and
    \item \(\mathtt{Q}\) a property of morphisms of (\(\univ{U}\)-small) fs log schemes over \(S^{\log}\).
  \end{itemize}
  Then we shall say that
  \begingroup
  \addtolength\leftmargini{-0.1in}
  \begin{quote}
    \textbf{the property that} \(X^{\log}\) satisfies \(\mathtt{P}\) \textbf{may be characterized category-theoretically} from the data \((\LSchb{S}, X^{\log})\)
  \end{quote}
  \endgroup\noindent
  if for any object \(Y^{\log}\in \LSchb{S}\), any (\(\univ{U}\)-small) locally Noetherian fs log scheme \(T^{\log}\),
  any subset \(\bbbullet\subset \{\mathrm{red}, \mathrm{qcpt}, \mathrm{qsep}, \mathrm{sep}, \mathrm{ft}\}\), and
  any equivalence \(F:\LSchb{S}\xrightarrow{\sim}\LSchc{T}\),
  it holds that
  \begin{center}
    \(Y^{\log}\) satisfies \(\mathtt{P}\)
    \(\iff\) \(F(Y^{\log})\) satisfies \(\mathtt{P}\).
  \end{center}
  We shall say that
  \begingroup
  \addtolength\leftmargini{-0.1in}
  \begin{quote}
    \textbf{the property that} \(f^{\log}\) satisfies \(\mathtt{Q}\) \textbf{may be characterized category-theoretically} from the data \((\LSchb{S}, f^{\log})\)
  \end{quote}
  \endgroup\noindent
  if for any morphism \(g^{\log}\) in \(\LSchb{S}\), any (\(\univ{U}\)-small) locally Noetherian fs log scheme \(T^{\log}\),
  any subset \(\bbbullet\subset \{\mathrm{red}, \mathrm{qcpt}, \mathrm{qsep}, \mathrm{sep}, \mathrm{ft}\}\), and
  any equivalence \(F:\LSchb{S}\xrightarrow{\sim}\LSchc{T}\),
  it holds that
  \begin{center}
    \(f^{\log}\) satisfies \(\mathtt{Q}\)
    \(\iff\) \(F(f^{\log})\) satisfies \(\mathtt{Q}\).
  \end{center}

\section{Quasi-integral Monoids}
\label{section: quasi-int}

In this section,
we discuss various generalities concerning monoids that
will play an important role in the theory of the present paper.
Our main result (\autoref{cor: monoid qint}) concerns the construction of an extension of sharp fs monoids that satisfies a certain non-quasi-integrality property.

\begin{defi}\label{defi monoids}
  Let \(M\) be a (commutative) monoid.
  \begin{enumerate}
    \item We shall write \(\Mon\) for the category of (\(\univ{U}\)-small) monoids and \(\mathsf{Ab}\) for the category of (\(\univ{U}\)-small) abelian groups.
    \item
    We shall write \(M^{\gp}\) for the groupification of \(M\).
    \item
    We shall write \(M^{\times}\) for the unit group of \(M\).
    \item
    We shall write \(M^{\inte}\) for the image of the natural morphism \(M\to M^{\gp}\).
    \item \label{saturation defi}
    We shall write
    \[
    M^{\sat} \dfn \left\{ m\in M^{\gp} \, \middle| \, \exists a\in \N\setminus \{0\}, am \in M^{\inte}\right\}.
    \]
    \item
    We shall say that \(M\) is \textbf{sharp}
    if \(M^{\times} = 0\).
    \item
    We shall say that \(M\) is \textbf{integral}
    if the natural morphism \(M\to M^{\gp}\) is injective.
    If \(M\) is an integral monoid,
    then we regard \(M\) as a submonoid of \(M^{\gp}\) via
    the natural injection \(M\hookrightarrow M^{\gp}\).
    We shall write \(\Mon^{\inte}\subset \Mon\) for the full subcategory of \(\Mon\) consisting of integral monoids.
    \item
    We shall say that \(M\) is \textbf{saturated}
    if \(M\) is integral, and, moreover,
    for any element \(m\in M^{\gp}\), if there exists a positive integer \(a\in \N\setminus \{0\}\) such that \(am\in M\), then \(m\in M\).
    We shall write \(\Mon^{\sat}\subset \Mon\) for the full subcategory of \(\Mon\) consisting of saturated monoids.
    \item
    We shall say that \(M\) is \textbf{finitely generated}
    if there exist an integer \(a\in \N\) and
    a surjection \(\N^{\oplus a} \to M\) of monoids.
    \item \label{monoid generated by S}
    Let \(S\subset M\) be a subset, \(k\in \N\setminus \{0\}\) an integer, and \(m_1,...,m_k\in M\) elements.
    We shall write \(\left< S \right>\subset M\) for the submonoid of \(M\) generated by \(S\), i.e., the smallest submonoid of \(M\) that contains \(S\).
    We shall write \(\left< S, m_1,\cdots,m_k \right>\dfn \left< S \cup\{m_1,\cdots,m_k\}\right>\).
    \item
    We shall say that \(M\) is \textbf{fine}
    if \(M\) is integral and finitely generated.
    \item
    We shall say that \(M\) is \textbf{fs}
    if \(M\) is saturated and finitely generated.
  \end{enumerate}
\end{defi}

\begin{rem}\label{rem: int and sat monoids}
  \
  \begin{enumerate}
    \item \label{enumi: int rem: int and sat monoids}
    The functor \((-)^{\inte}:\Mon \to \Mon^{\inte}\)
    is a left adjoint functor to the natural inclusion
    \(\Mon^{\inte} \subset \Mon\).
    In particular,
    the limit of any diagram of integral monoids in \(\Mon\) is integral.
    \item \label{enumi: sat rem: int and sat monoids}
    The functor \((-)^{\sat}:\Mon \to \Mon^{\sat}\)
    is a left adjoint functor to the natural inclusion
    \(\Mon^{\sat} \subset \Mon\).
    In particular,
    the limit of any diagram of saturated monoids in \(\Mon\) is saturated.
    \item \label{enumi: fg saturation is fg}
    Let \(M\) be a fine monoid and \(K\) a field.
    Then observe that the natural inclusion of \(K\)-subalgebras
    \(K[M] \hookrightarrow K[M^{\sat}]\) of \(K[M^{\gp}]\)
    is integral and induces an isomorphism on quotient fields,
    hence, by a well-known result in commutative algebra, is \textit{finite}.
    This \textit{finiteness} of \(K[M^{\sat}]\) as a \(K[M]\)-module implies that
    \(M^{\sat}\) is a \textit{finitely generated monoid}.
  \end{enumerate}
\end{rem}

\begin{defi}
  Let \(f:M\to N\) be a morphism of monoids.
  \begin{enumerate}
    \item
    We shall write \(f^{\gp}:M^{\gp}\to N^{\gp}\) for the morphism induced on 
    groupifications by \(f\).
    \item
    We shall say that \(f\) is \textbf{local}
    if \(f^{-1}(N^{\times}) = M^{\times}\).
  \end{enumerate}
\end{defi}

\begin{lem}\label{lem: sharp fs generate}
  Let \(M\) be a sharp saturated monoid and \(n\in M^{\gp}\setminus M\).
  Then \(\left< M, -n \right>^{\sat}\) is sharp.
\end{lem}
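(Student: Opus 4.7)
The plan is to work inside $M^{\gp}$. First observe that $\langle M,-n\rangle$ is a submonoid of the group $M^{\gp}$, hence integral, and its groupification coincides with $M^{\gp}$ itself (since $n\in M^{\gp}$). Moreover, because $M$ is already closed under addition, every element of $\langle M,-n\rangle$ can be collected into the form $m + k(-n) = m - kn$ with $m\in M$ and $k\in\N_{\geq 0}$. Consequently, by \autoref{defi monoids} \ref{saturation defi},
\[
\langle M,-n\rangle^{\sat} \;=\; \bigl\{x\in M^{\gp} \;\big|\; \exists\, a\in \N\setminus\{0\},\ m\in M,\ k\in \N_{\geq 0}\ \text{with}\ ax = m - kn\bigr\}.
\]

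To check sharpness, suppose $x$ and $-x$ both lie in $\langle M,-n\rangle^{\sat}$. Choose positive integers $a,b$ together with $m_1,m_2\in M$ and $k_1,k_2\in \N_{\geq 0}$ such that
\[
ax \;=\; m_1 - k_1 n, \qquad -bx \;=\; m_2 - k_2 n.
\]
Taking $b$ times the first equation plus $a$ times the second, the $x$-terms cancel and we obtain
\[
(b k_1 + a k_2)\, n \;=\; b m_1 + a m_2 \;\in\; M.
\]

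Now split into two cases. If $b k_1 + a k_2 \geq 1$, then a positive multiple of $n$ lies in $M$, so by saturation of $M$ we get $n\in M$, contradicting the hypothesis $n\in M^{\gp}\setminus M$. Otherwise $bk_1 + ak_2 = 0$, which forces $k_1 = k_2 = 0$, so $ax = m_1\in M$ and $-bx = m_2\in M$; by saturation of $M$ this yields $x,-x\in M$, and then the sharpness $M^{\times}=0$ gives $x=0$.

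The argument is essentially a one-step calculation once the explicit description $\langle M,-n\rangle = M + \N(-n)$ is in hand; there is no serious obstacle. The only subtlety to be careful about is that the hypothesis $n\notin M$ is used precisely to rule out the bad case in which the $n$-coefficients do not cancel, so it is important to isolate that case cleanly via the linear combination above.
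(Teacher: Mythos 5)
Your proof is correct and follows essentially the same route as the paper: both reduce sharpness to the identity $(bk_1+ak_2)n = bm_1+am_2\in M$ obtained by cancelling the $x$-terms, then use $n\notin M$ together with saturation of $M$ to force $k_1=k_2=0$, and finish with saturation and sharpness of $M$. The only cosmetic difference is that you phrase "unit of $\left<M,-n\right>^{\sat}$" as "$x$ and $-x$ both lie in $\left<M,-n\right>^{\sat}$" and unwind the saturation into a single explicit description, which is harmless since the submonoid of $M^{\gp}$ is integral with groupification $M^{\gp}$.
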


\begin{proof}
  Write \(N \dfn \left< M, -n \right>^{\sat}\).
  Let \(\tilde{n}\in N^{\times}\) be an element.
  Then \(\tilde{n}, -\tilde{n}\in N^{\times}\subset N = \left< M, -n \right>^{\sat}\).
  Hence, by \autoref{defi monoids} \ref{saturation defi}, there exist \textbf{positive} integers \(a_1,a_2 \geq 1\) such that \(a_1\tilde{n}, -a_2\tilde{n}\in \left< M, -n \right>\).
  Moreover, by \autoref{defi monoids} \ref{monoid generated by S}, there exist non-negative integers \(b_1, b_2\in \N\), and elements \(m_1,m_2\in M\) such that
  \[
    a_1\tilde{n} = m_1 - b_1n, \ \ \text{and} \ \
    -a_2\tilde{n} = m_2 - b_2n.
  \]
  Thus it holds that \(a_2(m_1 - b_1n) + a_1(m_2 - b_2n) = a_2a_1\tilde{n} - a_1a_2\tilde{n} = 0\),
  hence \[(a_2b_1 + a_1b_2)n = a_2m_1 + a_1m_2 \in M.\]
  Since \(n\in M^{\gp}\setminus M\), and \(M\) is saturated, it holds that \(a_2b_1 + a_1b_2 = 0\).
  Since \(a_1, a_2\geq 1\), and \(b_1,b_2\in \N\), it holds that \(b_1 = b_2 = 0\).
  Thus \(a_1\tilde{n} = m_1\in M\), and \(-a_2\tilde{n} = m_2\in M\).
  Since \(a_1,a_2 \geq 1\), and \(M\) is saturated, it holds that \(\tilde{n}, -\tilde{n}\in M\).
  Since \(M\) is sharp, \(\tilde{n} = 0\).
  This completes the proof of \autoref{lem: sharp fs generate}.
\end{proof}



\begin{cor}\label{cor: sharp fs generate n elements}
  Let \(M\) be a monoid, \(N\) a sharp fs monoid, and
  \(i_1,i_2:N\to M\) morphisms of monoids.
  Assume that neither \(i_1\) nor \(i_2\) is injective.
  Then there exist elements \(n_1,n_2\in N^{\gp}\setminus N\)
  such that \(\left< N,-n_1,-n_2\right>^{\sat}\) is a sharp fs monoid, and
  \(i_1^{\gp}(n_1) = i_2^{\gp}(n_2) = 0\).
\end{cor}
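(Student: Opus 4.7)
The plan is to apply \autoref{lem: sharp fs generate} twice in succession: first to $N$ to construct an intermediate sharp fs monoid $M_1 := \left<N,-n_1\right>^{\sat}$, and then to $M_1$ to adjoin $-n_2$. The only nontrivial ingredient beyond \autoref{lem: sharp fs generate} is the elementary observation that for any sharp integral monoid $P$, the intersection $P \cap (-P)$ inside $P^{\gp}$ equals $\{0\}$ (since an element and its negative both lying in $P$ become a unit in $P$, hence vanish by sharpness).

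First I would construct $n_1$. Since $i_1$ is not injective, there exist distinct $a,b \in N$ with $i_1(a) = i_1(b)$; as $N$ is integral, $a - b$ is a nonzero element of the kernel $K_1 := \ker(i_1^{\gp}: N^{\gp} \to M^{\gp})$. If $K_1$ were contained in $N$, then $K_1 = -K_1 \subset N \cap (-N) = \{0\}$ by the observation above, contradicting $K_1 \neq 0$. So I pick any $n_1 \in K_1 \setminus N$. Then $n_1 \in N^{\gp} \setminus N$ and $i_1^{\gp}(n_1) = 0$, so \autoref{lem: sharp fs generate} gives that $M_1 := \left<N,-n_1\right>^{\sat}$ is sharp. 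Moreover $\left<N,-n_1\right>$ is integral (being a submonoid of $N^{\gp}$) and finitely generated (inheriting generators from $N$ and adjoining $-n_1$), hence fine; by \autoref{rem: int and sat monoids}~\ref{enumi: fg saturation is fg} its saturation $M_1$ is therefore a sharp fs monoid. Note that $M_1^{\gp} = N^{\gp}$.

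Next I would construct $n_2$ by repeating the argument with $M_1$ in place of $N$. The monoid $M_1$ is sharp saturated, so $M_1 \cap (-M_1) = \{0\}$ in $M_1^{\gp} = N^{\gp}$. Since $i_2$ is not injective, $K_2 := \ker(i_2^{\gp}: N^{\gp} \to M^{\gp})$ is a nonzero subgroup of $N^{\gp}$; the same argument shows $K_2 \not\subset M_1$. I pick $n_2 \in K_2 \setminus M_1$. Since $N \subset M_1$, we have $n_2 \in N^{\gp} \setminus N$ and $i_2^{\gp}(n_2) = 0$. Applying \autoref{lem: sharp fs generate} again with $M_1$ in the role of $M$ and $n_2$ in the role of $n$, I conclude that $\left<M_1,-n_2\right>^{\sat}$ is sharp, and the same integrality/finite-generation argument (together with \autoref{rem: int and sat monoids}~\ref{enumi: fg saturation is fg}) shows it is fs.

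The only remaining point is the identification $\left<M_1,-n_2\right>^{\sat} = \left<N,-n_1,-n_2\right>^{\sat}$. The left-hand side is a saturated submonoid of $N^{\gp}$ containing $N$, $-n_1$, and $-n_2$, so it contains the right-hand side; conversely the right-hand side is saturated and contains $M_1 = \left<N,-n_1\right>^{\sat}$ (by saturation-closure) as well as $-n_2$, so it contains the left-hand side. This completes the plan. The main (minor) obstacle is simply correctly orchestrating the two applications of \autoref{lem: sharp fs generate} and verifying that one can always pick $n_i$ in the right complement; the sharpness/integrality obstruction $P \cap (-P) = \{0\}$ handles both instances uniformly.
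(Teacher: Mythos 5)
Your proposal is correct and follows essentially the same route as the paper: pick $n_1\in\ker(i_1^{\gp})\setminus N$ using sharpness of $N$, apply \autoref{lem: sharp fs generate} together with \autoref{rem: int and sat monoids}~\ref{enumi: fg saturation is fg} to get the sharp fs monoid $L_1=\left<N,-n_1\right>^{\sat}$, then repeat with $L_1$ in place of $N$ to choose $n_2\in\ker(i_2^{\gp})\setminus L_1$. The extra details you supply (the observation $P\cap(-P)=\{0\}$ for sharp integral $P$, and the identification $\left<L_1,-n_2\right>^{\sat}=\left<N,-n_1,-n_2\right>^{\sat}$) are exactly the steps the paper leaves implicit, and they are verified correctly.
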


\begin{proof}
  Since \(i_1\) is not injective, \(\ker(i_1^{\gp})\neq 0\).
  Since \(N\) is sharp, \(\ker(i_1^{\gp}) \not\subset N\).
  Let \(n_1\in \ker(i_1^{\gp})\setminus N\) be an element.
  Then, by \autoref{rem: int and sat monoids} \ref{enumi: fg saturation is fg} and \autoref{lem: sharp fs generate},
  \(L_1 \dfn \left< N,-n_1\right>^{\sat}\) is a sharp fs monoid.
  Since \(n_1\in \ker(i_1^{\gp})\),
  \(i_1^{\gp}(n_1) = 0\).

  Since \(i_2\) is not injective, \(\ker(i_2^{\gp})\neq 0\).
  Since \(L_1\) is sharp, \(\ker(i_2^{\gp}) \not\subset L_1\).
  Let \(n_2\in \ker(i_2^{\gp})\setminus L_1\) be an element.
  Then, by \autoref{rem: int and sat monoids} \ref{enumi: fg saturation is fg} and \autoref{lem: sharp fs generate},
  \(\left< L_1,-n_2\right>^{\sat} = \left< N,-n_1,-n_2\right>^{\sat}\)
  is a sharp fs monoid.
  Since \(n_2\in \ker(i_2^{\gp})\),
  \(i_2^{\gp}(n_2) = 0\).
  This completes the proof of \autoref{cor: sharp fs generate n elements}.
\end{proof}

\begin{defi}
  We shall say that a monoid \(M\) is
  \textbf{quasi-integral} if
  for any \(m,n\in M\),
  the equality \(m+n = m\) implies that \(n = 0\), or, equivalently, any element of \(M\) that becomes trivial in \(M^{\gp}\) is trivial in \(M\).
\end{defi}




In \autoref{appendix Nak Lem}, we prove an extension of the following lemma to the case where
\(M\), \(L\) are quasi-integral, and \(N\) is an arbitrary monoid (cf. \autoref{Nak extension}).

\begin{lem}[{cf. \cite[Lemma 2.2.6 (i)]{Nak}, \autoref{Nak extension}}]
  \label{lem: qint Nak in other words}
  Let \(L,M,N\) be sharp fs monoids and
  \(f:N\to M, g:N\to L\) local morphisms of monoids.
  Write \(P \dfn M\amalg_N L\).
  Then the following assertions are equivalent:
  \begin{enumerate}
    \item \label{Nak P is qint}
    \(P\) is quasi-integral.
    \item \label{Nak qint condition}
    For any element \(n\in N^{\gp}\),
    if \(f^{\gp}(n) \in M\), and \(-g^{\gp}(n)\in L\),
    then \(f^{\gp}(n) = 0\), and \(g^{\gp}(n) = 0\).
  \end{enumerate}
\end{lem}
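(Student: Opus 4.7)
The plan is to view $P = M \amalg_N L$ as the pushout in the category $\Mon$ of commutative monoids and to analyze quasi-integrality through the natural map $P \to P^{\gp}$, where $P^{\gp} = (M^{\gp} \oplus L^{\gp})/H$ with $H = \langle (f^{\gp}(n), -g^{\gp}(n)) : n \in N^{\gp}\rangle$ is the pushout of the groupifications in the category of abelian groups. Recalling the equivalent formulation in the definition, $P$ is quasi-integral if and only if the map $P \to P^{\gp}$ has trivial kernel. Since $M$ and $L$ are integral, the embeddings $M \hookrightarrow M^{\gp}$ and $L \hookrightarrow L^{\gp}$ are injective, so a class $[(m, l)] \in P$ represented by $(m, l) \in M \oplus L$ maps to $0$ in $P^{\gp}$ if and only if there exists $n \in N^{\gp}$ with $m = f^{\gp}(n)$ and $l = -g^{\gp}(n)$.

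The technical heart of the proof is the observation that, under the hypotheses of the lemma, the equivalence class of $(0, 0) \in M \oplus L$ under the pushout congruence $\sim_P$ consists of $(0, 0)$ alone. The defining relation $R_0 = \{((f(n) + m, l),\, (m, g(n) + l)) : n \in N,\, m \in M,\, l \in L\}$ is already closed under translation by elements of $M \oplus L$; hence $\sim_P$ coincides with the ordinary (transitive, symmetric) equivalence closure of $R_0$, and to control the class of $(0, 0)$ it suffices to rule out nontrivial single-step neighbors. If $(0, 0) = (f(n) + m, l)$, the sharpness of $M$ forces $f(n) = m = 0$ and $l = 0$, and then the locality of $f$ together with the sharpness of $N$ forces $n = 0$; the partner $(m, g(n) + l)$ is then also $(0, 0)$. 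The mirror case uses sharpness of $L$ and locality of $g$ analogously.

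With this observation in hand, both implications are short. For \ref{Nak qint condition} $\Rightarrow$ \ref{Nak P is qint}: any $[(m, l)] \in P$ in the kernel of $P \to P^{\gp}$ satisfies $m = f^{\gp}(n) \in M$ and $-g^{\gp}(n) = l \in L$ for some $n \in N^{\gp}$; the hypothesis then yields $f^{\gp}(n) = g^{\gp}(n) = 0$, so $m = 0$ in $M^{\gp}$ hence in $M$ by integrality, and likewise $l = 0$, giving $[(m, l)] = 0$ in $P$. Conversely, for \ref{Nak P is qint} $\Rightarrow$ \ref{Nak qint condition}: given $n \in N^{\gp}$ with $f^{\gp}(n) \in M$ and $-g^{\gp}(n) \in L$, the element $[(f^{\gp}(n), -g^{\gp}(n))] \in P$ lies in the kernel of $P \to P^{\gp}$ by construction, and quasi-integrality makes it trivial; thus $(f^{\gp}(n), -g^{\gp}(n)) \sim_P (0, 0)$, and the key observation forces $(f^{\gp}(n), -g^{\gp}(n)) = (0, 0)$ in $M \oplus L$, which is exactly the conclusion.

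The main obstacle is the key observation that the $\sim_P$-class of $(0, 0)$ is trivial; this is the step where sharpness of $M, N, L$ and locality of $f, g$ are used essentially, and without them small explicit monoid pushout examples already show that nontrivial elements of $M \oplus L$ can become identified with $(0, 0)$ in $P$, so that the equivalence of \ref{Nak P is qint} and \ref{Nak qint condition} fails.
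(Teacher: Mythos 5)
Your proof is correct. The paper's in-text proof of this lemma is only a citation to Nakayama, but its self-contained argument in \autoref{appendix Nak Lem} (\autoref{lem: quot is surj}--\autoref{Nak extension}) follows essentially the same route you take: represent every element of \(P\) by a pair \((m,l)\in M\oplus L\), identify the kernel of \(P\to P^{\gp}\) with the classes of the pairs \((f^{\gp}(n),-g^{\gp}(n))\), \(n\in N^{\gp}\), using that groupification preserves push-outs, and reduce both implications to the fact that \((0,0)\) is the only element of \(M\oplus L\) mapping to \(0\in P\). The one genuine difference is how that last fact is established: you unwind the pushout congruence, note that the generating relation is already translation-closed, and check that \((0,0)\) has no nontrivial single-step neighbours, using sharpness of \(M\), \(L\), \(N\) and locality of \(f\), \(g\) exactly where you indicate; the paper instead maps everything to the two-element monoid \((\F_2,\times)\) via the unique local morphisms and reads off that \(\pi\colon M\times L\to P\) is local and \(P\) is sharp (\autoref{lem: sharp fs pushout sharp}), which gives \(\pi^{-1}(0)=\{(0,0)\}\) without describing the congruence. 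Both are valid; the \((\F_2,\times)\) device is slicker and reusable, while your direct computation is more elementary and makes explicit which single relations could in principle collapse \((0,0)\).
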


\begin{proof}
  \autoref{lem: qint Nak in other words} follows immediately from \cite[Lemma 2.2.6 (i)]{Nak}. 
%
\end{proof}

\begin{cor}\label{cor: monoid qint}
  Let \(M,N\) be sharp fs monoids and
  \(i_1,i_2: N\to M\) local morphisms of monoids.
  Assume that neither \(i_1\) nor \(i_2\) is injective.
  Then there exists a sharp fs monoid \(L\) such that
  \(N\subset L\subset N^{\gp}\), and
  neither \(M\amalg_{i_1,N}L\) nor \(M\amalg_{i_2,N}L\) is quasi-integral.
\end{cor}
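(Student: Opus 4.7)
The plan is to combine \autoref{cor: sharp fs generate n elements} with \autoref{lem: qint Nak in other words} in a direct manner. First I would apply \autoref{cor: sharp fs generate n elements} to the pair \(i_1,i_2:N\to M\) to obtain elements \(n_1,n_2 \in N^{\gp}\setminus N\) such that \(L \dfn \left<N,-n_1,-n_2\right>^{\sat}\) is a sharp fs monoid and \(i_1^{\gp}(n_1) = i_2^{\gp}(n_2) = 0\). The inclusions \(N \subset L \subset N^{\gp}\) are immediate from the construction, so \(L\) is the candidate monoid we are seeking.

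Next, for each \(k \in \{1,2\}\), I would verify that the pushout \(M \amalg_{i_k,N} L\) fails to be quasi-integral by invoking the equivalence \eqref{Nak P is qint} \(\Leftrightarrow\) \eqref{Nak qint condition} of \autoref{lem: qint Nak in other words}, applied with \(f = i_k : N \to M\) and \(g : N \hookrightarrow L\) the natural inclusion. Both morphisms are local: \(i_k\) by hypothesis, and the inclusion \(N\hookrightarrow L\) because both \(N\) and \(L\) are sharp (so \(N^{\times} = L^{\times} = 0\)). Hence the lemma is applicable in both cases.

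To show that condition \eqref{Nak qint condition} fails for the pushout along \(i_k\), I would test it on the element \(n_k \in N^{\gp}\). On the one hand, \(i_k^{\gp}(n_k) = 0\), so certainly \(i_k^{\gp}(n_k) \in M\); on the other hand, since \(-n_k\) is by definition one of the generators of \(L\) over \(N\), we have \(-g^{\gp}(n_k) = -n_k \in L\). However, \(g^{\gp}(n_k) = n_k\), and since \(n_k \in N^{\gp}\setminus N\) we have in particular \(n_k \neq 0\) in \(L^{\gp} = N^{\gp}\). Thus the implication ``\(f^{\gp}(n_k)\in M\) and \(-g^{\gp}(n_k)\in L\) \(\Rightarrow\) \(f^{\gp}(n_k)=0\) and \(g^{\gp}(n_k)=0\)'' fails for \(n = n_k\), and \autoref{lem: qint Nak in other words} forces \(M \amalg_{i_k,N} L\) to be non-quasi-integral.

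The argument is essentially a bookkeeping exercise once the two preceding results are in place; I do not foresee any serious obstacle. The only subtlety worth noting is the need to justify locality of the inclusion \(N\hookrightarrow L\), which is why sharpness of \(L\) (guaranteed by \autoref{cor: sharp fs generate n elements}) is crucial: without it, one could not directly apply \autoref{lem: qint Nak in other words}.
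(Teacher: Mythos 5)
Your proposal is correct and follows essentially the same route as the paper: apply \autoref{cor: sharp fs generate n elements} to produce \(n_1,n_2\) and \(L=\left<N,-n_1,-n_2\right>^{\sat}\), then for each \(k\) use \(n_k\) itself to violate condition \eqref{Nak qint condition} of \autoref{lem: qint Nak in other words} (since \(i_k^{\gp}(n_k)=0\in M\), \(-n_k\in L\), but \(n_k\neq 0\)). The verification of locality of \(N\hookrightarrow L\) via sharpness is exactly the point the paper also relies on.
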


\begin{proof}
  Since neither \(i_1\) nor \(i_2\) is injective, it follows from \autoref{cor: sharp fs generate n elements} that there exist elements \(n_1,n_2\in N^{\gp}\setminus N\) such that \(L \dfn \left< N,-n_1,-n_2\right>^{\sat}\) is a sharp fs monoid, and \(i_1^{\gp}(n_1) = i_2^{\gp}(n_2) = 0\).
  For each \(k\in \{1,2\}\),
  since \(M,N,L\) are sharp fs monoids, \(i_k:N\to M\) is local, the inclusion morphism \(N\hookrightarrow L\) is local, \(i_k^{\gp}(n_k) = 0 \in M\), \(-n_k \in L\), and \(n_k \neq 0\), it follows from \autoref{lem: qint Nak in other words} that \(M\amalg_{i_k,N}L\) is not quasi-integral.
  This completes the proof of \autoref{cor: monoid qint}.
\end{proof}

\section{Some Remarks on Log Schemes}
\label{section: def log sch}

In this section,
we introduce some notions related to log schemes and
give proofs of several elementary results on log schemes.
In particular, we prove that certain push-outs exist in \(\LSchb{S}\) (cf. \autoref{cor: coprod exists}).

\begin{defi}\label{defi log sch}
  Let \(X^{\log}, Y^{\log}\) be log schemes and \(f^{\log} : X^{\log} \to Y^{\log}\) a morphism of log schemes
  (cf. \cite[Section 1]{Kato}).
  \begin{enumerate}
    \item
    We shall write \(X\) for the underlying scheme of the log scheme \(X^{\log}\).
    We shall write \(f\) for the underlying morphism of schemes \(f:X\to Y\) of the morphism of log schemes \(f^{\log}\).
    \item
    We shall write \(\alpha_X:\mcM_X\to \mcO_X\) for the morphism of sheaves of monoids on the \'etale site of \(X\) which defines the log structure of \(X^{\log}\).
    We shall write \(\oMX \dfn \MX/(\alpha_X^{-1}\OX^{\times})\).
    \item
    We shall write \(f^{\flat}:f^{-1}\mcM_Y\to \mcM_X\) for the morphism of sheaves of monoids on the \'etale site of \(X\) which defines the morphism of log schemes \(f^{\log}: X^{\log}\to Y^{\log}\).
    We shall write \(\bar{f}^{\flat}:f^{-1}\oMY\to \oMX\) for
    the morphism of sheaves of monoids on the \'etale site of \(X\) induced by \(f^{\flat}:f^{-1}\MY\to \MX\).
    \item
    We shall write \(f^*\alpha_Y:f^*\mcM_Y\to \mcO_X\) for the log structure on \(X\) determined by the pull-back of the log structure \(\alpha_Y:\mcM_Y\to \mcO_Y\) on \(Y\) via \(f\).
    \item \label{Zlog defi}
    Let \(|Z|\subset |X|\) be a closed subspace.
    Then we shall write \(Z^{\log}_{\red}\) for the log scheme
    whose underlying scheme is \(Z_{\red}\), and
    whose log structure is induced from \(X^{\log}\)
    via the natural closed immersion \(Z_{\red}\hookrightarrow X\).
    We shall write \((X^{\log})_{\red} \dfn X^{\log}_{\red}\).
  \end{enumerate}
\end{defi}

\begin{defi}\label{defi: log morphs}
  Let \(X^{\log}, Y^{\log}\) be log schemes,
  \(f^{\log}: X^{\log} \to Y^{\log}\) a morphism of log schemes,
  \(\mathtt{P}\) a property of schemes
  [such as ``quasi-compact'', ``separated'', ``quasi-separated'',
  ``reduced'', ``connected''], and
  \(\mathtt{Q}\) a property of morphisms of schemes
  [such as ``quasi-compact'', ``separated'', ``quasi-separated'',
  ``of finite type'', ``\'{e}tale''].
  \begin{enumerate}
    \item \label{enumi: Xlog satisfies P}
    We shall say that \(X^{\log}\) satisfies \(\mathtt{P}\) if
    the underlying scheme \(X\) satisfies \(\mathtt{P}\).
    \item \label{enumi: flog satisfies Q}
    We shall say that \(f^{\log}\) satisfies \(\mathtt{Q}\) if
    the underlying morphism of schemes \(f\) satisfies \(\mathtt{Q}\).
    \item
    We shall say that \(f^{\log}\) is \textbf{strict}
    (or, in the terminology of \cite{Mzk04}, \textbf{scheme-like}),
    if the morphism of sheaves of monoids \(f^*\mcM_Y\to \mcM_X\) on the \'etale site of \(X\) induced by \(f^{\flat}\) is an isomorphism.
    \item
    We shall say that \(f^{\log}\) is a \textbf{strict closed immersion} if
    \(f^{\log}\) is strict and a closed immersion.
    \item
    We shall say that \(f^{\log}\) is a \textbf{strict open immersion} if
    \(f^{\log}\) is strict and an open immersion.
    \item \label{enumi: str et neighb defi: log morphs}
    Let \(\bar{x}\to X\) be a geometric point.
    We shall say that a morphism of log schemes
    \(i^{\log}: U^{\log}\to X^{\log}\) is a
    \textbf{strict \'{e}tale neighborhood} of \(\bar{x}\) if \(i:U\to X\) is an \'{e}tale neighborhood of \(\bar{x}\), and \(i^{\log}\) is strict.
  \end{enumerate}
\end{defi}

\begin{defi}
  Let \(X^{\log}\) be a log scheme.
  \begin{enumerate}
    \item
    We shall say that the log structure of \(X^{\log}\) is \textbf{integral} if for any geometric point \(\bar{x} \to X\),
    the stalk \(\mcM_{X,\bar{x}}\) of \(\mcM_X\) at \(\bar{x}\to X\) is an integral monoid.
    \item
    We shall say that the log structure of \(X^{\log}\) is \textbf{saturated}
    if for any geometric point \(\bar{x} \to X\),
    the stalk \(\mcM_{X,\bar{x}}\) of \(\mcM_X\) at \(\bar{x}\to X\) is a saturated monoid.
    \item
    We shall say that \(X^{\log}\) \textbf{has a global chart}
    if there exist a monoid \(M\) and a morphism of monoids \(M\to \Gamma(X,\OX)\) such that the log structure of \(X^{\log}\) is isomorphic to the log structure associated to the adjoint morphism of sheaves of monoids \(\tilde{M} \to \OX\) (cf. \cite[(1.1), (1.3)]{Kato}), where \(\tilde{M}\) is the constant sheaf on the \'etale site of \(X\) associated to \(M\).
    In this situation, we shall refer to the morphism of monoids \(M\to \Gamma(X,\OX)\), or, equivalently, the morphism of sheaves of monoids \(\tilde{M}\to \OX\) on \(X\), as a \textbf{global chart} of \(X^{\log}\).
    \item
    Let \(\bar{x}\to X\) be a geometric point.
    Then we shall say that \(X^{\log}\) has a \textbf{chart} at \(\bar{x}\) if there exists a strict \'{e}tale neighborhood (cf. \autoref{defi: log morphs} \ref{enumi: str et neighb defi: log morphs}) \(U^{\log}\to X^{\log}\) of \(\bar{x}\) such that the log scheme \(U^{\log}\) has a global chart \(M\to \Gamma(U,\OU)\).
    In this situation, we shall say that \((U^{\log}\to X^{\log}, M\to \Gamma(U,\OU))\) is a \textbf{chart} at \(\bar{x}\).
    \item
    Assume that the log structure of \(X^{\log}\) is integral.
    Then we shall say that \(X^{\log}\) is a \textbf{fine} log scheme if for any geometric point \(\bar{x} \to X\), \(X^{\log}\) has a chart at \(\bar{x}\), and the stalk \(\omcM_{X,\bar{x}}\) of \(\omcM_X\) at \(\bar{x}\to X\) is finitely generated.
    \item
    Assume that the log structure of \(X^{\log}\) is saturated.
    Then we shall say that \(X^{\log}\) is an \textbf{fs} log scheme if for any geometric point \(\bar{x} \to X\), \(X^{\log}\) has a chart at \(\bar{x}\), and the stalk \(\omcM_{X,\bar{x}}\) of \(\omcM_X\) at \(\bar{x}\to X\) is finitely generated.
  \end{enumerate}
\end{defi}

\begin{rem}\label{rem: str bc is str}
  If \(f^{\log}:Y^{\log}\to X^{\log}\) is a strict morphism of log schemes and \(g^{\log}:Z^{\log}\to X^{\log}\) is a morphism of log schemes, then one verifies immediately that the natural morphism \(Y^{\log}\times_{X^{\log}}Z^{\log}\to Z^{\log}\) is strict.
\end{rem}

\begin{rem}\label{rem: sat log scheme}
  Assume that the log structure of a log scheme \(X^{\log}\) is integral.
  Then one verifies immediately that
  the log structure of \(X^{\log}\) is saturated if and only if
  for any geometric point \(\bar{x}\to X\),
  \(\omcM_{X,\bar{x}}\) is a saturated monoid.
\end{rem}

\begin{defi}\label{defi: cat pt}
  Let \(X^{\log}\) be an fs log scheme.
  \begin{enumerate}
    \item
    We shall say that
    \(X^{\log}\) is an \textbf{fs log point}
    if the underlying scheme \(X\) of \(X^{\log}\) is isomorphic to the spectrum of a field.
    \item
    Assume that \(X^{\log}\) is an fs log point.
    We shall say that \(X^{\log}\) is a \textbf{split log point}
    if \(\omcM_X\) is a constant sheaf on the \'etale site of \(X\). 
  \end{enumerate}
\end{defi}

\begin{defi}\label{defi: log res fld}
  Let \(f^{\log}: Y^{\log}\to X^{\log}\) be a morphism of fs log schemes.
  Assume that \(Y^{\log}\) is an fs log point.
  Write \(y\in Y\) for the unique point of \(Y\).
  Then we shall say that \(f^{\log}\) is a \textbf{log residue field} of \(X^{\log}\)
  if \(f^{\log}\) is strict, and
  \(f:Y\to X\) is isomorphic as an \(X\)-scheme to the natural morphism \(\Spec(k(f(y))) \to X\) that arises from the spectrum of the residue field at \(f(y)\in X\).
\end{defi}

\begin{lem}\label{lem: morph split log pt}
  Let \(X^{\log}\) be an fs log point and
  \(\bar{x}\to X\) a geometric point.
  Write \(M\dfn \omcM_{X,\bar{x}}\).
  Let \(\varphi: M\to N\) be a local morphism between sharp fs monoids.
  Then there exists a morphism between fs log points
  \(f^{\log}:Z^{\log}\to X^{\log}\)
  such that the following conditions hold:
  \begin{enumerate}
    \item \label{enumi: fin sep lem: morph split log pt}
    \(f^{\#}: \Gamma(X,\OX)\to \Gamma(Z,\OZ)\) is
    a finite separable field extension in \(k(\bar{x})\).
    \item \label{enumi: split lem: morph split log pt}
    \(Z^{\log}\) is a split log point.
    \item \label{enumi: given morph lem: morph split log pt}
    For any geometric point \(\bar{z}\to Z\),
    there exists an isomorphism \(\psi:N\xrightarrow{\sim} \omcM_{Z,\bar{z}}\)
    such that the following diagram commutes: 
    \[\begin{tikzcd}
      M \ar[r,"\varphi"] \arrow[equal]{d} &
      N \ar[d,"\psi","{\rotatebox{90}{\(\sim\)}}"'] \\
      M \ar[r,"{\bar{f}^{\flat}_{\bar{z}}}"]& \omcM_{Z,\bar{z}}.
    \end{tikzcd}\]
  \end{enumerate}
\end{lem}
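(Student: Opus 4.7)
The plan is to realize $Z^{\log}$ as a new log structure built from $\varphi$ on the underlying scheme of a strict étale neighborhood of $\bar x$ in $X$: the neighborhood will supply the finite separable field extension required by \ref{enumi: fin sep lem: morph split log pt}, and a simple pre-log structure controlled by $\varphi$ will supply the monoid data required by \ref{enumi: split lem: morph split log pt} and \ref{enumi: given morph lem: morph split log pt}.

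First I would invoke the structure theory of fs log schemes to produce a strict étale neighborhood $i^{\log}: U^{\log} \to X^{\log}$ of $\bar x$ admitting a global chart $\theta: M \to \Gamma(U, \OU)$ whose induced map $M \to \omcM_{X, \bar x}$ is the identity on $M$. Since $X = \Spec(k)$ for some field $k$, this neighborhood has the form $U = \Spec(k_1)$ with $k \subset k_1 \subset k(\bar x)$ a finite separable extension. Because $k_1$ is a field and the natural map $M / \theta^{-1}(k_1^{\times}) = \omcM_{U, \bar x} \cong M$ is the identity, sharpness of $M$ forces $\theta^{-1}(k_1^{\times}) = 0$; hence $\theta(0) = 1$ and $\theta(m) = 0$ for every $m \in M \setminus \{0\}$.

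Next, set $Z \dfn \Spec(k_1)$, regarded as a $k$-scheme via $k \hookrightarrow k_1$, and equip it with the log structure associated to the pre-log structure $\beta: N \to k_1$ sending $0 \mapsto 1$ and $n \mapsto 0$ for each $n \in N \setminus \{0\}$. Sharpness of $N$ ensures $n_1 + n_2 \neq 0$ whenever $n_1, n_2 \neq 0$ (otherwise $n_1$ would lie in $N^{\times} = 0$), so $\beta$ is a well-defined monoid morphism; and $\beta^{-1}(k_1^{\times}) = 0$ makes $\omcM_Z$ the constant sheaf $N$ on the étale site of $Z$, so $Z^{\log}$ is a split fs log point, yielding \ref{enumi: split lem: morph split log pt}. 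Take $f: Z = U \to X$ to be the structure morphism of the étale neighborhood, giving \ref{enumi: fin sep lem: morph split log pt}, and define the morphism of log structures $f^{\flat}$ via the charts $\theta, \beta$ together with the monoid morphism $\varphi: M \to N$. The compatibility $\beta \circ \varphi = \theta$ is checked directly: for $m = 0$ both sides equal $1$; for $m \neq 0$, locality of $\varphi$ combined with sharpness of $N$ (so $N^{\times} = 0$) forces $\varphi(m) \neq 0$, whence $\beta(\varphi(m)) = 0 = \theta(m)$. At any geometric point $\bar z \to Z$, the canonical isomorphism $\psi: N \xrightarrow{\sim} \omcM_{Z, \bar z}$ induced by $\beta$, together with this compatibility, then yields the diagram required by \ref{enumi: given morph lem: morph split log pt}.

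The principal obstacle is the opening step: securing a strict étale neighborhood of $\bar x$ on which the log structure admits a chart whose chart monoid equals (rather than merely surjects to) $M = \omcM_{X, \bar x}$. This is a standard but nontrivial input from the structure theory of fs log schemes; once it is in place, the remainder of the construction amounts to a direct manipulation of pre-log structures on the spectrum of a field.
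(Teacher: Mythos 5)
Your proof is correct, and it reaches the same destination as the paper's---a split log point $Z^{\log}$ over a finite separable extension inside $k(\bar x)$, with log structure given by the unique local morphism $N\to \Gamma(Z,\OZ)$---but it gets the crucial finite separable extension by a different mechanism. The paper works directly with the Galois action: writing $G$ for the automorphism group of the separable closure $k_s\subset k(\bar x)$ of $k=\Gamma(X,\OX)$ and $H\subset G$ for the stabilizer acting trivially on $M=\omcM_{X,\bar x}$, it observes that $H$ has finite index because $M$ is finitely generated, takes $Z=\Spec(K)$ for $K$ the fixed field of $H$, and notes that $f^{-1}\omcM_X$ is then the constant sheaf $M_Z$, after which the morphism $f^{\flat}$ is assembled from $f^{\#,\times}\times\varphi$ exactly as in your construction. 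You instead invoke the existence of a \emph{neat} chart $\theta\colon M\to\Gamma(U,\OU)$ over a strict \'etale neighborhood $U=\Spec(k_1)$, i.e.\ a chart inducing the identity on $\omcM_{X,\bar x}$; this is a genuine standard fact (it uses that $M^{\gp}$ is torsion-free, which holds since $M$ is sharp and saturated), but it is strictly stronger than what the paper's own definition of an fs log scheme guarantees, and over a log point its proof essentially reduces to the Galois-descent argument the paper carries out by hand. So the paper's route is more self-contained and elementary, while yours outsources the trivialization of the Galois action on $M$ to chart-existence machinery; your verifications that $\theta^{-1}(k_1^{\times})=0$ by sharpness, that $\beta\circ\varphi=\theta$ by locality of $\varphi$ and sharpness of $N$, and that $\omcM_Z$ is the constant sheaf $N_Z$ are all sound and mirror the roles these hypotheses play in the paper's argument.
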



\begin{proof}
  Write \(k\dfn \Gamma(X,\OX)\),
  \(k_s\subset k(\bar{x})\) for the subfield consisting of the elements which are separable algebraic over \(k\),
  \(G\) for the automorphism group of the \(k\)-algebra \(k_s\), and
  \[
  H\dfn \left\{ \sigma \in G \ \middle| \ \text{\(\sigma\) acts trivially on \(M\)}\right\}.
  \]
  Then, since \(M\) is finitely generated,
  the subgroup \(H\subset G\) is of finite index.
  Write \(K \subset k_s\) for the fixed field of \(H\) and \(f:Z\to X\) for the morphism of schemes determined by the field extension \(K/k\).
  Then, since \(H\subset G\) is of finite index, \(f^{\#}: \Gamma(X,\OX)\to \Gamma(Z,\OZ)\) satisfies condition \ref{enumi: fin sep lem: morph split log pt}.

  Write
  \begin{itemize}
    \item \(\varphi_Z: M_Z\to N_Z\) for the morphism between constant \'etale sheaves of monoids on the \'etale site of \(Z\) determined by the morphism of monoids \(\varphi:M\to N\);
    \item \(\Gamma(Z,f^{-1}\omcM_X)_Z\) for the constant \'etale sheaf of monoids on the \'etale site of \(Z\) determined by the monoid \(\Gamma(Z,f^{-1}\omcM_X)\) (which is naturally isomorphic to \(M\));
    \item \(Z^{\log}\) for the split log point such that \(Z=\Spec(K)\), and the log structure of \(Z^{\log}\) is the morphism of sheaves of monoids \[\alpha_Z: \mcM_Z\dfn \mcO_Z^{\times}\times N_Z\to \mcO_Z\] on the \'etale site of \(Z\) determined by the natural inclusion \(\mcO_Z^{\times} \hookrightarrow \mcO_Z\) and the unique local morphism of monoids \(N\to K\), where we regard \(K\) as a commutative monoid by the multiplication operation, and we recall that \(N\) is assumed to be sharp.
  \end{itemize}
  Then, since the log structure of \(Z^{\log}\) arises from the unique local morphism of monoids \(N\to K\), 
  \(Z^{\log}\) satisfies condition \ref{enumi: split lem: morph split log pt}.

  Since \(H\) acts trivially on \(M\), the natural morphisms \(f^{-1}\omcM_X \xleftarrow{\sim} \Gamma(Z,f^{-1}\omcM_X)_Z \xrightarrow{\sim} M_Z\) of sheaves of monoids on the \'etale site of \(Z\) are isomorphisms.
  Hence there exists an isomorphism \(f^{-1}\mcO_X^{\times} \times M_Z\xrightarrow{\sim} f^{-1}\mcM_X\).
  Since \(\varphi:M\to N\) is local, the morphism of sheaves of monoids \(f^{\flat}: f^{-1}\mcM_X\to \mcM_Z\) determined by \(f^{-1}\mcO_X^{\times} \times M_Z\xrightarrow{\sim} f^{-1}\mcM_X\) and \(f^{\#,\times}\times \varphi_Z: f^{-1}\mcO_X^{\times} \times M_Z\to \mcO_Z^{\times}\times N_Z\) satisfies \(\alpha_Z \circ f^{\flat} = f^{\#}\circ f^{-1}(\alpha_X)\).
  Thus the pair \((f,f^{\flat})\) is a morphism of log schemes \(f^{\log}: Z^{\log}\to X^{\log}\).
  Then \(f^{\log}\) satisfies condition \ref{enumi: given morph lem: morph split log pt}.
  This completes the proof of \autoref{lem: morph split log pt}.
\end{proof}

Next, we discuss some basic properties of push-outs
in the category of fs log schemes.

\begin{lem}\label{lem: small et exists}
  Let \(i:Y\hookrightarrow X\) be a closed immersion of schemes,
  \(\bar{y}\to Y\) a geometric point of \(Y\), and
  \(g:V\to Y\) an \'{e}tale neighborhood of \(\bar{y}\to Y\).
  Then there exist an \'{e}tale neighborhood \(f:U\to X\)
  of the geometric point \(\bar{y}\to Y\to X\) of \(X\) and
  a morphism \(h:U\times_XY\to V\) of \(Y\)-schemes such that
  the following diagram commutes and indeed is cartesian:
  \[\begin{tikzcd}
      U\times_X Y \ar[r,"h"]\ar[d, hookrightarrow] &
      V \ar[r,"g"] &
      Y\ar[d, hookrightarrow,"i"] \\
      U \ar[rr,"f"] & & X.
  \end{tikzcd}\]
\end{lem}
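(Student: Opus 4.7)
The plan is the standard local lifting of étale morphisms along a closed immersion. Since the conclusion is Zariski-local on \(X\) around the image \(x\) of \(\bar{y}\), I first replace \(X\) by an affine open neighborhood \(\Spec(A)\) of \(x\), so that \(Y\cap\Spec(A) = \Spec(A/I)\) for some ideal \(I\subset A\). Because the lemma only requires the existence of \emph{some} morphism \(h:U\times_X Y\to V\), and because \(f:U\to X\) need only be étale in some open neighborhood of the image of \(\bar{y}\) (not globally), I am free to replace \(V\) by a smaller étale neighborhood of \(\bar{y}\) in \(Y\). Using the classical fact that any étale morphism is étale-locally on the source a standard étale morphism, I may therefore assume
\[
V \;=\; \Spec\bigl(((A/I)[T]/\bar{p})_{\bar{q}}\bigr),
\]
for some \(\bar{p},\bar{q}\in (A/I)[T]\) with \(\bar{p}'\) a unit in \(((A/I)[T]/\bar{p})_{\bar{q}}\).

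Next, I would choose arbitrary lifts \(p,q\in A[T]\) of \(\bar{p},\bar{q}\), and set
\[
U \;\dfn\; \Spec\bigl((A[T]/p)_q\bigr),
\]
equipped with the evident structure morphism \(f:U\to X\). By construction the base change \(U\times_X Y\) is canonically identified with \(V\), yielding the desired morphism \(h\) (which is an isomorphism onto an open subscheme after the final shrinking below). Applying Nakayama's lemma to the local ring of \(U\) at the image of \(\bar{y}\) — noting that the residue of \(p'\) modulo \(I\) is a unit at that point — one finds that \(p'\) itself is a unit at that point, so \(f\) is étale at the image of \(\bar{y}\). Since the étale locus is open in \(U\), I may then replace \(U\) by a Zariski open neighborhood of the image of \(\bar{y}\) so that \(f\) becomes étale on all of \(U\); the new \(U\times_X Y\) is then an open subscheme of \(V\) still containing \(\bar{y}\), which supplies the required morphism \(h\).

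Finally, the outer rectangle of the displayed diagram is cartesian by the very definition of the fibre product \(U\times_X Y\), so it only remains to verify commutativity, i.e.\ that \(g\circ h\) coincides with the natural projection \(U\times_X Y\to Y\); this is built into the construction of \(h\). The one substantive step is the local lifting of the standard étale presentation, which relies on nothing beyond Nakayama's lemma together with openness of the étale locus. This is entirely classical, and I do not anticipate any genuine obstacle beyond organizing these standard ingredients.
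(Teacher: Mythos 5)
Your argument is correct, but it takes a genuinely different route from the paper. The paper's proof is a two-line reduction: it observes that \(\mathcal{O}_{X,\bar{x}}/\mathcal{I}_{\bar{x}}\to \mathcal{O}_{Y,\bar{y}}\) is an isomorphism of strictly henselian local rings \cite[\href{https://stacks.math.columbia.edu/tag/05WS}{Tag 05WS}]{stacks-project} and then invokes \cite[\href{https://stacks.math.columbia.edu/tag/04GW}{Tag 04GW}]{stacks-project}, i.e., it works at the level of strict henselizations as filtered colimits over \'etale neighborhoods. You instead give the explicit construction underlying those citations: reduce to \(X=\Spec(A)\), \(Y=\Spec(A/I)\), present \(V\) (after shrinking) as a standard \'etale algebra \(((A/I)[T]/\bar{p})_{\bar{q}}\), lift \(p,q\) to \(A[T]\), and pass to the locus where \(p'\) is invertible. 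This is self-contained and makes visible why the lifted chart restricts to (an open piece of) \(V\), at the cost of the routine bookkeeping of Zariski localizations --- in particular you should note that any open neighborhood of \(y\) in \(Y\) contains one of the form \(D(\bar{a})=D(a)\cap Y\), so the shrinking of \(Y\) needed for the standard \'etale presentation can indeed be achieved by shrinking \(X\). Two cosmetic points: \'etale morphisms are \emph{Zariski}-locally on the source standard \'etale (which is what you actually use), and the invertibility of \(p'\) at the image of \(\bar{y}\) follows simply because its residue \(\bar{p}'\) is nonzero in the residue field of a local ring --- no appeal to Nakayama's lemma is needed. Neither point affects the validity of the argument.
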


\begin{proof}
  Write \(\bar{x}\to X\) for the composite \(\bar{y}\to Y\to X\) and \(\mcI\dfn \ker(i^{\#}:\mathcal{O}_X\to i_*\mathcal{O}_Y)\).
  Then, by \cite[\href{https://stacks.math.columbia.edu/tag/05WS}{Tag 05WS}]{stacks-project},
  the morphism \(\mathcal{O}_{X,\bar{x}}/\mcI_{\bar{x}} \xrightarrow{\sim} \mcO_{Y,\bar{y}}\)
  induced by \(i^{\#}\) is an isomorphism.
  Thus \autoref{lem: small et exists} follows immediately from \cite[\href{https://stacks.math.columbia.edu/tag/04GW}{Tag 04GW}]{stacks-project}.
\end{proof}


\begin{lem}\label{coprod log sch}
  Let \(X^{\log} \xleftarrow{s^{\log}} Z^{\log} \xrightarrow{t^{\log}} Y^{\log}\)
  be morphisms of fs log schemes.
  Assume that the following conditions hold:
  \begin{itemize}
    \item \(s\) and \(t\) are closed immersions.
    \item Either \(s^{\log}\) or \(t^{\log}\) is strict. %
  \end{itemize}
  Write \(W\dfn X\amalg_ZY\) for the push-out of schemes
  (where we observe that the existence of \(W\) follows from
  \cite[\href{https://stacks.math.columbia.edu/tag/0E25}{Tag 0E25}]{stacks-project}),
  \(p:X\to W\) and \(q:Y\to W\) for the natural morphisms,
  \(r\dfn p\circ s = q\circ t\),
  \(\mcM_W \dfn p_*\mcM_X \times_{r_*\mcM_Z} q_*\mcM_Y\)
  (an \'{e}tale sheaf of monoids on \(W\)), and
  \(\alpha_W: \mcM_W\to p_*\mcO_X \times_{r_*\mcO_Z} q_*\mcO_Y \cong \mcO_W\)
  (cf. \cite[\href{https://stacks.math.columbia.edu/tag/0E25}{Tag 0E25}]{stacks-project}):
  \[
    \begin{tikzcd}
      Z \ar[r, hookrightarrow, "s"] \ar[rd, hookrightarrow, "r"] \ar[d, hookrightarrow, "t"] &
      X \ar[d, hookrightarrow, "p"] \\
      Y \ar[r, hookrightarrow, "q"] & W.
    \end{tikzcd}
  \]
  Then the following assertions hold:
  \begin{enumerate}
    \item \label{coprod log sch exists}
    The triple \(W^{\log} \dfn (W,\mcM_W, \alpha_W)\) is an fs log scheme.
    \item \label{coprod log sch push-out}
    \(W^{\log}\) represents the push-out of the diagram
    \(X^{\log} \xhookleftarrow{s^{\log}} Z^{\log} \xhookrightarrow{t^{\log}} Y^{\log}\)
    in the category of fs log schemes.
  \end{enumerate}
\end{lem}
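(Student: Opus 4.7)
The plan is to verify the two assertions by working étale-locally on \(W\), after reducing to the case in which \(t^{\log}\) is strict (the case where \(s^{\log}\) is strict is symmetric). Throughout, I will exploit two observations from \autoref{rem: int and sat monoids}: finite limits of integral monoids are integral, and finite limits of saturated monoids are saturated.

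First, I would verify assertion \ref{coprod log sch exists}. The only nontrivial points are geometric points \(\bar{w}\) of \(r(Z)\) --- away from \(r(Z)\), the sheaves \(\mcM_W, \mcO_W\) locally coincide with \(\mcM_X, \mcO_X\) (or \(\mcM_Y, \mcO_Y\)), so the log-structure axiom and the fs property are inherited. At a point of \(r(Z)\), the stalk is the fiber product \(\mcM_{W,\bar{w}} = \mcM_{X,\bar{w}} \times_{\mcM_{Z,\bar{w}}} \mcM_{Y,\bar{w}}\), and similarly for \(\mcO_W\). I would check that a pair \((a,b) \in \mcO_{X,\bar{w}} \times_{\mcO_{Z,\bar{w}}} \mcO_{Y,\bar{w}}\) is a unit if and only if both components are (elementary: inverses automatically match in \(\mcO_{Z,\bar{w}}\) since the originals do); this reduces the log-structure axiom for \(\alpha_W\) to those for \(\alpha_X\) and \(\alpha_Y\). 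Integrality and saturation of \(\mcM_{W,\bar{w}}\) are immediate from the monoid remark above.

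The remaining issue for \ref{coprod log sch exists} is that \(\omcM_{W,\bar{w}}\) must be finitely generated, which is the step where strictness of \(t^{\log}\) is essential. Given a geometric point \(\bar{z}\to Z\), I would choose a chart \((V\to Y^{\log}, M\to \Gamma(V,\mcO_V))\) of \(Y^{\log}\) at \(\bar{y} \dfn t(\bar{z})\) with \(M \xrightarrow{\sim} \omcM_{Y,\bar{y}}\). By strictness of \(t^{\log}\), this restricts to a chart of \(Z^{\log}\) on \(V\times_Y Z\). Invoking \autoref{lem: small et exists}, I can then produce a strict étale neighborhood \(U\to X\) of \(\bar{z}\) extending \(V\times_Y Z \to Z\hookrightarrow X\), and after possibly enlarging a chart of \(X^{\log}\) on \(U\) by adjoining the image of \(M\), obtain a chart \(N\to \Gamma(U,\mcO_U)\) of \(X^{\log}\) whose pullback to \(U\times_X Z\) factors naturally through \(M\). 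Then by the scheme-level pushout description of \cite[\href{https://stacks.math.columbia.edu/tag/0E25}{Tag 0E25}]{stacks-project}, the monoid \(Q \dfn N\amalg_M M \cong N\) (or more precisely, a suitable chart built from the compatible pair) maps to \(\Gamma(U\amalg_{U\times_X Z}(V\times_Y U), \mcO)\) and gives a chart of the pushout log structure \(W^{\log}\) near \(r(\bar{z})\). Finite generation of the sharpification \(\omcM_{W,\bar{w}}\) then follows from finite generation of \(\omcM_{X,\bar{z}}\) and \(\omcM_{Y,\bar{y}}\).

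For assertion \ref{coprod log sch push-out}, given any fs log scheme \(T^{\log}\) and morphisms \(g_X^{\log}: X^{\log}\to T^{\log}, g_Y^{\log}: Y^{\log}\to T^{\log}\) with \(g_X^{\log}\circ s^{\log} = g_Y^{\log}\circ t^{\log}\), the underlying morphisms \(g_X, g_Y\) factor uniquely through a morphism \(g:W\to T\) by the scheme pushout property. Adjunction converts the flats \(g_X^{\flat}, g_Y^{\flat}\) into maps \(g^{-1}\mcM_T \to p_*\mcM_X\) and \(g^{-1}\mcM_T \to q_*\mcM_Y\) that agree after composing to \(r_*\mcM_Z\) (by hypothesis on \(g_X^{\log}, g_Y^{\log}\)), and the universal property of the fiber product \(\mcM_W\) yields a unique lift \(g^{\flat}:g^{-1}\mcM_T\to\mcM_W\) compatible with \(\alpha_W\). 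Uniqueness of the resulting morphism \(W^{\log}\to T^{\log}\) follows from the uniqueness on the scheme level combined with the uniqueness of the lift. The principal obstacle throughout is Step 3 --- arranging the chart of \(X^{\log}\) to be compatible with the pulled-back chart of \(Y^{\log}\) on \(Z\), since without strictness of \(t^{\log}\) no such chart of \(Z^{\log}\) would be readily available from one side.
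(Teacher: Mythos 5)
Your argument follows the paper's proof essentially step for step: away from \(r(Z)\) the log structure is inherited from \(X^{\log}\) or \(Y^{\log}\), integrality/saturation come from \autoref{rem: int and sat monoids}, and at points of \(r(Z)\) one uses \autoref{lem: small et exists} together with Kato's chart lemmas to produce compatible fs charts on \(X\), \(Y\), \(Z\) with the strict leg inducing an isomorphism of charts, after which the push-out property is immediate from the fibre-product definition of \(\mcM_W\). The only blemish is your notation \(N\amalg_M M\) for the chart of \(W^{\log}\): since \(\mcM_W\) is defined as a fibre product of direct images, the chart is \(N\times_M M\), which is indeed \(\cong N\) when \(t^{\log}\) is strict, exactly as you conclude.
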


\begin{proof}
  It follows immediately from the definition of \(W^{\log}\) that assertion \ref{coprod log sch push-out} follows from assertion \ref{coprod log sch exists}.
  In the remainder of the proof of \autoref{coprod log sch}, we prove assertion \ref{coprod log sch exists}.
  Since the functor \((-)^{\times}:\mathsf{Mon}\to \mathsf{Ab}\) preserves limits, it follows immediately that \(W^{\log}\) is a log scheme.
  Moreover, it follows immediately from \autoref{rem: int and sat monoids} \ref{enumi: sat rem: int and sat monoids} and the definition of the notion of a saturated log structure that the log structure of \(W^{\log}\) is saturated.
  Hence, to prove \autoref{coprod log sch},
  it suffices to prove that
  for any geometric point \(\bar{w}\to W\),
  \(W^{\log}\) has a chart \((U_0^{\log}\to W^{\log}, M_0 \to \Gamma(U_0,\mcO_{U_0}))\) at \(\bar{w}\) such that \(M_0\) is an fs monoid.
  Let \(\bar{w} \to W\) be a geometric point.

  Write \(U^{\log}\) for the fs log scheme
  determined by the open subset \(|X| \setminus \im(s)\),
  \(V^{\log}\) for the fs log scheme
  determined by the open subset \(|Y| \setminus \im(t)\), and
  \(f^{\log}: (U\sqcup V)^{\log}\to W^{\log}\)
  for the natural morphism of log schemes
  determined by \(p^{\log}\) and \(q^{\log}\).
  Then \(f^{\log}\) is a strict open immersion, and
  \(\im(f) = (\im(p)\setminus \im(q))\cup(\im(q)\setminus\im(p))\).
  Hence if \(\im(\bar{w}\to W) \subset (\im(p)\setminus \im(q))\cup(\im(q)\setminus\im(p))\),
  then \(W^{\log}\) has a chart at \(\bar{w}\) of the desired type.


  Assume that \(\im(\bar{w}\to W)\subset \im(r)\).
  Then the geometric point \(\bar{w}\to W\) arises from
  a geometric point \(\bar{w}\to Z\).
  Since \(s^{\log}\) and \(t^{\log}\) are morphisms of fs log schemes,
  it follows from \autoref{lem: small et exists} and
  \cite[Definition 2.9 (2) and Lemma 2.10]{Kato} that
  there exist a chart
  \((U_X^{\log}\to X^{\log}, M_X\to \Gamma(U_X, \OO{U_X}))\) at \(\bar{w}\to X\), a chart \((U_Y^{\log}\to Y^{\log}, M_Y\to \Gamma(U_Y, \OO{U_Y}))\) at \(\bar{w}\to Y\), a chart \((U_Z^{\log}\to Z^{\log}, M_Z\to \Gamma(U_Z, \OO{U_Z}))\) at \(\bar{w}\to Z\), a morphism of monoids \({\tilde{s}}^{\flat}: M_X\to M_Z\), and a morphism of monoids \({\tilde{t}}^{\flat}: M_Y\to M_Z\) such that \(M_X\), \(M_Y\), \(M_Z\) are fs monoids, and, moreover,
  for each \(u\in \{s,t\}\), if \(u^{\log}\) is strict,
  then \({\tilde{u}}^{\flat}\) is an isomorphism.
  By \autoref{lem: small et exists},
  there exists an \'{e}tale neighborhood \(U_W\to W\) of \(\bar{w}\to W\) such that
  for each \((*,?)\in \{(X,p),(Y,q),(Z,r)\}\),
  there exists a morphism of \(*\)-schemes \(U_W\times_W * \to U_*\) such that
  the following diagram commutes and indeed is cartesian:
  \[
    \begin{tikzcd}
      U_W\times_W * \ar[r, ""]\ar[d,hookrightarrow] &
      U_* \ar[r, "\text{\'{e}t}"] &
      * \ar[d, hookrightarrow, "?"] \\
      U_W \ar[rr, "\text{\'{e}t}"] & & W.
    \end{tikzcd}
  \]
  Write \(M_W\dfn M_X\times_{M_Z} M_Y\).
  Since either \({\tilde{s}}^{\flat}: M_X\to M_Z\) or
  \({\tilde{t}}^{\flat}: M_Y\to M_Z\) is an isomorphism,
  \(M_W\) is an fs monoid.
  Thus \(U_W\to W\) and the natural morphism
  \(M_W\to \Gamma(U_W,\OO{U_W})\) determine a chart of \(W^{\log}\) at \(\bar{w}\) of the desired type.
  This completes the proof of \autoref{coprod log sch}.
\end{proof}

\begin{cor}\label{cor: coprod exists}
  Let \(S^{\log}\) be an fs log scheme and \(X^{\log} \xhookleftarrow{s^{\log}} Z^{\log} \xhookrightarrow{t^{\log}} Y^{\log}\) strict closed immersions in \(\LSchb{S}\).
  Write \(W^{\log} \dfn X^{\log}\amalg_{Z^{\log}}Y^{\log}\) (cf. \autoref{coprod log sch}).
  Then the following assertions hold.
  \begin{enumerate}
    \item \label{enumi: W in LSchb cor: coprod exists}
    If \(\bbullet \subset \rqqs\), then
    \(W^{\log}\) belongs to the full subcategory \(\LSchb{S}\subset \LSch_{/S^{\log}}\),
    i.e., the push-out of \(s^{\log},t^{\log}\) exists in \(\LSchb{S}\).
    \item \label{enumi: ft cor: coprod exists}
    Assume that \(S^{\log}\) is locally Noetherian.
    If \(\bbullet \subset \{\mathrm{red,qcpt,qsep,sep,ft}\}\),
    then \(W^{\log}\) belongs to the full subcategory \(\LSchb{S}\subset \LSch_{/S^{\log}}\),
    i.e., the push-out of \(s^{\log},t^{\log}\) exists in \(\LSchb{S}\).
  \end{enumerate}
\end{cor}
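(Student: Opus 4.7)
The plan is to observe that by \autoref{coprod log sch}, $W^{\log}$ is already an fs log scheme representing the push-out of $s^{\log}, t^{\log}$ in $\LSch_{/S^{\log}}$, with underlying scheme $W = X \amalg_Z Y$ and structure sheaf $\mcO_W \cong p_*\mcO_X \times_{r_*\mcO_Z} q_*\mcO_Y$. Since membership in $\LSchb{S}$ depends only on whether the underlying structure morphism $W \to S$ satisfies each property in $\bbullet$, it suffices to verify that each such property is inherited by $W \to S$ from $X \to S$ and $Y \to S$. I would dispatch these one at a time.

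For \textbf{red}, a fibred product of reduced rings embedded in a common ring is reduced, hence $\mcO_W$ has no nonzero nilpotents, so $W$ is reduced. For \textbf{qcpt over $S$}, the preimage in $W$ of a quasi-compact open $U \subset S$ equals the union of the images under $p$ and $q$ of the preimages of $U$ in $X$ and $Y$, each of which is quasi-compact by hypothesis; a finite union of quasi-compact subsets is quasi-compact. For \textbf{qsep} and \textbf{sep over $S$}, I would analyse the diagonal morphism $\Delta_W : W \to W \times_S W$: since $|W|$ is covered by $\im p$ and $\im q$, the image of $\Delta_W$ is contained in the union of $(p \times_S p)(X \times_S X)$ and $(q \times_S q)(Y \times_S Y)$, and on these pieces $\Delta_W$ restricts respectively to (compositions involving closed immersions with) $\Delta_{X/S}$ and $\Delta_{Y/S}$, which are qcpt, respectively closed immersions, by assumption. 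Since being qcpt or a closed immersion can be checked on such a cover of the target, $\Delta_W$ inherits the property. These cases together yield assertion \ref{enumi: W in LSchb cor: coprod exists}.

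The main obstacle is the \textbf{ft} case in assertion \ref{enumi: ft cor: coprod exists}, which is precisely why the locally Noetherian hypothesis on $S^{\log}$ is imposed. The key affine-local ingredient is the classical statement: if $R$ is a Noetherian ring, $A, B$ are finitely generated $R$-algebras, and $A \twoheadrightarrow C \twoheadleftarrow B$ are surjections of $R$-algebras (mirroring our closed-immersion hypothesis), then the fibred product $A \times_C B$ is again a finitely generated $R$-algebra. This is a classical result of Ferrand, provable by lifting a chosen finite set of generators of $A$ and $B$ compatibly via the surjections to $C$, and then exploiting that the kernel of the projection $A \times_C B \to A$ (an ideal isomorphic to $\ker(B \to C)$) is finitely generated over the Noetherian ring $R$. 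Applying this on an affine-open cover of $S$ together with compatible affine-open covers of $X, Y, Z$ adapted to the description in \autoref{coprod log sch}, one concludes that $W \to S$ is locally of finite type; combined with the qcpt case handled above, $W \to S$ is of finite type. This yields assertion \ref{enumi: ft cor: coprod exists}.
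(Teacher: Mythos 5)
Your proof is correct and follows essentially the same route as the paper: identify \(W^{\log}\) with the scheme-level push-out \(X\amalg_Z Y\) via \autoref{coprod log sch} and verify each property in \(\bbullet\) separately, using reducedness of fibre products of reduced rings and the Ferrand-type finiteness of \(A\times_C B\) in the locally Noetherian case. The only difference is that the paper delegates the quasi-compactness, (quasi-)separatedness, and finite-type verifications to citations of the Stacks Project (Tags 0E26, 04ZD, 0E27), whereas you prove them by hand; your arguments are sound modulo two harmless imprecisions, namely that the images of \(X\times_S X\) and \(Y\times_S Y\) cover only the image of the diagonal rather than all of \(W\times_S W\) (which suffices, since \(\Delta_W(|W|)\) is then a union of two closed subsets), and that \(\ker(B\to C)\) is a finitely generated ideal of the Noetherian ring \(B\) rather than a finitely generated \(R\)-module.
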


\begin{proof}
  First, we note that
  by \autoref{coprod log sch} \ref{coprod log sch exists} \ref{coprod log sch push-out},
  it holds that \(W\cong X\amalg_Z Y\).
  Hence assertion \ref{enumi: W in LSchb cor: coprod exists} 
  follows immediately from \cite[\href{https://stacks.math.columbia.edu/tag/0E26}{Tag 0E26}]{stacks-project}, \cite[\href{https://stacks.math.columbia.edu/tag/04ZD}{Tag 04ZD}]{stacks-project}, and the fact that the fiber product of reduced rings is reduced.
  Assertion \ref{enumi: ft cor: coprod exists} follows immediately from assertion \ref{enumi: W in LSchb cor: coprod exists} and
  \cite[\href{https://stacks.math.columbia.edu/tag/0E27}{Tag 0E27}]{stacks-project}.
  This completes the proof of \autoref{cor: coprod exists}.
\end{proof}

\section{fs Log Points}
\label{section: Fs points}

In this section, we assume that \[\bbullet\subset \{\mathrm{red,qcpt,qsep,sep,ft}\}.\]

In the present \autoref{section: Fs points},
we give a category-theoretic characterization of the objects of \(\LSchb{S}\) whose underlying log scheme is an fs log point for locally Notherian fs log schemes \(S^{\log}\) (cf. \autoref{prop: fs log pt}).

First, we note the following lemma:

\begin{lem}\label{lem: empty and conn}
  Let \(S^{\log}\) be an fs log scheme and \(X^{\log}\) an object of \(\LSchb{S}\).
  Then the following assertions hold:
  \begin{enumerate}
    \item \label{enumi: empty lem: empty and conn}
    \(X\neq \emptyset\) if and only if
    \(X^{\log}\) is not an initial object of \(\LSchb{S}\).
    In particular,
    the property that \(X\neq \emptyset\)
    may be characterized category-theoretically
    from the data \((\LSchb{S},X^{\log})\).
    \item \label{enumi: conn lem: empty and conn}
    \(|X|\) is connected if and only if \(X\neq \emptyset\), and, moreover, \(X^{\log}\) does not admit a representation as a coproduct of two non-initial objects of \(\LSchb{S}\).
    In particular,
    the property that \(|X|\) is connected
    may be characterized category-theoretically
    from the data \((\LSchb{S},X^{\log})\).
  \end{enumerate}
\end{lem}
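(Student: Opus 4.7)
The plan is to characterize emptiness via initiality of the empty log scheme $\emptyset\in \LSchb{S}$, and disconnectedness via coproduct decompositions. For \ref{enumi: empty lem: empty and conn}, first note that $\emptyset$ lies in $\LSchb{S}$ (every property in $\bbullet$ is satisfied vacuously) and is initial there, since for any $Y^{\log}\in \LSchb{S}$ the unique morphism of fs log schemes $\emptyset\to Y^{\log}$ lies in $\LSchb{S}$. The assertion then follows by the uniqueness of initial objects up to isomorphism: $X=\emptyset$ if and only if $X^{\log}\cong \emptyset$ if and only if $X^{\log}$ is initial.

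For \ref{enumi: conn lem: empty and conn}, the main preliminary observation is that for $A^{\log},B^{\log}\in \LSchb{S}$ the disjoint union $A^{\log}\amalg B^{\log}$ taken in $\LSch_{/S^{\log}}$ again lies in $\LSchb{S}$, and hence --- since $\LSchb{S}\subset \LSch_{/S^{\log}}$ is a full subcategory --- represents the coproduct in $\LSchb{S}$ as well. This reduces to the elementary verification that each of the properties ``reduced'', ``quasi-compact over $S^{\log}$'', ``quasi-separated over $S^{\log}$'', ``separated over $S^{\log}$'', and ``of finite type over $S^{\log}$'' is preserved under finite disjoint unions. Consequently, any coproduct of $A^{\log},B^{\log}$ in $\LSchb{S}$ is canonically isomorphic to the disjoint union, so a coproduct decomposition $X^{\log}\cong A^{\log}\amalg B^{\log}$ in $\LSchb{S}$ forces $X\cong A\amalg B$ on underlying schemes.

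With this in hand, the forward direction of \ref{enumi: conn lem: empty and conn} is proved by writing $|X|=|U|\sqcup|V|$ with $U,V$ nonempty clopen and restricting the log structure of $X^{\log}$ along the clopen immersions to obtain $U^{\log},V^{\log}\in \LSchb{S}$ (the property ``ft'' passing to clopen subschemes since these are automatically quasi-compact in the ambient space), so that $X^{\log}\cong U^{\log}\amalg V^{\log}$ exhibits the desired decomposition into non-initial objects, non-initiality being given by \ref{enumi: empty lem: empty and conn}. The reverse direction is an immediate consequence of the preliminary observation combined with \ref{enumi: empty lem: empty and conn}.

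Finally, both characterizations are phrased entirely in terms of initial objects and coproducts of non-initial objects, and are therefore preserved by any equivalence of categories $F:\LSchb{S}\xrightarrow{\sim}\LSchc{T}$, yielding the ``category-theoretic characterization'' statements. I do not expect any substantive obstacle; the only mildly technical point is the agreement between coproducts in the subcategory $\LSchb{S}$ and disjoint unions in the ambient category $\LSch_{/S^{\log}}$, and this rests only on the stability of each property in $\bbullet$ under formation of finite disjoint unions.
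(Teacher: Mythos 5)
Your proposal is correct and fills in exactly the elementary argument that the paper leaves implicit (its proof of this lemma is simply the assertion that both parts "follow immediately from elementary log scheme theory"): emptiness is detected by initiality of the empty log scheme, and disconnectedness by coproduct decompositions, with the key verification that each property in \(\bbullet\) is stable under clopen immersions and finite disjoint unions so that coproducts in \(\LSchb{S}\) agree with disjoint unions in \(\LSch_{/S^{\log}}\). No gaps; this is the intended argument.
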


\begin{proof}
  Assertions \ref{enumi: empty lem: empty and conn}
  \ref{enumi: conn lem: empty and conn}
  follow immediately from elementary log scheme theory.
\end{proof}


Next, we give a category-theoretic characterization of fs log points
(cf. \autoref{prop: fs log pt}).

\begin{lem}\label{lem: epi from X X 1}
  Let \(S^{\log}\) be a locally Noetherian fs log scheme;
  \(i_1^{\log},i_2^{\log}:X^{\log} \to Y^{\log}\) morphisms in \(\LSchb{S}\);
  \(p^{\log}: Y^{\log}\to X^{\log}\) a morphism in \(\LSchb{S}\).
  Assume that the following conditions hold:
  \begin{enumerate}
    \item \label{enumi: X fs log pt epi from X X}
    \(X^{\log}\) is an fs log point.
    \item \label{enumi: id epi from X X}
    \(p^{\log}\circ i_1^{\log} = p^{\log}\circ i_2^{\log} = \id_{X^{\log}}\).
    \item \label{enumi: epi epi from X X}
    The morphism \(i^{\log}: X^{\log}\amalg X^{\log} \to Y^{\log}\) determined by \(i_1^{\log}\) and \(i_2^{\log}\) is an epimorphism in \(\LSchb{S}\).
    \item \label{enumi: connected epi from X X}
    \(|Y|\) is connected.
  \end{enumerate}
  Then \(i_1\), \(i_2\), \(p\) are isomorphisms, and \(i_1=i_2=p^{-1}\).
\end{lem}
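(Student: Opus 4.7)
The plan is to reduce everything to the identity $i_1^{\log} = i_2^{\log}$ and then to establish this identity by combining the epimorphism hypothesis with the connectedness of $|Y|$ and the structure of the fs log point $X^{\log}$.

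For the formal reduction, suppose we have shown $i_1^{\log} = i_2^{\log} =: \iota^{\log}$. Set $g_1^{\log} := \id_{Y^{\log}}$ and $g_2^{\log} := \iota^{\log}\circ p^{\log}$. For each $k \in \{1,2\}$, the relations $p^{\log}\circ i_k^{\log} = \id_{X^{\log}}$ (condition \ref{enumi: id epi from X X}) and $i_k^{\log} = \iota^{\log}$ give $g_2^{\log}\circ i_k^{\log} = \iota^{\log} = g_1^{\log}\circ i_k^{\log}$, hence $g_1^{\log}\circ i^{\log} = g_2^{\log}\circ i^{\log}$. Condition \ref{enumi: epi epi from X X} then forces $g_1^{\log} = g_2^{\log}$, i.e., $\iota^{\log}\circ p^{\log} = \id_{Y^{\log}}$; together with $p^{\log}\circ \iota^{\log} = \id_{X^{\log}}$ this shows that $\iota^{\log}$ and $p^{\log}$ are mutually inverse isomorphisms.

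For the main identification $i_1^{\log} = i_2^{\log}$: since $X^{\log}$ is an fs log point, $X = \Spec(K)$ for some field $K$, and each $i_k^{\log}$ is determined by the point $y_k := i_k(x) \in |Y|$ together with maps on residue fields and log-stalks at $y_k$. The retract relation $p^{\log}\circ i_k^{\log} = \id$ forces the canonical map $K = k(x) \to k(y_k)$ induced by $p^{\#}$ to be an isomorphism, with the residue-field component of $i_k^{\#}$ being its inverse. It thus suffices to verify (a) $y_1 = y_2$ in $|Y|$ and (b) $i_1^{\flat} = i_2^{\flat}$. For (a) the natural approach is to use \autoref{cor: coprod exists} to build a test object $Z^{\log} \in \LSchb{S}$---for instance, a push-out obtained by gluing $Y^{\log}$ to itself along strict closed immersions derived from $i_1^{\log}$ and $i_2^{\log}$, possibly after restricting $Y^{\log}$ to an appropriate closed sub-log-scheme containing $y_1$ and $y_2$---and to exhibit two distinct morphisms $Y^{\log} \to Z^{\log}$ agreeing after composition with $i^{\log}$; condition \ref{enumi: epi epi from X X} then yields a contradiction unless $y_1 = y_2$. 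The connectedness condition \ref{enumi: connected epi from X X} enters at this point to rule out test morphisms that factor trivially through disconnected components. For (b), given $y_1 = y_2 =: y$, I would invoke \autoref{lem: morph split log pt} to produce test morphisms from split log points into $Y^{\log}$ at $y$ and use the epi hypothesis to force $i_1^{\flat} = i_2^{\flat}$.

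The main obstacle lies in part (a): making precise how the epi and connectedness hypotheses together with \autoref{cor: coprod exists} preclude $y_1 \neq y_2$. In particular, one must either verify directly that $i_1^{\log}$ and $i_2^{\log}$ are strict closed immersions or perform a preliminary reduction to arrange this, and then explicitly exhibit a pair of morphisms out of $Y^{\log}$ whose forced equality (under the epi hypothesis) is inconsistent with $y_1 \neq y_2$.
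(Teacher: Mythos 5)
There is a genuine gap, and it begins with the overall architecture. Your plan is to prove the identity \(i_1^{\log} = i_2^{\log}\) of \emph{morphisms of log schemes} (including the \(\flat\)-parts) and then deduce the conclusion formally. But the conclusion of \autoref{lem: epi from X X 1} concerns only the \emph{underlying scheme morphisms} \(i_1, i_2, p\), and the stronger log-level identity is neither needed nor, under hypotheses (i)--(iv) alone, expected to hold: in the sufficiency half of \autoref{prop: fs log pt} one works precisely with configurations satisfying (i)--(iv) in which \(\bar{i}_1^{\flat}\) and \(\bar{i}_2^{\flat}\) are non-injective retractions of \(\bar{p}^{\flat}\) (so that \(i_1^{\log}, i_2^{\log}\) are not isomorphisms of log schemes, and need not coincide), and these are excluded only by the additional condition (v) there. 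Your ``formal reduction'' genuinely requires the log-level equality (to form \(\iota^{\log}\circ p^{\log}\) and invoke the epimorphism property), so the whole proof rests on a claim that is stronger than the statement and very likely false in the stated generality. Moreover, both of your key steps are left as sketches: step (b) is a one-line appeal to \autoref{lem: morph split log pt} with no argument, and you yourself flag step (a) as an unresolved obstacle --- in particular, \autoref{cor: coprod exists} requires \emph{strict closed immersions}, and \(i_1^{\log}, i_2^{\log}\) are not such (and cannot be arranged to be, since they need not even be strict).

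For comparison, the paper's proof stays entirely at the level a different construction makes accessible. One factors \(i^{\log}\) through the strict closed immersion \(Z^{\log} := \overline{\im(i)}^{\log}_{\red} \hookrightarrow Y^{\log}\) (strictness is automatic because the log structure on \(Z^{\log}\) is pulled back from \(Y^{\log}\)); the epimorphism hypothesis then makes \(Z^{\log}\hookrightarrow Y^{\log}\) an epimorphism, and comparing the two canonical inclusions into the self-pushout \(Y^{\log}\amalg_{Z^{\log}}Y^{\log}\) (which lies in \(\LSchb{S}\) by \autoref{cor: coprod exists}) forces \(Z^{\log}\hookrightarrow Y^{\log}\) to be an isomorphism, so \(Y\) is reduced with \(|Y| = \overline{\{i_1(x), i_2(x)\}}\). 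Condition (ii) shows the residue field maps \(k(x)\to k(i_k(x))\) are isomorphisms, so \(i_1(x), i_2(x)\) are closed points and \(|Y| = \{i_1(x), i_2(x)\}\); connectedness then collapses \(|Y|\) to a single point rather than being used to rule out a hypothetical \(y_1\neq y_2\) by a test-object contradiction. Reducedness makes \(Y\) the spectrum of a field, and the retraction identities \(p\circ i_k = \id_X\) finish the proof. You would need to supply an argument of roughly this shape (or another complete one) for the scheme-level statement; the log-level identification you aim for should be abandoned.
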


\begin{proof}
  Write \(Z^{\log}\dfn \overline{\im(i)}_{\red}^{\log}\) (cf. \autoref{defi log sch} \ref{Zlog defi}), where we note that \(Z^{\log}\) belongs to the full subcategory \(\LSchb{S}\subset \LSch_{/S^{\log}}\).
  By condition \ref{enumi: X fs log pt epi from X X}, \(X\amalg X\) is reduced.
  Hence, by \cite[\href{https://stacks.math.columbia.edu/tag/056B}{Tag 056B}]{stacks-project}, the scheme-theoretic image of \(i\) is equal to \(Z\) (cf. \cite[\href{https://stacks.math.columbia.edu/tag/01R7}{Tag 01R7}]{stacks-project}, \cite[\href{https://stacks.math.columbia.edu/tag/01R6}{Tag 01R6}]{stacks-project}).
  Hence the morphism \(i^{\log}:X^{\log}\amalg X^{\log}\to Y^{\log}\) factors uniquely through the strict closed immersion \(Z^{\log} \hookrightarrow Y^{\log}\).
  By condition \ref{enumi: epi epi from X X}, \(Z^{\log}\hookrightarrow Y^{\log}\) is an epimorphism in \(\LSchb{S}\).
  Hence the two natural inclusions \(Y^{\log} \hookrightarrow Y^{\log}\amalg_{Z^{\log}} Y^{\log}\) coincide (where we note that by \autoref{cor: coprod exists} \ref{enumi: ft cor: coprod exists}, the push-out \(Y^{\log}\amalg_{Z^{\log}} Y^{\log}\) belongs to the full subcategory \(\LSchb{S}\subset \LSch_{/S^{\log}}\)).
  Thus, by the construction of \(Y^{\log} \hookrightarrow Y^{\log}\amalg_{Z^{\log}} Y^{\log}\) (cf. \autoref{coprod log sch} \ref{coprod log sch exists} and the proof of \cite[\href{https://stacks.math.columbia.edu/tag/0E25}{Tag 0E25}]{stacks-project}) and \autoref{cor: coprod exists} \ref{enumi: ft cor: coprod exists}, the strict closed immersion \(Z^{\log}\hookrightarrow Y^{\log}\) is an isomorphism.
  In particular, it holds that \(Y^{\log} = \overline{\im(i)}_{\red}^{\log}\).

  Write \(x\in X\) for the unique point (cf. condition \ref{enumi: X fs log pt epi from X X}).
  By condition \ref{enumi: id epi from X X}, the morphisms of fields \(k(x) \xrightarrow{} k(i_1(x))\) and \(k(x) \xrightarrow{} k(i_2(x))\) induced by \(p:Y\to X\) are isomorphisms.
  Hence, by \cite[\href{https://stacks.math.columbia.edu/tag/01TE}{Tag 01TE}]{stacks-project}, \(i_1(x)\) and \(i_2(x)\) are closed points of \(Y\).
  Thus it holds that
  \[|Y| = \overline{\im(i)} = \overline{\{i_1(x),i_2(x)\}} = \overline{\{i_1(x)\}} \cup \overline{\{i_2(x)\}} = \{i_1(x),i_2(x)\}.\]
  By condition \ref{enumi: connected epi from X X}, \(|Y|\) is of cardinality \(1\).
  Moreover, since \(Y = \overline{\im(i)}_{\red}\) is reduced, \(Y\) is isomorphic to the spectrum of a field.
  Thus, since \(p\circ i_1 = p\circ i_2 = \id_X\), the morphisms \(i_1\), \(i_2\), \(p\) are isomorphisms, and \(i_1=i_2=p^{-1}\).
  This completes the proof of \autoref{lem: epi from X X 1}.
\end{proof}

\begin{prop}\label{prop: fs log pt}
  Let \(S^{\log}\) be a locally Noetherian fs log scheme and \(X^{\log}\) an object of \(\LSchb{S}\).
  Assume that \(|X|\) is connected.
  Then \(X^{\log}\) is \textbf{not} an fs log point
  if and only if
  there exist an object \(Y^{\log}\in \LSchb{S}\),
  morphisms \(i_1^{\log},i_2^{\log}:X^{\log}\to Y^{\log}\) in \(\LSchb{S}\), and
  a morphism \(p^{\log}:Y^{\log}\to X^{\log}\) in \(\LSchb{S}\) such that the following conditions hold:
  \begin{enumerate}
    \item \label{enumi: id lem: fs}
    \(p^{\log}\circ i_1^{\log} = p^{\log}\circ i_2^{\log} = \id_{X^{\log}}\).
    \item \label{enumi: epi lem: fs}
    The morphism
    \(i^{\log} : X^{\log}\amalg X^{\log}\to Y^{\log}\)
    determined by \(i_1^{\log}\) and \(i_2^{\log}\) is an epimorphism in \(\LSchb{S}\).
    \item \label{enumi: conn lem: fs}
    \(|Y|\) is connected.
    \item \label{enumi: not isom lem: fs}
    Neither \(i_1^{\log}\) nor \(i_2^{\log}\) is an isomorphism.
    \item \label{enumi: univ surj lem: fs}
    For any morphism \(f^{\log}: Z^{\log}\to Y^{\log}\) in \(\LSchb{S}\) such that \(Z\neq \emptyset\), there exists a commutative diagram
    \[\begin{tikzcd}
      W^{\log} \ar[r] \ar[d]&
      Z^{\log} \ar[d,"f^{\log}"]& \\
      X^{\log}\amalg X^{\log} \ar[r,"i^{\log}"]& Y^{\log}
    \end{tikzcd}\]
    in \(\LSchb{S}\) such that \(W\neq \emptyset\).
  \end{enumerate}
  In particular,
  the property that \(X^{\log}\) is an fs log point
  may be characterized category-theoretically
  from the data \((\LSchb{S}, X^{\log})\)
  (cf. \autoref{lem: empty and conn} \ref{enumi: empty lem: empty and conn} \ref{enumi: conn lem: empty and conn}).
\end{prop}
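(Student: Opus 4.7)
The ``if'' direction is the straightforward contrapositive: assuming $X^{\log}$ is an fs log point together with conditions (i)--(iii), the hypotheses of \autoref{lem: epi from X X 1} are met verbatim, and its conclusion forces $i_1^{\log}$ and $i_2^{\log}$ to be isomorphisms, directly contradicting (iv). Hence no data satisfying (i)--(v) can exist when $X^{\log}$ is an fs log point.

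For the ``only if'' direction, I would construct $Y^{\log}$ as a pushout $X^{\log} \amalg_{W^{\log}} X^{\log}$ along a proper non-empty strict closed sub-log-scheme $W^{\log} \hookrightarrow X^{\log}$ lying in $\LSchb{S}$. Since $X^{\log}$ is not an fs log point but $|X|$ is connected, $X$ cannot be the spectrum of a field, so two cases arise. If $|X|$ contains more than one point, take $W^{\log}$ to be the reduced strict closed sub-log-scheme supported on a closed point of $X$; such a closed point exists because the locally Noetherian assumption guarantees that closed points of affine open subsets remain closed in $X$. If $|X|$ is a single point, then $X$ must be non-reduced (else $X$ would be the spectrum of a field and $X^{\log}$ would be an fs log point), which forces $\mathrm{red} \notin \bbullet$; one may then take $W^{\log} \dfn X^{\log}_{\red}$. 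In either case, $W^{\log}$ inherits every property in $\bbullet$ from $X^{\log}$ because $W^{\log} \hookrightarrow X^{\log}$ is a strict closed immersion. Invoking \autoref{cor: coprod exists} supplies the pushout $Y^{\log} \in \LSchb{S}$. Take $i_1^{\log}$ and $i_2^{\log}$ to be the two natural inclusions, and take $p^{\log}$ to be the morphism induced by the universal property of the pushout from $(\id_{X^{\log}}, \id_{X^{\log}})$.

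Verification of (i)--(iv) is then routine: (i) is the defining property of $p^{\log}$; (ii) holds because $i^{\log}$ is the coequalizer in $\LSchb{S}$ of the two morphisms $W^{\log} \rightrightarrows X^{\log} \amalg X^{\log}$, hence an epimorphism; (iii) follows from the description of $|Y|$ as the topological pushout $|X| \cup_{|W|} |X|$ of a connected space along a non-empty subspace; (iv) holds because $W^{\log} \subsetneq X^{\log}$ ensures that $|Y|$ strictly contains each $i_k(|X|)$. The main anticipated obstacle is condition (v). The essential input is that $|X^{\log} \amalg X^{\log}| \to |Y^{\log}|$ is surjective, which is immediate from the pushout description. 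Given $f^{\log}: Z^{\log} \to Y^{\log}$ with $Z \neq \emptyset$, my plan is to choose $z \in Z$ together with $x \in X^{\log} \amalg X^{\log}$ lifting $f(z) \in Y$, and to construct $W^{\flat} \in \LSchb{S}$ as a split fs log point whose underlying scheme is the spectrum of a finite extension of $k(z)$ containing $k(x)$ over $k(f(z))$, equipped via \autoref{lem: morph split log pt} with a log structure compatible with both prescribed maps. The technical subtlety lies in ensuring simultaneous compatibility of the two log structures and in checking that $W^{\flat}$ satisfies every property in $\bbullet$.
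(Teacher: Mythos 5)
Your argument for the ``if'' direction (no such data exists when \(X^{\log}\) is an fs log point) has a fatal gap. \autoref{lem: epi from X X 1} concludes only that the \emph{underlying morphisms of schemes} \(i_1\), \(i_2\), \(p\) are isomorphisms; it says nothing about the log structures. Condition \ref{enumi: not isom lem: fs}, by contrast, asserts that neither \(i_1^{\log}\) nor \(i_2^{\log}\) is an isomorphism \emph{of log schemes}, so there is no contradiction with conditions (i)--(iii) alone: one can perfectly well have \(X=Y\) and \(i_1=i_2=p=\id_X\) on underlying schemes while the maps on characteristic monoids \(\bar{i}_k^{\flat}\) fail to be injective. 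This is exactly the hard case, and it is why condition \ref{enumi: univ surj lem: fs} is in the statement at all. The paper's proof handles it by passing to stalks \(M=\oMM{X,\bar x}\), \(N=\oMM{Y,\bar x}\), noting that (iv) forces \(\bar{i}_1^{\flat},\bar{i}_2^{\flat}\) to be non-injective, invoking \autoref{cor: monoid qint} to produce a sharp fs monoid \(N\subset L\subset N^{\gp}\) with both pushouts \(M\amalg_{\bar i_k^\flat,N}L\) non-quasi-integral, realizing \(N\hookrightarrow L\) geometrically via \autoref{lem: morph split log pt} as a morphism \(Z^{\log}\to Y^{\log}\), and concluding from Nakayama's lemma that both fs fiber products \(X^{\log}\times_{i_k^{\log},Y^{\log}}Z^{\log}\) are empty, violating (v). None of this machinery appears in your plan, and without it the proposition is not proved.

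Your ``only if'' direction is essentially the paper's construction (pushout along a proper non-empty reduced strict closed sub-log-scheme), but two points need repair. First, your existence argument for the closed subscheme via ``closed points of affine opens remain closed in \(X\)'' is false for general locally Noetherian schemes (e.g.\ the generic point of \(\Spec(\Z_p)\) is closed in the open subscheme it spans but not in \(\Spec(\Z_p)\)); fortunately no closed point is needed --- any point whose closure is proper, or \(X_{\red}\) in the one-point non-reduced case, suffices, exactly as you do in your second case. Second, and more importantly, for condition (v) surjectivity of \(i\) on underlying spaces is \emph{not} the essential input: fiber products of fs log schemes can be empty even when the underlying scheme-theoretic fiber product is non-empty (this is precisely the quasi-integrality phenomenon exploited in the other direction). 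The point you flag as a ``technical subtlety'' is resolved by observing that, because \(j^{\log}\) is a \emph{strict} closed immersion, the construction of \autoref{coprod log sch} makes \(i^{\log}:X^{\log}\amalg X^{\log}\to Y^{\log}\) strict; one may then take \(W^{\log}\) to be (the reduction of) the fiber product \((X^{\log}\amalg X^{\log})\btimes_{Y^{\log}}Z^{\log}\), whose non-emptiness follows from strictness plus surjectivity of \(i\), with no hand-built split log point required.
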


\begin{proof}
  First, we prove necessity.
  Assume that \(X^{\log}\) is not an fs log point.
  Then \(X\) is not isomorphic to the spectrum of a field.
  Hence there exists a closed immersion \(j:X_1\hookrightarrow X\)
  such that \(X_1\) is reduced, \(X_1\neq \emptyset\), and \(j\) is not an isomorphism.
  The pull-back of the log structure of \(X^{\log}\) to \(X_1\) determines a log structure on \(X_1\), together with a strict closed immersion \(j^{\log}: X_1^{\log}\hookrightarrow X^{\log}\) in \(\LSchb{S}\).
  Write \(Y^{\log} \dfn X^{\log}\amalg_{X_1^{\log}}X^{\log}\) (cf. \autoref{cor: coprod exists} \ref{enumi: ft cor: coprod exists});
  \(i_1^{\log},i_2^{\log}:X^{\log}\to Y^{\log}\) for the natural inclusions;
  \(p^{\log}:Y^{\log}\to X^{\log}\) for the unique morphism such that \(p^{\log}\circ i_1^{\log} = p^{\log}\circ i_2^{\log} = \id_{X^{\log}}\).
  Then, by \autoref{cor: coprod exists} \ref{enumi: ft cor: coprod exists}, \(Y^{\log}\in \LSchb{S}\), and \((Y^{\log},i_1^{\log},i_2^{\log},p^{\log})\) satisfies conditions \ref{enumi: id lem: fs} and \ref{enumi: epi lem: fs}.
  Since \(|X|\) is connected, \(X_1\neq \emptyset\), and \(j:X_1\hookrightarrow X\) is not an isomorphism, it follows from the construction of \(X^{\log}\amalg_{X_1^{\log}}X^{\log}\) (cf. \autoref{coprod log sch} \ref{coprod log sch exists} and the proof of \cite[\href{https://stacks.math.columbia.edu/tag/0E25}{Tag 0E25}]{stacks-project}) that \((Y^{\log},i_1^{\log},i_2^{\log},p^{\log})\) satisfies conditions \ref{enumi: conn lem: fs} and \ref{enumi: not isom lem: fs}.
  Since \(j^{\log}:X_1^{\log}\hookrightarrow X^{\log}\) is a strict closed immersion, it follows from the construction of the log structure of \(X^{\log}\amalg_{X_1^{\log}}X^{\log}\) (cf. \autoref{coprod log sch} \ref{coprod log sch exists}) that the morphism \(i^{\log}: X^{\log}\amalg X^{\log} \to Y^{\log}\) determined by \(i_1^{\log}\) and \(i_2^{\log}\) is strict.
  Thus, since \(i\) is surjective, \((Y^{\log},i_1^{\log},i_2^{\log},p^{\log})\) satisfies condition \ref{enumi: univ surj lem: fs}.
  This completes the proof of necessity.


  Next, we prove sufficiency.
  Assume that \(X^{\log}\) is an fs log point.
  Let \(Y^{\log} \in \LSchb{S}\) be an object;
  \(i_1^{\log},i_2^{\log}:X^{\log}\to Y^{\log}\) morphisms in \(\LSchb{S}\);
  \(p^{\log}:Y^{\log}\to X^{\log}\) a morphism in \(\LSchb{S}\) such that \((Y^{\log}, i_1^{\log}, i_2^{\log}, p^{\log})\) satisfies conditions \ref{enumi: id lem: fs} \ref{enumi: epi lem: fs} \ref{enumi: conn lem: fs} \ref{enumi: not isom lem: fs}.
  To prove sufficiency, it suffices to prove that \((Y^{\log}, i_1^{\log}, i_2^{\log}, p^{\log})\) does not satisfy condition \ref{enumi: univ surj lem: fs}.
  Since \((Y^{\log}, i_1^{\log}, i_2^{\log}, p^{\log})\) satisfies conditions \ref{enumi: id lem: fs} \ref{enumi: epi lem: fs} \ref{enumi: conn lem: fs}, it follows from \autoref{lem: epi from X X 1}
  that \(i_1\), \(i_2\), \(p\) are isomorphisms, and \(i_1 = i_2 = p^{-1}\).
  Hence, to prove that \((Y^{\log}, i_1^{\log}, i_2^{\log}, p^{\log})\) does not satisfy condition \ref{enumi: univ surj lem: fs}, we may assume without loss of generality that \(X=Y\), and \(i_1=i_2=p=\id_X\).

  Let \(\bar{x}\to X\) be a geometric point.
  Write \(M\dfn \omcM_{X,\bar{x}}\), \(N\dfn \omcM_{Y,\bar{x}}\), \(\bar{i}_1^{\flat}\dfn \bar{i}^{\flat}_{1,\bar{x}}\), \(\bar{i}_2^{\flat}\dfn \bar{i}^{\flat}_{2,\bar{x}}\), and \(\bar{p}^{\flat}\dfn \bar{p}^{\flat}_{\bar{x}}\).
  Then \(M\), \(N\) are sharp fs monoids, and \(\bar{i}_1^{\flat}\), \(\bar{i}_2^{\flat}\), \(\bar{p}^{\flat}\) are local morphisms of monoids.
  Moreover, since \((Y^{\log}, i_1^{\log}, i_2^{\log}, p^{\log})\) satisfies condition \ref{enumi: id lem: fs}, it holds that \(\bar{i}_1^{\flat} \circ \bar{p}^{\flat} = \bar{i}_2^{\flat}\circ \bar{p}^{\flat} = \id_M\):
  \[\begin{tikzcd}
    M \ar[r,"\bar{p}^{\flat}"']\ar[rr,bend left=20,"\id_M"] &
    N \ar[r,shift left=0.3ex]
    \ar[r,shift right=0.3ex,swap,"{\bar{i}_1^{\flat},\bar{i}_2^{\flat}}"]&[1cm] M.
  \end{tikzcd}\]
  Since \((Y^{\log}, i_1^{\log}, i_2^{\log}, p^{\log})\) satisfies condition \ref{enumi: not isom lem: fs}, neither \(\bar{i}_1^{\flat}\) nor \(\bar{i}_2^{\flat}\) is injective.
  Hence, by \autoref{cor: monoid qint},
  there exists a sharp fs monoid \(N\subset L\subset N^{\gp}\)
  such that neither \(M\amalg_{\bar{i}_1^{\flat},N}L\) nor
  \(M\amalg_{\bar{i}_2^{\flat},N}L\) is quasi-integral.
  Since \(N\hookrightarrow L\) is local, it follows from \autoref{lem: morph split log pt} that there exists a morphism \(f^{\log}:Z^{\log}\to Y^{\log}\) in \(\LSchb{S}\) such that \(Z^{\log}\neq \emptyset\), and for any geometric point \(\bar{z}\to Z\),
  \(\bar{f}_{\bar{z}}^{\flat}: (f^{-1}\omcM_Y)_{\bar{z}} \to \omcM_{Z,\bar{z}}\) is isomorphic as an object of \(\Mon_{N/}\) to \(N\subset L\).
  Since neither \(M\amalg_{\bar{i}_1^{\flat},N}L\) nor
  \(M\amalg_{\bar{i}_2^{\flat},N}L\) is quasi-integral,
  it follows from \cite[Lemma 2.2.5]{Nak} that for each \(k\in \{1,2\}\),
  \(X^{\log}\times_{i_k^{\log},Y^{\log},f^{\log}}Z^{\log} = \emptyset\).
  Thus \((Y^{\log}, i_1^{\log}, i_2^{\log}, p^{\log})\) does not satisfy condition \ref{enumi: univ surj lem: fs}.
  This completes the proof of \autoref{prop: fs log pt}.
\end{proof}

Next, we prove the following property concerning fiber products in \(\LSchb{S}\).

\begin{lem}\label{lem: fiber prod}
  Let \(S^{\log}\) be a locally Noetherian fs log scheme, \(f^{\log}: X^{\log}\to Y^{\log}\) a morphism in \(\LSchb{S}\), and \(g^{\log}: Y_0^{\log}\to Y^{\log}\) a quasi-compact morphism in \(\LSchb{S}\).
  Then the fiber product \(X^{\log}\btimes_{Y^{\log}}Y_0^{\log}\) exists in \(\LSchb{S}\).
  Moreover, the natural morphism
  \[(X^{\log}\btimes_{Y^{\log}}Y_0^{\log})_{\red}\xrightarrow{\sim} (X^{\log}\times_{Y^{\log}}Y_0^{\log})_{\red}\]
  induced by the morphism of log schemes \(X^{\log}\btimes_{Y^{\log}}Y_0^{\log}\to X^{\log}\times_{Y^{\log}}Y_0^{\log}\) is an isomorphism of log schemes.
  In particular, if \(f^{\log}\) is strict, then the natural projection \(X^{\log}\btimes_{Y^{\log}}Y_0^{\log}\to Y_0^{\log}\) is also strict.
\end{lem}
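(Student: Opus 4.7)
The plan is to construct \(X^{\log}\btimes_{Y^{\log}}Y_0^{\log}\) as follows: let \(P^{\log}\dfn X^{\log}\times_{Y^{\log}}Y_0^{\log}\) denote the fiber product in \(\LSch_{/S^{\log}}\), and define
\[
Q^{\log} \dfn \begin{cases} P^{\log}_{\red} & \text{if } \mathrm{red}\in \bbullet, \\ P^{\log} & \text{otherwise.} \end{cases}
\]
I would then show that \(Q^{\log}\) represents the fiber product in \(\LSchb{S}\), which reduces to two points: (i) \(Q^{\log}\) lies in \(\LSchb{S}\), and (ii) any morphism \(W^{\log}\to P^{\log}\) in \(\LSch_{/S^{\log}}\) with \(W^{\log}\in \LSchb{S}\) factors uniquely through the natural morphism \(Q^{\log}\to P^{\log}\). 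Point (ii) is immediate: if \(\mathrm{red}\in \bbullet\), then \(W\) is reduced, so the factorization through the reduction \(P^{\log}_{\red}\) is automatic, while otherwise \(Q^{\log} = P^{\log}\) and there is nothing to check.

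For point (i), the non-trivial task is to verify that each property in \(\bbullet\setminus \{\mathrm{red}\}\) is inherited by the scheme \(P\). Since \(g\) is quasi-compact, the projection \(P\to X\) is quasi-compact (as the base change of \(g\)), and hence \(P\to S\) is quasi-compact when \(\mathrm{qcpt}\in \bbullet\). For the properties \(\mathrm{qsep}, \mathrm{sep}, \mathrm{ft}\), I would exploit the cartesian square
\[\begin{tikzcd}
P \ar[r] \ar[d] & X\times_S Y_0 \ar[d] \\
Y \ar[r,"\Delta_{Y/S}"'] & Y\times_S Y,
\end{tikzcd}\]
which exhibits \(P\hookrightarrow X\times_S Y_0\) as the pullback of \(\Delta_{Y/S}\). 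When \(Y\to S\) is quasi-separated (resp.\ separated), \(\Delta_{Y/S}\) is quasi-compact (resp.\ a closed immersion), so \(P\hookrightarrow X\times_S Y_0\) becomes a quasi-compact monomorphism (resp.\ a closed immersion); combined with the fact that \(X\times_S Y_0\to S\) inherits quasi-separatedness (resp.\ separatedness) from \(X\to S\) and \(Y_0\to S\), this gives the desired conclusion. The case \(\mathrm{ft}\) is handled by combining quasi-compactness of \(P\to X\) with the standard cancellation property that \(g: Y_0\to Y\) is locally of finite type whenever \(Y_0\to S\) and \(Y\to S\) are.

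The isomorphism \((X^{\log}\btimes_{Y^{\log}}Y_0^{\log})_{\red}\xrightarrow{\sim}(X^{\log}\times_{Y^{\log}}Y_0^{\log})_{\red}\) then follows from the construction, since reduction is idempotent: whether \(Q^{\log}\) equals \(P^{\log}\) or \(P^{\log}_{\red}\), one has \((Q^{\log})_{\red} = (P^{\log})_{\red}\). Finally, the strictness claim follows from \autoref{rem: str bc is str}: the projection \(P^{\log}\to Y_0^{\log}\) is strict as the base change of the strict morphism \(f^{\log}\), and the strict closed immersion \(P^{\log}_{\red}\hookrightarrow P^{\log}\) composed with this strict projection is again strict. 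I expect the main obstacle to lie in the case analysis for Step (i), particularly in treating a mixed subset \(\bbullet\) such as \(\{\mathrm{qsep}\}\) or \(\{\mathrm{qcpt},\mathrm{qsep}\}\), where one must carefully verify that the quasi-compact monomorphism \(P\hookrightarrow X\times_S Y_0\) is automatically quasi-separated and that quasi-separatedness composes with the quasi-separated structure morphism of \(X\times_S Y_0\to S\).
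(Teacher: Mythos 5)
Your overall strategy (form the fiber product in \(\LSch_{/S^{\log}}\), pass to the reduction when \(\mathrm{red}\in\bbullet\), and check membership in \(\LSchb{S}\)) is the same as the paper's, and your point (ii), the reduction-idempotence remark, and the final strictness claim are handled correctly. However, there is a genuine gap in point (i): you identify the underlying scheme \(P\) of the fiber product \(X^{\log}\times_{Y^{\log}}Y_0^{\log}\) taken in \(\LSch_{/S^{\log}}\) --- that is, in the category of \emph{fs} log schemes --- with the scheme-theoretic fiber product \(X\times_Y Y_0\). This identification is false in general: forming the fiber product of fs log schemes requires integralizing and saturating the pushout of charts, which alters the underlying scheme (this is precisely why the projection from such a fiber product can even fail to be surjective, cf. the use of non-quasi-integral pushout monoids to produce an \emph{empty} fiber product in the proof of \autoref{prop: fs log pt}). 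Consequently, your assertion that ``\(P\to X\) is quasi-compact as the base change of \(g\)'' and your cartesian square exhibiting \(P\hookrightarrow X\times_S Y_0\) as a pullback of \(\Delta_{Y/S}\) are statements about \(X\times_Y Y_0\), not about \(P\).

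The gap is repairable, and the repair is exactly the step the paper takes at the outset: by \autoref{rem: int and sat monoids} \ref{enumi: fg saturation is fg}, the natural morphism \(P\to X\times_Y Y_0\) is \emph{finite} (locally it is built from the surjection \(\Z[T]\twoheadrightarrow \Z[T^{\inte}]\) followed by the finite extension \(\Z[T^{\inte}]\to\Z[T^{\sat}]\) for a chart monoid \(T\)). Since a finite morphism is quasi-compact, quasi-separated, separated, and of finite type, every permanence argument you carry out for \(X\times_Y Y_0\to S\) transfers to \(P\to S\) by composing with this finite morphism; with that sentence inserted, your case analysis for \(\mathrm{qcpt}\), \(\mathrm{qsep}\), \(\mathrm{sep}\), \(\mathrm{ft}\) goes through. (Your argument for the final assertion is fine as written, since the strict closed immersion \(P_{\red}\hookrightarrow P\) composed with the strict projection of \autoref{rem: str bc is str} is strict.)
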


\begin{proof}
  Write \(\bbbullet\dfn \bbullet\setminus\{\mathrm{red}\}\).
  By \autoref{rem: int and sat monoids} \ref{enumi: fg saturation is fg}, the underlying scheme of \(X^{\log}\times_{Y^{\log}}Y_0^{\log}\) is finite over \(X\times_Y Y_0\).
  Since \(g^{\log}: Y_0^{\log}\to Y^{\log}\) is quasi-compact, the natural projection \(X\times_Y Y_0\to X\) is quasi-compact.
  Thus \(X^{\log}\times_{Y^{\log}}Y_0^{\log}\) belongs to the full subcategory \(\LSchc{S}\subset \LSch_{/S^{\log}}\).
  If, moreover, \(\{\mathrm{red}\}\subset \bbullet\), then \((X^{\log}\times_{Y^{\log}}Y_0^{\log})_{\red}\) belongs to the full subcategory \(\LSchb{S}\subset \LSch_{/S^{\log}}\) and indeed may be interpreted as the fiber product \(X^{\log}\btimes_{Y^{\log}}Y_0^{\log}\) in \(\LSchb{S}\).
  In particular, for arbitrary \(\bbullet\), the natural morphism
  \[(X^{\log}\btimes_{Y^{\log}}Y_0^{\log})_{\red}\xrightarrow{\sim} (X^{\log}\times_{Y^{\log}}Y_0^{\log})_{\red}\]
  is an isomorphism.
  The final assertion follows immediately from \autoref{rem: str bc is str}.
  This completes the proof of \autoref{lem: fiber prod}.
\end{proof}

Finally, we give a category-theoretic characterization of log residue fields in \(\LSchb{S}\) (cf. \autoref{defi: log res fld}).

\begin{cor}\label{cor: log res fld}
  Let \(S^{\log}\) be a locally Noetherian fs log scheme and \(f^{\log} : Y^{\log} \to X^{\log}\) a morphism in \(\LSchb{S}\).
  Assume that \(Y^{\log}\) is an fs log point.
  Then \(f^{\log}\) is a log residue field of \(X^{\log}\) if and only if \(f^{\log}\) satisfies the following condition:
  \begin{enumerate}[start=2,label=(\fnsymbol*)]
    \item \label{enumi cor: log res fld condi}
    For any morphism \(g^{\log}: Z^{\log}\to X^{\log}\) in \(\LSchb{S}\) such that \(Z^{\log}\) is an fs log point, if \(Z^{\log}\btimes_{X^{\log}}Y^{\log}\neq \emptyset\), then the natural projection \(Z^{\log}\btimes_{X^{\log}}Y^{\log} \xrightarrow{\sim} Z^{\log}\) is an isomorphism (where we note that since \(Y^{\log}\) is an fs log point, the fiber product \(Z^{\log}\btimes_{X^{\log}}Y^{\log}\) exists in \(\LSchb{S}\), cf. \autoref{lem: fiber prod}):
    \[\begin{tikzcd}
      Z^{\log}\btimes_{X^{\log}}Y^{\log}\neq \emptyset \ar[r] \ar[d, "\rotatebox{90}{\(\sim\)}"'] &[1.5cm]
      Y^{\log} \ar[d,"f^{\log}"] \\
      Z^{\log} \ar[r,"g^{\log}"] 
      & X^{\log}
    \end{tikzcd}\]
  \end{enumerate}
  In particular, the property that
  \(f^{\log}: Y^{\log}\to X^{\log}\) is a log residue field of \(X^{\log}\)
  may be characterized category-theoretically
  (cf. \autoref{lem: empty and conn} \ref{enumi: empty lem: empty and conn}, \autoref{prop: fs log pt})
  from the data \((\LSchb{S}, f^{\log})\).
\end{cor}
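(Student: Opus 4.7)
My plan is to prove both directions by reducing the argument to a computation of the underlying scheme of $Z^{\log}\btimes_{X^{\log}}Y^{\log}$ and its two projections, controlled via \autoref{lem: fiber prod} together with the strictness hypothesis on one of the two input morphisms.

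For necessity, assume $f^{\log}$ is a log residue field, so that $f^{\log}$ is strict and, writing $y$ for the unique point of $Y$ and $x\dfn f(y)$, $f$ identifies with $\Spec(k(x))\to X$. For any $g^{\log}:Z^{\log}\to X^{\log}$ with $Z^{\log}$ an fs log point (unique point $z$), nonemptiness of the fiber product forces $g(z)=x$, and the ring-level computation $k(z)\otimes_{\mathcal{O}_{X,x}}k(x)=k(z)$ (valid since $\mathcal{O}_{X,x}\to k(z)$ factors through $k(x)$) shows that $Z\times_X Y$ has underlying scheme $Z$. Since $f^{\log}$ is strict, \autoref{lem: fiber prod} shows that the projection $Z^{\log}\btimes_{X^{\log}}Y^{\log}\to Z^{\log}$ is strict, and that the underlying scheme of $Z^{\log}\btimes_{X^{\log}}Y^{\log}$ agrees with $Z$ regardless of whether $\mathrm{red}\in\bbullet$ (since $Z$ is already reduced). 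A strict morphism whose underlying morphism of schemes is an isomorphism is an isomorphism of fs log schemes.

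For sufficiency, write $x\dfn f(y)$, and take $Z^{\log}$ to be the fs log scheme with underlying scheme $\Spec(k(x))$ and log structure pulled back from $X^{\log}$ along the canonical morphism $g:\Spec(k(x))\to X$; then $g^{\log}$ is by construction a log residue field. A small check verifies $Z^{\log}\in\LSchb{S}$: ``red'', ``qcpt'', ``qsep'', ``sep'' are automatic for the spectrum of a field, and when $\mathrm{ft}\in\bbullet$, the residue field $k(x)$ is of finite type over the residue field of the image of $y$ in $S$, so $Z\to S$ is of finite type. A symmetric computation to the one above (now using that $g^{\log}$ is strict) shows that $Z^{\log}\btimes_{X^{\log}}Y^{\log}$ has underlying scheme $Y\neq\emptyset$. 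Applying condition \ref{enumi cor: log res fld condi} to $g^{\log}$ yields an isomorphism $\pi_Z:Z^{\log}\btimes_{X^{\log}}Y^{\log}\xrightarrow{\sim}Z^{\log}$; composing $\pi_Z^{-1}$ with the $Y$-projection (which is strict by \autoref{lem: fiber prod} applied to the strict morphism $g^{\log}$) produces a strict morphism $h^{\log}:Z^{\log}\to Y^{\log}$ over $X^{\log}$. Since $\pi_Z$ is an isomorphism, the induced map $k(x)\to k(y)$ is an isomorphism of fields, so $h$ is an isomorphism of underlying schemes; hence the strict morphism $h^{\log}$ is an isomorphism of fs log schemes, and $f^{\log}=g^{\log}\circ(h^{\log})^{-1}$ exhibits $f^{\log}$ as a log residue field.

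I expect the main bookkeeping obstacle to be the comparison between $Z^{\log}\btimes_{X^{\log}}Y^{\log}$ and the naive scheme-theoretic fiber product $Z\times_X Y$ in both regimes $\mathrm{red}\in\bbullet$ and $\mathrm{red}\not\in\bbullet$; this is handled uniformly by the strictness of one of the two input morphisms, which forces the fs-log fiber product to coincide with the scheme-theoretic one on underlying schemes, which in our setting is already reduced. The other point worth care is verifying that the constructed $Z^{\log}$ in the sufficiency direction lies in $\LSchb{S}$, but this is routine given that all properties in $\bbullet$ are inherited from $X^{\log}$ by passage to a residue field.
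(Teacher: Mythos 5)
Your proof is correct and follows essentially the same route as the paper's: necessity is the underlying-scheme computation enabled by \autoref{lem: fiber prod} and strictness of \(f^{\log}\), and sufficiency tests condition \ref{enumi cor: log res fld condi} against the log residue field \(g^{\log}:Z^{\log}\to X^{\log}\) at \(x=f(y)\) and combines it with necessity applied to that test object to conclude \(Y^{\log}\cong Z^{\log}\) over \(X^{\log}\). The only point stated loosely is the finite-type verification of \(Z^{\log}\in\LSchb{S}\) when \(\mathrm{ft}\in\bbullet\): rather than invoking the residue field of the image of \(y\) in \(S\) (note that \(\Spec(k(s))\to S\) need not be of finite type), one should observe that \(k(y)\) is a finitely generated algebra over an affine coordinate ring of \(S\), hence by Zariski's lemma finite over \(k(x)\), so that Artin--Tate yields that \(k(x)\) is itself finitely generated over that coordinate ring.
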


\begin{proof}
  Necessity follows immediately from \autoref{lem: fiber prod}.
  In the remainder of the proof of \autoref{cor: log res fld}, we prove sufficiency.
  Assume that \(f^{\log}: Y^{\log}\to X^{\log}\) satisfies condition \ref{enumi cor: log res fld condi}.
  Write \(x\in \im(f)\) for the unique point contained in \(\im(f)\subset X\) and \(g^{\log}: Z^{\log} \to X^{\log}\) for the log residue field determined by \(x\in X\).
  Then, by the necessity portion of \autoref{cor: log res fld},
  the natural projection \(Z^{\log}\btimes_{X^{\log}}Y^{\log}\xrightarrow{\sim} Y^{\log}\neq \emptyset\) is an isomorphism.
  Hence, by condition \ref{enumi cor: log res fld condi}, the natural projection \(Z^{\log}\btimes_{X^{\log}}Y^{\log}\xrightarrow{\sim} Z^{\log}\) is an isomorphism.
  Thus \(f^{\log}: Y^{\log}\to X^{\log}\) is isomorphic as a log scheme over \(X^{\log}\) to the log residue field \(g^{\log}: Z^{\log} \to X^{\log}\).
  This completes the proof of \autoref{cor: log res fld}.
\end{proof}

\section{Strict Morphisms}
\label{section: str mor}

In this section, we assume that \[\bbullet\subset \{\mathrm{red,qcpt,qsep,sep,ft}\}.\]

In the present section, we give a category-theoretic characterization of strict morphisms in \(\LSchb{S}\) (cf. \autoref{strict general}).

First, we prove the following property of the \textit{strict locus} of a morphism of fs log schemes.

\begin{lem}\label{lem: str locus is qc open}
  Let \(f^{\log}:X^{\log}\to Y^{\log}\) be a morphism of fs log schemes.
  Write
  \[\mathrm{Str}(f^{\log}) \dfn \left\{x\in X \,\middle|\, \begin{array}{l}
    \text{the composite of \(f^{\log}: X^{\log} \to Y^{\log}\) with the log residue field} \\
    \text{\(\Spec(k(x))^{\log}\to X^{\log}\) determined by \(x\in X\) is strict}
  \end{array}\right\}. \]
  Then the following assertions hold:
  \begin{enumerate}
    \item \label{enumi: str locus open}
    \(\mathrm{Str}(f^{\log})\subset |X|\) is an open subset.
    Thus, \(\mathrm{Str}(f^{\log})\subset X\) may be regarded as an open subscheme.
    \item \label{enumi: str locus pb}
    Let
    \[\begin{tikzcd}
      X^{\log} \ar[r,"p^{\log}","\text{\rm strict}"'] \ar[d,"f^{\log}"'] &[1cm]
      X_0^{\log} \ar[d,"f_0^{\log}"] \\
      Y^{\log} \ar[r,"q^{\log}","\text{\rm strict}"'] &
      Y_0^{\log}
    \end{tikzcd}\]
    be a commutative diagram of fs log schemes such that \(p^{\log}\) and \(q^{\log}\) are strict.
    Then it holds that \(\mathrm{Str}(f^{\log}) = p^{-1}(\mathrm{Str}(f_0^{\log}))\).
    \item \label{enumi: str locus qc}
    The open immersion \(\mathrm{Str}(f^{\log})\hookrightarrow X\) (cf. \ref{enumi: str locus open}) is quasi-compact.
  \end{enumerate}
\end{lem}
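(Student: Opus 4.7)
The plan is to translate the three assertions into a statement about the morphism $\bar{f}^{\flat}$ of characteristic sheaves, and then to verify them by means of a chart of $f^{\log}$. First I would record the preliminary observation that, since the log residue field $\Spec(k(x))^{\log} \to X^{\log}$ is strict, its composite with $f^{\log}$ is strict if and only if the stalk morphism
\[\bar{f}^{\flat}_{\bar{x}} : (f^{-1}\omcM_Y)_{\bar{x}} \longrightarrow \omcM_{X,\bar{x}}\]
is an isomorphism of sharp fs monoids for any geometric point $\bar{x}$ over $x$. Thus $\mathrm{Str}(f^{\log})$ is precisely the locus where $\bar{f}^{\flat}$ is stalkwise an isomorphism.

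Assertion \ref{enumi: str locus pb} is then immediate from this translation: the strictness of $p^{\log}$ and $q^{\log}$ gives canonical identifications $\omcM_X \cong p^{-1}\omcM_{X_0}$ and $\omcM_Y \cong q^{-1}\omcM_{Y_0}$ as étale sheaves of monoids, and the commutativity of the given square identifies $\bar{f}^{\flat}$ with the pullback of $\bar{f}_0^{\flat}$ along $p$; hence $\bar{f}^{\flat}$ is an isomorphism at $\bar{x}$ if and only if $\bar{f}_0^{\flat}$ is an isomorphism at $p(\bar{x})$.

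For assertions \ref{enumi: str locus open} and \ref{enumi: str locus qc} I would work étale-locally, passing to an affine strict étale neighborhood $U \to X$ on which $f^{\log}$ admits a chart: fs monoids $Q, P$, a morphism $\varphi : Q \to P$, and compatible charts $P \to \Gamma(U, \mcO_U)$, $Q \to \Gamma(V, \mcO_V)$ on some étale neighborhood $V \to Y$ containing the image of $U$. For each geometric point $\bar{x}' \to U$, the face $G_{\bar{x}'} \dfn \{\,p \in P \mid \alpha_X(p) \in \mcO^{\times}_{X,\bar{x}'}\,\} \subset P$ realizes $\omcM_{X,\bar{x}'} \cong P_{G_{\bar{x}'}}/G_{\bar{x}'}^{\gp}$; chart compatibility forces the corresponding face of $Q$ to equal $\varphi^{-1}(G_{\bar{x}'})$; and $\bar{f}^{\flat}_{\bar{x}'}$ is canonically identified with the morphism $Q/\varphi^{-1}(G_{\bar{x}'}) \to P/G_{\bar{x}'}$ induced by $\varphi$. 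The combinatorial heart of the argument is then the observation that the set
\[\mathcal{F}(\varphi) \ \dfn \ \{\, F \subset P \ \text{a face} \ \mid \ Q/\varphi^{-1}(F) \to P/F \ \text{is an isomorphism}\,\}\]
is upward closed in the (finite) poset of faces of $P$, since quotienting an isomorphism of sharp fs monoids further by a face of the target yields again an isomorphism. Consequently
\[\mathrm{Str}(f^{\log}) \cap U \ = \ \bigcup_{F \in \mathcal{F}(\varphi)_{\min}} \{\, \bar{x}' \in U \mid G_{\bar{x}'} \supset F \,\},\]
with each set on the right equal to the principal open $D(\alpha_X(p_1) \cdots \alpha_X(p_r)) \subset U$ for any finite generating set $p_1, \ldots, p_r$ of the face $F$. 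Since $P$ has only finitely many faces, this exhibits $\mathrm{Str}(f^{\log}) \cap U$ as a finite union of principal opens of the affine scheme $U$, simultaneously yielding openness (\ref{enumi: str locus open}) and quasi-compactness of the open immersion $\mathrm{Str}(f^{\log}) \cap U \hookrightarrow U$ (\ref{enumi: str locus qc}); both properties descend from an étale cover of $X$.

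The main obstacle will be the careful bookkeeping of how the stalks of $\omcM$ and of $\bar{f}^{\flat}$ are read off from the chart, in particular justifying the face compatibility $F_{\bar{y}'} = \varphi^{-1}(G_{\bar{x}'})$ and the identification of $\bar{f}^{\flat}_{\bar{x}'}$ with the chart-level morphism $Q/\varphi^{-1}(G_{\bar{x}'}) \to P/G_{\bar{x}'}$; a secondary but routine technical point is the fact that openness and quasi-compactness of open immersions may indeed be verified étale-locally on the target.
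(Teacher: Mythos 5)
Your proposal is correct, and while it shares the paper's overall skeleton --- translate strictness of the composite into the condition that \(\bar{f}^{\flat}_{\bar{x}}:(f^{-1}\omcM_Y)_{\bar{x}}\to \omcM_{X,\bar{x}}\) is an isomorphism, prove (ii) by the strict identifications of characteristic sheaves, and reduce (i) and (iii) to a chart --- it finishes (i) and (iii) by a genuinely different route. The paper proves (i) by spreading out: since the stalks of \(\omcM\) are finitely generated, an isomorphism at one geometric point extends to an isomorphism of sheaves over an \'etale neighborhood; and it proves (iii) by using (ii) and Kato's chart lemma to reduce to the universal morphism \(\Spec(\Z[M])^{\log}\to\Spec(\Z[N])^{\log}\), where Noetherianity of \(\Spec(\Z[M])\) makes every open immersion automatically quasi-compact. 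You instead compute the strict locus inside a chart explicitly: identifying \(\bar{f}^{\flat}_{\bar{x}'}\) with \(Q/\varphi^{-1}(G_{\bar{x}'})\to P/G_{\bar{x}'}\) (the face compatibility \(F_{\bar{y}'}=\varphi^{-1}(G_{\bar{x}'})\) follows from locality of \(f^{\#}\) on stalks, and the upward closure of \(\mathcal{F}(\varphi)\) from the third-isomorphism property of quotients by corresponding faces), you exhibit \(\mathrm{Str}(f^{\log})\cap U\) as a finite union of principal opens \(D(\alpha_X(p_1)\cdots\alpha_X(p_r))\), since a face of a finitely generated monoid is generated by the generators it contains and \(P\) has only finitely many faces. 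This gives (i) and (iii) in one stroke, is more self-contained (no appeal to Noetherianity of the monoid algebra), and yields the sharper structural statement that the strict locus is \'etale-locally a finite union of principal opens; the cost is the face bookkeeping you flag, which the paper's shorter argument outsources to general spreading-out and Noetherianity. Both arguments correctly invoke fpqc-locality of quasi-compactness on the base to descend from the \'etale cover.
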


\begin{proof}
  First, we prove assertion \ref{enumi: str locus open}.
  Let \(\bar{x}_0\to X\) be a geometric point such that \(\im(\bar{x}_0\to X)\subset \mathrm{Str}(f^{\log})\).
  Then \(\bar{f}^{\flat}_{\bar{x}_0}: (f^{-1}\omcM_Y)_{\bar{x}_0} \xrightarrow{\sim} \omcM_{X,\bar{x}_0}\) is an isomorphism.
  Hence, there exists an \'etale neighborhood \(i_0: U_0\to X\) of \(\bar{x}_0\) such that \(i_0^{-1}f^{-1}\omcM_Y \xrightarrow{\sim} i_0^{-1}\omcM_X\) is an isomorphism.
  Thus \(\im(\bar{x}_0\to X)\subset |\im(i_0)|\subset \mathrm{Str}(f^{\log})\).
  Since \(i_0\) is \'etale, \(|\im(i_0)|\subset |X|\) is an open subset.
  This implies that \(\mathrm{Str}(f^{\log})\subset |X|\) is an open subset.
  This completes the proof of assertion \ref{enumi: str locus open}.

  Next, we prove assertion \ref{enumi: str locus pb}.
  Let \(h^{\log}: Z^{\log}\to X^{\log}\) be a strict morphism of fs log schemes.
  Since \(p^{\log}\) is strict, \(p^{\log}\circ h^{\log}\) is also strict.
  Since \(q^{\log} \circ f^{\log} \circ h^{\log} = f_0^{\log}\circ p^{\log}\circ h^{\log}\), and \(q^{\log}\) is strict, it holds that
  \begin{quote}
    \(f^{\log}\circ h^{\log}\) is strict if and only if \(f_0^{\log}\circ p^{\log}\circ h^{\log}\) is strict.
  \end{quote}
  This implies that \(\mathrm{Str}(f^{\log}) = p^{-1}(\mathrm{Str}(f_0^{\log}))\).
  This completes the proof of assertion \ref{enumi: str locus pb}.

  Next, we prove assertion \ref{enumi: str locus qc}.
  By \cite[\href{https://stacks.math.columbia.edu/tag/02KQ}{Tag 02KQ}]{stacks-project} and \cite[\href{https://stacks.math.columbia.edu/tag/022C}{Tag 022C}]{stacks-project}, to prove assertion \ref{enumi: str locus qc}, it suffices to prove that there exists an \'etale covering \(\{f_i:U_i\to X\}_{i\in I}\) of \(X\) such that for any \(i\in I\), the open immersion \(f_i^{-1}(\mathrm{Str}(f^{\log}))\hookrightarrow U_i\) is a quasi-compact morphism.
  Hence, by assertion \ref{enumi: str locus pb} and \cite[Definition 2.9 (2), Lemma 2.10]{Kato}, to prove assertion \ref{enumi: str locus qc}, we may assume without loss of generality that there exist a morphism of fs monoids \(\psi:N\to M\) and a commutative diagram of fs log schemes
  \[\begin{tikzcd}
    X^{\log} \ar[d,"f^{\log}"'] \ar[r,"p^{\log}","\text{strict}"'] &[1cm]
    \Spec(\Z[M])^{\log} \ar[d,"\psi^{\log}"] \\
    Y^{\log} \ar[r,"q^{\log}","\text{strict}"'] &[1cm]
    \Spec(\Z[N])^{\log}
  \end{tikzcd}\]
  --- where we write \(\Spec(\Z[-])^{\log}\) for the log scheme determined by the monoid ring \(\Z[-]\) and the morphism of monoids \((-)\to \Z[-]\), and
  we write \(\psi^{\log}:\Spec(\Z[M])^{\log}\to \Spec(\Z[N])^{\log}\) for the morphism of log schemes determined by \(\psi: N\to M\) ---
  such that \(p^{\log}\) and \(q^{\log}\) are strict.
  Then, by assertion \ref{enumi: str locus pb}, it holds that \(\mathrm{Str}(f^{\log}) = p^{-1}(\mathrm{Str}(\psi^{\log}))\).
  Moreover, since \(\Spec(\Z[M])\) is Noetherian, the open immersion \(\mathrm{Str}(\psi^{\log}) \hookrightarrow \Spec(\Z[M])\) is a quasi-compact morphism (cf. \cite[\href{https://stacks.math.columbia.edu/tag/01OX}{Tag 01OX}]{stacks-project}).
  Thus \(\mathrm{Str}(f^{\log}) \hookrightarrow X\) is also a quasi-compact morphism (cf. \cite[\href{https://stacks.math.columbia.edu/tag/01K5}{Tag 01K5}]{stacks-project}).
  This completes the proof of \autoref{lem: str locus is qc open}.
\end{proof}

Next, we give a category-theoretic characterization of strict morphisms (cf. \autoref{strict general}).

\begin{lem}\label{exact mono fs pts}
  Let \(S^{\log}\) be a locally Noetherian fs log scheme and \(f^{\log}: X^{\log} \to Y^{\log}\) a morphism between fs log points in \(\LSchb{S}\).
  Assume that the diagonal morphism \(X^{\log} \to X^{\log}\btimes_{Y^{\log}} X^{\log}\) in \(\LSchb{S}\) is strict (where we note that since \(X^{\log}\) and \(Y^{\log}\) are fs log points, the fiber product \(X^{\log}\btimes_{Y^{\log}} X^{\log}\) exists in \(\LSchb{S}\), by \autoref{lem: fiber prod}).
  Then, for any geometric point \(\bar{x}\to X\), it holds that \[\coker(\bar{f}_{\bar{x}}^{\flat,\gp}: (f^{-1}\omcM_Y)_{\bar{x}}^{\gp} \to \omcM_{X,\bar{x}}^{\gp})\otimes_{\Z}\Q = 0.\]
\end{lem}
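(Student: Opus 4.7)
The plan is to translate strictness of the diagonal into a condition on characteristic monoids and then read off the conclusion. Fix a geometric point \(\bar{x}\to X\) and write \(M \dfn (f^{-1}\omcM_Y)_{\bar x}\), \(N \dfn \omcM_{X,\bar x}\), and \(\varphi \dfn \bar{f}^{\flat}_{\bar x}\colon M\to N\); then \(M\), \(N\) are sharp fs monoids and \(\varphi\) is a local morphism of monoids. Since any reduction morphism is strict, \autoref{lem: fiber prod} implies that the stalk of the characteristic sheaf of \(X^{\log}\btimes_{Y^{\log}}X^{\log}\) at the image of the diagonal of \(\bar x\) coincides with the analogous stalk of the fs log fiber product \(X^{\log}\times_{Y^{\log}}X^{\log}\). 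By the standard fs-ification construction of fiber products in the category of fs log schemes, this common stalk may be computed as
\[
P \ \dfn \ Q^{\sat}/(Q^{\sat})^{\times},
\]
where \(\tilde P \dfn N\amalg_M N\) is the pushout in the category of monoids, \(Q \dfn \tilde P^{\inte}\subset \tilde P^{\gp}\) is its integralization, and \(Q^{\sat}\subset \tilde P^{\gp} = N^{\gp}\amalg_{M^{\gp}}N^{\gp}\) is the saturation of \(Q\) in \(\tilde P^{\gp}\).

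Now the codiagonal \(\Delta^{*,\gp}\colon \tilde P^{\gp}\to N^{\gp}\), \((a,b)\mapsto a+b\), is surjective, and unwinding the pushout description yields an isomorphism \(\coker(\varphi^{\gp})\xrightarrow{\sim}\ker(\Delta^{*,\gp})\), \([a]\mapsto [(a,-a)]\). The technical heart of the plan is to identify \((Q^{\sat})^{\times}\) with the torsion subgroup of \(\coker(\varphi^{\gp})\) under this isomorphism. The inclusion ``\(\subset\)'' is immediate: any unit of \(Q^{\sat}\) maps under the monoid codiagonal \(Q^{\sat}\to N\) to a unit of \(N\), and sharpness of \(N\) forces this image to vanish, whence \((Q^{\sat})^{\times}\subset \ker(\Delta^{*,\gp})\cong\coker(\varphi^{\gp})\). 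For the reverse inclusion, a direct unwinding of the pushout of monoids shows that an equality \([(p,q)] = [(ka,-ka)]\) in \(\tilde P^{\gp}\) with \(p,q\in N\) forces \(q = -p\); sharpness of \(N\) then forces \(p = 0\), and one concludes that \((ka,-ka)\in Q\) if and only if \(ka\in \im(\varphi^{\gp})\). Hence \((a,-a)\in(Q^{\sat})^{\times}\) precisely when \([a]\in\coker(\varphi^{\gp})\) is torsion.

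Combining the two observations, \(P^{\gp} = \tilde P^{\gp}/(Q^{\sat})^{\times}\) fits into a short exact sequence of abelian groups
\[
0\ \longrightarrow\ \coker(\varphi^{\gp})/(Q^{\sat})^{\times}\ \longrightarrow\ P^{\gp}\ \longrightarrow\ N^{\gp}\ \longrightarrow\ 0,
\]
in which the leftmost term coincides with the torsion-free quotient of \(\coker(\varphi^{\gp})\). Strictness of the diagonal means precisely that the codiagonal \(P\to N\) is an isomorphism of sharp fs monoids; by the integrality of \(P\) and \(N\), this is equivalent to \(P^{\gp}\xrightarrow{\sim}N^{\gp}\), which by the above exact sequence forces the torsion-free quotient of \(\coker(\varphi^{\gp})\) to vanish. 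Thus \(\coker(\varphi^{\gp})\) is torsion, i.e., \(\coker(\varphi^{\gp})\otimes_{\Z}\Q = 0\), as desired. The main obstacle is the identification of \((Q^{\sat})^{\times}\) with the torsion subgroup of \(\coker(\varphi^{\gp})\), which rests crucially on the sharpness of \(N\).
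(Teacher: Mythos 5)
Your proposal is correct, and its skeleton agrees with the paper's: both arguments reduce, via the strictness of the diagonal and the comparison \((X^{\log}\btimes_{Y^{\log}}X^{\log})_{\red}\cong(X^{\log}\times_{Y^{\log}}X^{\log})_{\red}\) of \autoref{lem: fiber prod}, to a statement about the sharpened saturation of the pushout \(\omcM_{X,\bar x}\amalg_{(f^{-1}\omcM_Y)_{\bar x}}\omcM_{X,\bar x}\) being isomorphic to \(\omcM_{X,\bar x}\). Where you diverge is in how that statement is exploited. The paper runs a soft \(\Q\)-dimension count: since \((-)^{\gp}\) and \((-)\otimes_{\Z}\Q\) preserve push-outs, \(\dim_{\Q}(M^{\gp}\otimes\Q)=2\dim_{\Q}(\omcM_{X,\bar x}^{\gp}\otimes\Q)-\dim_{\Q}(\im(\bar f^{\flat,\gp}_{\bar x})\otimes\Q)\); it then invokes \autoref{lem: quot is surj}, \autoref{lem: sharp fs pushout sharp}, and the quasi-integrality criterion \autoref{lem: qint Nak in other words} to see that \(M\) is sharp, quasi-integral and finitely generated, so that \((M^{\sat})^{\times}\) is a \emph{finite} group and hence \(M^{\gp}\otimes\Q\cong\omcM_{X,\bar x}^{\gp}\otimes\Q\); comparing dimensions forces the cokernel to be torsion. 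You instead compute the relevant unit group exactly: your identification of \(\ker(\tilde P^{\gp}\to N^{\gp})\) with \(\coker(\varphi^{\gp})\) via \(a\mapsto[(a,-a)]\), and of \((Q^{\sat})^{\times}\) with the torsion subgroup of \(\coker(\varphi^{\gp})\) (both inclusions of which I checked and which do hinge, as you say, on the sharpness of \(N\) and the surjectivity of \(N\times N\to\tilde P\)), yields the sharper conclusion that \(\ker(P^{\gp}\to N^{\gp})\) \emph{is} the torsion-free quotient of \(\coker(\varphi^{\gp})\), from which the lemma is immediate. This buys you independence from Nakayama's quasi-integrality lemma and from finite generation, at the cost of a more hands-on unwinding of the pushout; it also recovers, as a byproduct, the paper's intermediate claim that \((Q^{\sat})^{\times}\) is finite. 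The one point you share with the paper and state somewhat quickly is the identification of the characteristic stalk of the fs fiber product at the diagonal image of \(\bar x\) with \(Q^{\sat}/(Q^{\sat})^{\times}\) rather than with \(Q^{\sat}/F\) for a possibly larger face \(F\); in fact your own computation shows a posteriori that the face of elements mapping to units is exactly \((Q^{\sat})^{\times}\) here, so this is harmless, but it deserves a sentence.
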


\begin{proof}
  Let \(\bar{x} \to X\) be a geometric point.
  Write
  \(M\dfn \omcM_{X,\bar{x}}\amalg_{\bar{f}_{\bar{x}}^{\flat},(f^{-1}\omcM_Y)_{\bar{x}},\bar{f}_{\bar{x}}^{\flat}}\omcM_{X,\bar{x}}\).
  Since the groupification functor \((-)^{\gp}: \Mon\to \mathsf{Ab}\) and the functor \((-)\otimes_{\Z}\Q\) preserve push-outs, it holds that
  \[\dim_{\Q}(M^{\gp}\otimes_{\Z}\Q) = 2\dim_{\Q}(\omcM_{X,\bar{x}}^{\gp}\otimes_{\Z}\Q) - \dim_{\Q}(\im(\bar{f}^{\flat,\gp}_{\bar{x}})\otimes_{\Z}\Q).\]
  Hence, to prove \autoref{exact mono fs pts}, it suffices to prove that
  \[\dim_{\Q}(M^{\gp}\otimes_{\Z}\Q) = \dim_{\Q}(\omcM_{X,\bar{x}}^{\gp}\otimes_{\Z}\Q).\]
  By (the proof of) \autoref{lem: fiber prod}, the diagonal morphism \(\Delta^{\log}:X^{\log}\to X^{\log}\times_{Y^{\log}}X^{\log}\) in \(\LSch_{/S^{\log}}\) is strict.
  Hence the composite of the natural morphisms
  \[\left(\mcM_{X,\bar{x}}\amalg_{f_{\bar{x}}^{\flat},(f^{-1}\mcM_Y)_{\bar{x}},f_{\bar{x}}^{\flat}}\mcM_{X,\bar{x}}\right)^{\sat}
  \xrightarrow{\sim} \left(\mcM_X\amalg_{f^{\flat},f^{-1}\mcM_Y,f^{\flat}}\mcM_X\right)_{\bar{x}}^{\sat} \to
  \left(\Delta^{-1}\mcM_{X^{\log}\times_{Y^{\log}}X^{\log}}\right)_{\bar{x}}\]
  induces an isomorphism
  \(
    \overline{\Delta}_{\bar{x}}^{\flat}: M^{\sat}/(M^{\sat})^{\times}\xrightarrow{\sim} \omcM_{X,\bar{x}}
  \).
  By \autoref{lem: qint Nak in other words}, \autoref{lem: quot is surj}, and \autoref{lem: sharp fs pushout sharp}, \(M\) is sharp, quasi-integral, and finitely generated.
  Hence \(M^{\inte}\) is sharp and fine.
  This implies that \(M^{\sat}\) is fs (cf. \autoref{rem: int and sat monoids} \ref{enumi: fg saturation is fg}), and, moreover, \((M^{\sat})^{\times}\) is a finite abelian group.
  Hence the morphism \(M^{\gp}\otimes_{\Z}\Q \xrightarrow{\sim} \omcM_{X,\bar{x}}^{\gp}\otimes_{\Z}\Q\) induced by \(\overline{\Delta}_{\bar{x}}^{\flat}\) is an isomorphism.
  In particular, it holds that
  \(\dim_{\Q}(M^{\gp}\otimes_{\Z}\Q) = \dim_{\Q}(\omcM_{X,\bar{x}}^{\gp}\otimes_{\Z}\Q)\).
  This completes the proof of \autoref{exact mono fs pts}.
\end{proof}

\begin{lem}\label{lem: Kummer type irred strict}
  Let \(S^{\log}\) be a locally Noetherian fs log scheme and \(p^{\log}: X^{\log}\to Y^{\log}\) a morphism in \(\LSchb{S}\).
  Assume that the following conditions hold:
  \begin{enumerate}
    \item
    \(|X|\) is irreducible, and \(Y^{\log}\) is an fs log point.
    Write \(\eta\in X\) for the unique generic point.
    \item \label{enumi: 1 Kummer type strict}
    For any geometric point \(\bar{x}\to X\) such that the log residue field determined by the image of \(\bar{x}\to X\) belongs to the full subcategory \(\LSchb{S}\subset \LSch_{/S^{\log}}\), it holds that \(\coker(\bar{p}_{\bar{x}}^{\flat,\gp})\otimes_{\Z}\Q = 0\).
    \item \label{enumi: 2 Kummer type strict}
    The composite of \(p^{\log}: X^{\log}\to Y^{\log}\) and the log residue field \(\Spec(k(\eta))^{\log}\to X^{\log}\) determined by \(\eta\in X\) is strict.
  \end{enumerate}
  Then \(p^{\log}\) is strict.
\end{lem}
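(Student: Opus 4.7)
The plan is to show that the strict locus $U := \mathrm{Str}(p^{\log}) \subset X$, which by \autoref{lem: str locus is qc open} \ref{enumi: str locus open} is a quasi-compact open subscheme of $X$, equals all of $X$. Condition \ref{enumi: 2 Kummer type strict} asserts precisely that the generic point $\eta$ lies in $U$, so $U$ is a dense open subset of the irreducible $X$. Arguing by contradiction, suppose $Z := X \setminus U$ is nonempty; the strategy is to exhibit a point $x \in Z$ whose log residue field belongs to $\LSchb{S}$, and then to combine conditions \ref{enumi: 1 Kummer type strict} and \ref{enumi: 2 Kummer type strict} via a Kummer-type argument to deduce that $\bar{p}^{\flat}_{\bar{x}}$ is an isomorphism, whence $x \in U$, the desired contradiction.

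To produce such an $x$: if $\mathrm{ft} \notin \bbullet$, any point $x \in Z$ works, since $\Spec(k(x)) \to S$ is automatically reduced, quasi-compact, quasi-separated, and separated. If $\mathrm{ft} \in \bbullet$, then $X \to S$ is of finite type, hence $X$ is locally Noetherian; consequently the nonempty closed subset $Z$ contains a point $x$ that is closed in $X$, and the composite $\Spec(k(x)) \hookrightarrow X \to S$ is a closed immersion followed by an ft morphism, hence ft, so the log residue field at $x$ lies in $\LSchb{S}$.

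Now fix a geometric point $\bar{x}$ over $x$, together with a compatible geometric generic point $\bar{\eta}_x$ of the strict henselization of $X$ at $\bar{x}$ (so that $\bar{\eta}_x$ lies over $\eta$). Write $N := (p^{-1}\omcM_Y)_{\bar{x}}$, a sharp fs monoid; since $|Y|$ is a single point, the stalk $(p^{-1}\omcM_Y)_{\bar{\eta}_x}$ is canonically identified with $N$. The generization map $\sigma \colon \omcM_{X,\bar{x}} \twoheadrightarrow \omcM_{X,\bar{\eta}_x}$ of sharp fs monoids is a face quotient: it realizes $\omcM_{X,\bar{\eta}_x}$ as $\omcM_{X,\bar{x}}/F$ for some face $F \subset \omcM_{X,\bar{x}}$, and $\ker(\sigma^{\gp}) = F^{\gp}$. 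By condition \ref{enumi: 2 Kummer type strict}, the composite $\sigma \circ \bar{p}^{\flat}_{\bar{x}} = \bar{p}^{\flat}_{\bar{\eta}_x} \colon N \xrightarrow{\sim} \omcM_{X,\bar{\eta}_x}$ is an isomorphism; on groupifications this yields a splitting $\omcM_{X,\bar{x}}^{\gp} \cong \bar{p}^{\flat,\gp}_{\bar{x}}(N^{\gp}) \oplus F^{\gp}$, whence $\coker(\bar{p}^{\flat,\gp}_{\bar{x}}) \cong F^{\gp}$. By condition \ref{enumi: 1 Kummer type strict}, applicable since the log residue field at $\bar{x}$ was chosen to lie in $\LSchb{S}$, the group $F^{\gp}$ is torsion; but $F$, as a face of a sharp fs monoid, is itself sharp fs, so $F^{\gp}$ is finitely generated and torsion-free. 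Hence $F^{\gp} = 0$, $F = 0$, $\sigma$ is an isomorphism, and therefore $\bar{p}^{\flat}_{\bar{x}}$ is an isomorphism, placing $x$ in $U$ and contradicting $x \in Z$. The main technical step is the identification of $\coker(\bar{p}^{\flat,\gp}_{\bar{x}})$ with $F^{\gp}$ via the generization map; once this is in hand the Kummer-type hypothesis \ref{enumi: 1 Kummer type strict} combined with the torsion-freeness of $F^{\gp}$ forces strictness.
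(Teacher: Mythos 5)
Your argument is essentially the paper's: both proofs reduce to showing that the open strict locus \(\mathrm{Str}(p^{\log})\) contains every point whose log residue field lies in \(\LSchb{S}\), and at such a point both use the generization map to \(\eta\) together with condition (iii) to split \(\coker(\bar{p}^{\flat,\gp}_{\bar{x}})\) off as a direct summand of the torsion-free group \(\omcM_{X,\bar{x}}^{\gp}\) (you phrase this via the face \(F\) with \(\ker(\sigma^{\gp})=F^{\gp}\), the paper via the commutative localization square), after which condition (ii) forces the summand to vanish. The stalkwise computation is correct.

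The one weak spot is your justification, in the case \(\mathrm{ft}\in\bbullet\), for producing a point of \(Z\) whose log residue field lies in \(\LSchb{S}\): you assert that because \(X\) is locally Noetherian, the nonempty closed subset \(Z\) contains a point closed in \(X\). Local Noetherianity by itself does not deliver this --- existence of closed points is guaranteed by quasi-compactness, and \(X\) need not be quasi-compact since \(S\) need not be. Moreover, what you actually need is not closedness in \(X\) but that \(\Spec(k(x))\to S\) be of finite type. Both issues are repaired at once: choose an affine open \(\Spec(A)\subset S\) whose preimage \(X_A\subset X\) meets \(Z\); then \(X_A\) is Noetherian (of finite type over the Noetherian ring \(A\)), so the nonempty closed subset \(Z\cap X_A\) contains a point \(x\) closed in \(X_A\), and \(\Spec(k(x))\hookrightarrow X_A\to\Spec(A)\to S\) is of finite type, while the remaining properties in \(\{\mathrm{red,qcpt,qsep,sep}\}\) hold automatically for the spectrum of a field, exactly as in your first case. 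With this substitution the proof is complete.
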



\begin{proof}
  Let \(\bar{x}\to X\) be a geometric point such that the log residue field \(\Spec(k(x))^{\log}\to X^{\log}\) determined by \(\bar{x}\to X\) belongs to the full subcategory \(\LSchb{S}\subset \LSch_{/S^{\log}}\) and \(\bar{\eta}\to X\) a geometric point such that \(\eta\in \im(\bar{\eta}\to X)\).
  Then, since \(Y^{\log}\) is an fs log point, the localization morphism \((p^{-1}\omcM_Y)_{\bar{x}} \xrightarrow{\sim} (p^{-1}\omcM_Y)_{\bar{\eta}}\) is an isomorphism.
  Moreover, by condition \ref{enumi: 2 Kummer type strict}, \(\bar{p}^{\flat}_{\bar{\eta}}:(p^{-1}\omcM_Y)_{\bar{\eta}}\xrightarrow{\sim} \omcM_{X,\bar{\eta}}\) is an isomorphism.
  Since the following diagram of monoids commutes, \(\coker(\bar{p}_{\bar{x}}^{\flat,\gp})\) is a direct summand of \(\omcM_{X,\bar{x}}^{\gp}\):
  \[\begin{tikzcd}
    (p^{-1}\omcM_Y)_{\bar{x}} \ar[r,"\sim"] \ar[d,"\bar{p}^{\flat}_{\bar{x}}"'] &[1.5cm] (p^{-1}\omcM_Y)_{\bar{\eta}} \ar[d,"\bar{p}^{\flat}_{\bar{\eta}}","{\rotatebox{90}{\(\sim\)}}"'] \\
    \omcM_{X,\bar{x}} \ar[r,"\textrm{localization}"] & \omcM_{X,\bar{\eta}},
  \end{tikzcd}\]
  where \(\omcM_{X,\bar{x}} \to \omcM_{X,\bar{\eta}}\) is the localization morphism.
  Thus, since \(\omcM_{X,\bar{x}}^{\gp}\) is torsion-free, \(\coker(\bar{p}_{\bar{x}}^{\flat,\gp})\) is also torsion-free.
  Hence, by condition \ref{enumi: 1 Kummer type strict}, \(\bar{p}_{\bar{x}}^{\flat,\gp}\) is surjective.
  Since the above diagram commutes, \(\bar{p}_{\bar{x}}^{\flat,\gp}\) is an isomorphism, and, moreover, \(\omcM_{X,\bar{x}} \to \omcM_{X,\bar{\eta}}\) is injective.
  Hence, \(\bar{p}_{\bar{x}}^{\flat}\) is also an isomorphism.
  Since \(\omcM_{X,\bar{x}}\) and \((p^{-1}\omcM_Y)_{\bar{x}}\) are fs monoids,
  this implies that there exists an open subscheme \(U\subset X\) such that the following conditions hold:
  \begin{itemize}
    \item \(\bar{p}^{\flat}|_U: (p^{-1}\omcM_Y)|_U \to \omcM_X|_U\) is an isomorphism.
    \item For any geometric morphism \(\bar{x}\to X\), if the log residue field \(\Spec(k(x))^{\log}\to X^{\log}\) determined by \(\bar{x}\to X\) belongs to the full subcategory \(\LSchb{S}\subset \LSch_{/S^{\log}}\), then \(\im(\bar{x}\to X)\subset U\).
  \end{itemize}
  Since \(S\) is locally Noetherian, the second condition implies that \(U=X\).
  Thus \(\bar{p}^{\flat}\) is an isomorphism.
  This completes the proof of \autoref{lem: Kummer type irred strict}.
\end{proof}

\begin{prop}\label{ex grp obj over fld}
  Let \(S^{\log}\) be a locally Noetherian fs log scheme and \(p^{\log}: G^{\log} \to Y^{\log}\) a group object in \(\LSchb{S}\) over \(Y^{\log}\) such that \(Y^{\log}\) is an fs log point.
  Write \(e^{\log}: Y^{\log}\to G^{\log}\) for the identity section.
  Assume that
  the following conditions hold:
  \begin{enumerate}[label=(\alph*)]
    \item \label{log grp obj condi 0}
    \(|G|\) is connected.
    \item \label{log grp obj condi 1}
    The identity section \(e^{\log}: Y^{\log}\to G^{\log}\) is a log residue field.
  \end{enumerate}
  Then the following assertions hold:
  \begin{enumerate}
    \item \label{grp obj of Kummer type}
    For any geometric point \(\bar{g}\to G\) such that the log residue field determined by the image of \(\bar{g}\to G\) belongs to the full subcategory \(\LSchb{S}\subset \LSch_{/S^{\log}}\), it holds that \(\coker(\bar{p}_{\bar{g}}^{\flat,\gp})\otimes_{\Z}\Q = 0\).
    \item \label{grp obj geom irred and strict}
    \(p^{\log}\) is strict.
  \end{enumerate}
\end{prop}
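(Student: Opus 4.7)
The strategy is to prove assertions (i) and (ii) separately: assertion (i) will be reduced to \autoref{exact mono fs pts} via a diagonal/shear computation, and assertion (ii) will be deduced from a group-translation argument on the strict locus of \autoref{lem: str locus is qc open}. Assertion (i) is designed to provide the hypothesis (ii) of \autoref{lem: Kummer type irred strict}; however, my preferred route for (ii) bypasses this lemma entirely, exploiting the fact that the group-object structure lets us spread strictness from the identity to every point of $G$.

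For assertion (i), fix a geometric point $\bar{g} \to G$ whose log residue field $j^{\log}: X^{\log} \to G^{\log}$ lies in $\LSchb{S}$, and set $q^{\log} = p^{\log} \circ j^{\log}: X^{\log} \to Y^{\log}$. Since $q^{\log}$ is a morphism between fs log points, \autoref{exact mono fs pts} reduces the cokernel-vanishing statement to strictness of the diagonal $\Delta_X^{\log}: X^{\log} \to X^{\log} \btimes_{Y^{\log}} X^{\log}$. The commutative square
\[\begin{tikzcd}
X^{\log} \ar[r, "\Delta_X^{\log}"] \ar[d, "j^{\log}"'] & X^{\log} \btimes_{Y^{\log}} X^{\log} \ar[d, "j^{\log} \btimes j^{\log}"] \\
G^{\log} \ar[r, "\Delta_G^{\log}"] & G^{\log} \btimes_{Y^{\log}} G^{\log}
\end{tikzcd}\]
has strict right vertical map, being a base change of the strict $j^{\log}$ (\autoref{rem: str bc is str}, \autoref{lem: fiber prod}); a short diagram chase thus reduces the problem to strictness of $\Delta_G^{\log}$. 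Here I invoke the group-object structure: the shear isomorphism $\phi^{\log}: G^{\log} \btimes_{Y^{\log}} G^{\log} \xrightarrow{\sim} G^{\log} \btimes_{Y^{\log}} G^{\log}$ defined on points by $(a, b) \mapsto (a, \iota(a) \cdot b)$ (with $\iota$ the inverse morphism) satisfies $\phi^{\log} \circ \Delta_G^{\log} = (\id_{G^{\log}}, e^{\log} \circ p^{\log})$ by the group axiom $m \circ (\iota, \id) = e \circ p$. The right-hand side is the base change of the strict $e^{\log}$ along the second projection $G^{\log} \btimes_{Y^{\log}} G^{\log} \to G^{\log}$, hence strict by \autoref{lem: fiber prod}, and composing with the isomorphism $(\phi^{\log})^{-1}$ preserves strictness. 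Finally, strictness of $j^{\log}$ identifies $\coker(\bar{q}^{\flat,\gp}_{\bar{x}})$ with $\coker(\bar{p}^{\flat,\gp}_{\bar{g}})$, yielding (i).

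For assertion (ii), consider the strict locus $U = \mathrm{Str}(p^{\log}) \subset G$, open by \autoref{lem: str locus is qc open} and containing $e(Y)$ since $e^{\log}$ is a log residue field and hence strict. It suffices to show $U = G$. Given $x \in G$, form the strict base changes $Y_{k(x)}^{\log} \to Y^{\log}$ (endowed with the pulled-back log structure) and $G_{k(x)}^{\log} \to G^{\log}$; by \autoref{lem: str locus is qc open}, the strict locus $\mathrm{Str}(p_{k(x)}^{\log})$ is precisely the preimage of $U$. The group-object structure provides the translation automorphism $L_x: G_{k(x)}^{\log} \xrightarrow{\sim} G_{k(x)}^{\log}$ over $Y_{k(x)}^{\log}$, built from the $k(x)$-point $x$ and the multiplication $m^{\log}$; it satisfies $p_{k(x)}^{\log} \circ L_x = p_{k(x)}^{\log}$ and hence preserves $\mathrm{Str}(p_{k(x)}^{\log})$. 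Since $L_x$ sends the identity section $e_{k(x)}$ to the $k(x)$-section determined by $x$, and $e_{k(x)} \in \mathrm{Str}(p_{k(x)}^{\log})$ by base change of $e(Y) \subset U$, we deduce $x_{k(x)} \in \mathrm{Str}(p_{k(x)}^{\log})$, whence $x \in U$.

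The main obstacle will be the careful bookkeeping in the shear computation for (i): identifying $(\id_{G^{\log}}, e^{\log} \circ p^{\log})$ with a base change of $e^{\log}$ along the second projection, and verifying that the group identities---phrased via the fiber product $\btimes$ in $\LSchb{S}$, which differs from the usual $\times$ in $\LSch$ by reduction when $\mathrm{red} \in \bbullet$---translate correctly into strictness conclusions via \autoref{rem: str bc is str} and \autoref{lem: fiber prod}. A secondary subtlety in (ii) is that $G_{k(x)}^{\log}$ need not lie in $\LSchb{S}$ when the log residue field at $x$ fails to be in $\LSchb{S}$, but this poses no obstruction since the translation argument may be carried out in the ambient category $\LSch$, strictness being a purely log-scheme-theoretic property.
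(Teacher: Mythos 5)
Your argument for assertion \ref{grp obj of Kummer type} is sound and is essentially the paper's argument in different packaging: the paper base-changes the group to the log residue field $Y_0^{\log}$ at $\bar{g}$ and uses left translation by the tautological section $g_0^{\log}$ (a legitimate $Y_0^{\log}$-valued point, since $Y_0^{\log}$ carries the log structure pulled back from $G^{\log}$) to show that the diagonal of $Y_0^{\log}\to Y^{\log}$ is strict, whereas you prove strictness of the diagonal of $G^{\log}\to Y^{\log}$ globally via the shear isomorphism and then restrict along $j^{\log}$. Both routes terminate in \autoref{exact mono fs pts}, and your bookkeeping (base change of $e^{\log}$ along the second projection, strictness of $j^{\log}\btimes j^{\log}$) goes through.

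Your proof of assertion \ref{grp obj geom irred and strict}, however, has a genuine gap: the translation automorphism $L_x$ does not exist as a morphism of log schemes for an arbitrary point $x\in G$. To build $L_x$ over $Y_{k(x)}^{\log}$ (log structure pulled back from $Y^{\log}$), the scheme-theoretic $k(x)$-point of $G$ must lift to a morphism of log schemes $Y_{k(x)}^{\log}\to G^{\log}$ over $Y^{\log}$; on stalks of characteristic monoids this requires a \emph{local} morphism $\omcM_{G,\bar{x}}\to \omcM_{Y,\bar{y}}$ retracting $\bar{p}^{\flat}_{\bar{x}}$, which in general does not exist unless $\bar{p}^{\flat}_{\bar{x}}$ is already an isomorphism (for instance, if $\omcM_Y=0$ and $\omcM_{G,\bar{x}}\neq 0$, there is no local morphism $\omcM_{G,\bar{x}}\to 0$ whatsoever, since every morphism of log structures is local). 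The argument is therefore circular: constructing the translation presupposes the strictness being proved, and passing to the ambient category $\LSch$ does not help, because the obstruction is to the existence of the log point, not to membership in $\LSchb{S}$. If one instead base-changes to the log residue field $T^{\log}$ at $x$ (log structure from $G^{\log}$), the tautological point does exist and translation shows the section $T^{\log}\to G^{\log}\btimes_{Y^{\log}}T^{\log}$ is strict --- but then $G^{\log}\btimes_{Y^{\log}}T^{\log}\to G^{\log}$ is not known to be strict, so \autoref{lem: str locus is qc open} \ref{enumi: str locus pb} cannot transport the conclusion back to $\mathrm{Str}(p^{\log})$; what that computation actually yields is exactly assertion \ref{grp obj of Kummer type} again. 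This is precisely why the paper upgrades \ref{grp obj of Kummer type} to strictness by a different mechanism: \autoref{lem: Kummer type irred strict} (torsion-freeness of $\omcM_{X,\bar{x}}^{\gp}$ together with strictness at the generic point of an irreducible closed subset) shows that $\mathrm{Str}(p^{\log})$ is stable under specialization, and then \autoref{lem: str locus is qc open} \ref{enumi: str locus qc} and \cite[\href{https://stacks.math.columbia.edu/tag/05JL}{Tag 05JL}]{stacks-project} together with connectedness of $|G|$ force $\mathrm{Str}(p^{\log})=G$. Some version of that argument is needed to complete assertion \ref{grp obj geom irred and strict}.
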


\begin{proof}
  First, we prove assertion \ref{grp obj of Kummer type}.
  Let \(\bar{g}\to G\) be a geometric point such that the log residue field determined by the image of \(\bar{g}\to G\) belongs to the full subcategory \(\LSchb{S}\subset \LSch_{/S^{\log}}\).
  Write
  \begin{itemize}
    \item \(g^{\log}: Y_0^{\log}\to G^{\log}\) for the log residue field determined by the image of \(\bar{g}\to G\);
    \item
    \(G_0^{\log}\dfn G^{\log}\btimes_{Y^{\log},p^{\log}\circ g^{\log}} Y_0^{\log}\to Y_0^{\log}\) for the group object in \(\LSchb{S}\) obtained by base-changing \(p^{\log}\) by \(p^{\log}\circ g^{\log}\) (where we note that since \(p^{\log}\circ g^{\log}: Y_0^{\log}\to Y^{\log}\) is a morphism between fs log points, the fiber product \(G^{\log}\btimes_{Y^{\log},p^{\log}\circ g^{\log}} Y_0^{\log}\) exists in \(\LSchb{S}\), cf. \autoref{lem: fiber prod});
    \item
    \(e_0^{\log}\dfn e^{\log}\btimes_{Y^{\log},p^{\log}\circ g^{\log}}\id_{Y_0^{\log}}: Y_0^{\log}\to G_0^{\log}\) for the identity section of \(G_0^{\log}\);
    \item
    \(g_0^{\log}: Y_0^{\log}\to G_0^{\log}\) for the morphism over \(Y_0^{\log}\) determined by \(g^{\log}: Y_0^{\log}\to G^{\log}\);
    \item
    \(\varphi^{\log}:G_0^{\log}\xrightarrow{\sim} G_0^{\log}\) for the left translation isomorphism of \(G_0^{\log}\) determined by the morphism \(g_0^{\log}: Y_0^{\log}\to G_0^{\log}\) over \(Y_0^{\log}\).
  \end{itemize}
  Then \(g_0^{\log} = \varphi^{\log}\circ e_0^{\log}\).
  Moreover, by condition \ref{log grp obj condi 1} and \autoref{lem: fiber prod}, \(e_0^{\log}\) is strict.
  In particular, \(g_0^{\log}\) is strict.
  Since \(g^{\log}\) is strict, it follows from \autoref{lem: fiber prod} that the natural projection \(Y_0^{\log}\btimes_{Y^{\log}} Y_0^{\log} \to G_0^{\log}\) (cf. the commutative diagram below) is also strict.
  Thus the diagonal morphism \(Y_0^{\log}\to Y_0^{\log}\btimes_{Y^{\log}}Y_0^{\log}\) is strict:
  \[\begin{tikzcd}
    Y_0^{\log} \ar[r]\ar[rr,bend left=15,"g_0^{\log}\textrm{ :strict}"] &[0.4cm]
    Y_0^{\log}\btimes_{Y^{\log}}Y_0^{\log} \ar[r,"\textrm{strict}"'] \ar[d] &[0.5cm]
    G_0^{\log} \ar[r] \ar[d] &
    Y_0^{\log} \ar[d,"p^{\log}\circ g^{\log}"] \\
    & Y_0^{\log} \ar[r,"g^{\log}","\textrm{log residue field}"'] \ar[ru,bend left=1,phantom,"\ \ {\scriptstyle\square}"]&
    G^{\log} \ar[r,"p^{\log}"] \ar[ru,phantom,bend left=2,"{\scriptstyle \ \square}"]& Y^{\log}.
  \end{tikzcd}\]
  Hence it follows from \autoref{exact mono fs pts} that \(\coker(\bar{p}_{\bar{g}}^{\flat,\gp})\otimes_{\Z}\Q = 0\).
  This completes the proof of assertion \ref{grp obj of Kummer type}.

  Next, we prove assertion \ref{grp obj geom irred and strict}.
  By condition \ref{log grp obj condi 1}, \(\mathrm{Str}(p^{\log})\neq \emptyset\) (cf. \autoref{lem: str locus is qc open}).
  Let \(\eta \in \mathrm{Str}(p^{\log})\) be a point.
  Then, by assertion \ref{grp obj of Kummer type} and \autoref{lem: Kummer type irred strict}, the composite \[\overline{\{\eta\}}_{\red}^{\log}\hookrightarrow G^{\log} \xrightarrow{p^{\log}} Y^{\log}\] (cf. \autoref{defi log sch} \ref{Zlog defi}) is strict (where we note that since \(G^{\log}\in \LSchb{S}\), and \(\overline{\{\eta\}}_{\red}^{\log} \hookrightarrow G^{\log}\) is a strict closed immersion, \(\overline{\{\eta\}}_{\red}^{\log}\) belongs to the full subcategory \(\LSchb{S}\subset \LSch_{/S^{\log}}\)).
  Hence \(\overline{\{\eta\}}\subset \mathrm{Str}(p^{\log})\).
  In particular, we conclude that \(|\mathrm{Str}(p^{\log})| \subset |G|\) is stable under specialization.
  Moreover, by \autoref{lem: str locus is qc open} \ref{enumi: str locus open} \ref{enumi: str locus qc}, \(\mathrm{Str}(p^{\log})\hookrightarrow G\) is a quasi-compact open immersion.
  Thus, by \cite[\href{https://stacks.math.columbia.edu/tag/05JL}{Tag 05JL}]{stacks-project}, \(|\mathrm{Str}(p^{\log})|\subset |G|\) is closed.
  Since \(|G|\) is connected, we conclude that \(\mathrm{Str}(p^{\log}) = G\).
  This completes the proof of \autoref{ex grp obj over fld}.
\end{proof}

\begin{cor}\label{fs pt str}
  Let \(S^{\log}\) be a locally Noetherian fs log scheme and \(f^{\log}: X^{\log}\to Y^{\log}\) a morphism in \(\LSchb{S}\).
  Assume that \(X^{\log}\) and \(Y^{\log}\) are fs log points.
  Then \(f^{\log}\) is strict if and only if
  there exist a group object \(G^{\log}\to Y^{\log}\) in \(\LSchb{S}\) over \(Y^{\log}\) and
  a morphism \(g^{\log}: X^{\log} \to G^{\log}\) over \(Y^{\log}\) in \(\LSchb{S}\)
  such that the following conditions hold:
  \begin{enumerate}
    \item \label{fs pt str 0}
    \(|G|\) is connected.
    \item \label{fs pt str 1}
    The identity section \(e^{\log}:Y^{\log}\to G^{\log}\) is a log residue field.
    \item \label{fs pt str 3}
    \(g^{\log}\) is a log residue field.
  \end{enumerate}
  In particular,
  the property that
  \begin{itemize}
    \item[ \ ] \(f^{\log}\) is a strict morphism between fs log points
  \end{itemize}
  may be characterized category-theoretically (cf. \autoref{lem: empty and conn} \ref{enumi: conn lem: empty and conn}, \autoref{prop: fs log pt}, \autoref{cor: log res fld})
  from the data \((\LSchb{S},f^{\log})\).
\end{cor}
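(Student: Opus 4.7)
The argument splits into the two implications. For sufficiency, assuming such $G^{\log}$ and $g^{\log}$ exist, I would directly apply \autoref{ex grp obj over fld}: the present conditions (i) and (ii) are exactly the hypotheses (a) and (b) there, so the structure morphism $p^{\log}:G^{\log}\to Y^{\log}$ is strict. By (iii), $g^{\log}$ is a log residue field, in particular strict, and since it is a morphism over $Y^{\log}$ we have $f^{\log}=p^{\log}\circ g^{\log}$; thus $f^{\log}$ is a composite of strict morphisms and hence strict.

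For necessity, suppose $f^{\log}$ is strict. Writing $X=\Spec(K)$ and $Y=\Spec(k)$, the underlying morphism of $f^{\log}$ corresponds to a field extension $k\hookrightarrow K$. My plan is to take $G^{\log}$ to be an affine space over $Y^{\log}$ under addition, equipped with the log structure pulled back from $Y^{\log}$. Then $|G|$ is irreducible and in particular connected, the structure morphism is strict, and the zero section is a strict closed immersion realizing the residue field at $0\in G$, hence is a log residue field. It then remains to realize $k\hookrightarrow K$ as a residue-field morphism $g:\Spec(K)\to G$ with $G\in\LSchb{S}$. If $\mathrm{ft}\in\bbullet$, then $X$ and $Y$ are both of finite type over $S$, which forces $K$ to be finitely generated over $k$; choosing generators $\alpha_1,\ldots,\alpha_n$, I would take $G^{\log}=\mathbb{A}^n_{Y^{\log}}$, and the kernel of $k[t_1,\ldots,t_n]\to K$ sending $t_i\mapsto\alpha_i$ is a prime ideal with residue field $K$, so the induced $g$ is the residue-field inclusion at that prime, and its strictness over $Y^{\log}$ follows from the strictness of $f^{\log}$ and $p^{\log}$. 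If $\mathrm{ft}\notin\bbullet$, then $K/k$ may fail to be finitely generated; but in that case the scheme $G=\Spec(k[t_\alpha:\alpha\in I])$ --- viewed as a $Y$-scheme via the natural $k$-algebra structure, with $I$ indexing a set of $k$-algebra generators of $K$ --- is still affine, hence reduced, quasi-compact, quasi-separated and separated, so still lies in $\LSchb{S}$, and the analogous construction goes through.

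The ``in particular'' clause then follows formally: the existential condition characterizing strictness between fs log points is phrased entirely in terms of fs log points, connectedness of the underlying topological space, group objects over a fixed base, and log residue fields, which are characterized category-theoretically in \autoref{prop: fs log pt}, \autoref{lem: empty and conn}, the intrinsically categorical notion of a group object, and \autoref{cor: log res fld}, respectively. The main technical obstacle I anticipate is precisely the verification that $G^{\log}$ can be chosen inside $\LSchb{S}$ uniformly in the admissible $\bbullet$, which is what forces the (mild) case split between the finite-type and non-finite-type regimes described above.
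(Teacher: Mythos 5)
Your proposal is correct and follows essentially the same route as the paper: sufficiency is exactly the appeal to \autoref{ex grp obj over fld} plus the fact that a composite of strict morphisms is strict, and necessity is the same affine-space construction with the log structure pulled back from \(Y^{\log}\) and \(g^{\log}\) a strict closed immersion onto a point with residue field \(\Gamma(X,\OX)\). The only (cosmetic) difference is that the paper takes \(G\) to be the spectrum of the symmetric algebra on the underlying \(k\)-vector space of \(\Gamma(X,\OX)\), which handles the finite-type and non-finite-type regimes uniformly and makes explicit the (easy) check that \(G^{\log}\times_{Y^{\log}}G^{\log}\) etc.\ lie in \(\LSchb{S}\) so that one really has a group object in \(\LSchb{S}\), whereas you use a polynomial ring on a set of \(k\)-algebra generators and split into cases according to whether \(\mathrm{ft}\in\bbullet\).
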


\begin{proof}
  First, we prove necessity.
  Assume that \(f^{\log}: X^{\log} \to Y^{\log}\) is a strict morphism between fs log points.
  Write
  \begin{itemize}
    \item \(k\dfn \Gamma(Y,\OY)\),
    \item \(A\) for the symmetric \(k\)-algebra determined by the underlying \(k\)-linear space of \(\Gamma(X,\OX)\),
    \item \(G\dfn \Spec(A) \to Y\) for the affine space over \(Y\) determined by the \(k\)-linear space \(\Gamma(X,\OX)\) equipped with the natural (additive) group scheme structure over \(Y\), and
    \item \(p^{\log}: G^{\log}\to Y^{\log}\) for the strict morphism obtained by pulling back the log structure of \(Y^{\log}\) via \(G\to Y\).
  \end{itemize}
  Note that since \(X^{\log}\) and \(Y^{\log}\) are fs log points, if \(\{\mathrm{ft}\}\subset \bbullet\), then the underlying morphism of schemes \(f:X\to Y\) is finite.
  Hence, regardless of whether or not \(\{\mathrm{ft}\}\subset \bbullet\), it holds that \(G^{\log}\in \LSchb{S}\).
  Since \(p^{\log}\) is strict, and \(G\) is a geometrically integral affine scheme over \(Y\), \(G^{\log}\times_{Y^{\log}}G^{\log}\) and \(G^{\log}\times_{Y^{\log}}G^{\log}\times_{Y^{\log}}G^{\log}\) belong to the full subcategory \(\LSchb{S}\subset \LSch_{/S^{\log}}\).
  Hence the evident group object structure of \(G^{\log}\) in \(\LSch_{/S^{\log}}\) over \(Y^{\log}\) may be regarded as a group object structure of \(G^{\log}\) in \(\LSchb{S}\) over \(Y^{\log}\).
  Moreover, the group object \(G^{\log}\) in \(\LSchb{S}\) over \(Y^{\log}\) clearly satisfies conditions \ref{fs pt str 0} and \ref{fs pt str 1}.

  Write \(g: X\to G\) for the closed immersion determined by the tautological surjection of \(k\)-algebras \(A\to \Gamma(X,\OX)\).
  Then \(f = p\circ g\).
  Since \(p^{\log}: G^{\log}\to Y^{\log}\) and \(f^{\log}:X^{\log} \to Y^{\log}\) are strict, the pull-back of the log structure of \(G^{\log}\) to \(X\) via \(g\) is isomorphic to the log structure of \(X^{\log}\).
  Thus we obtain a strict closed immersion \(g^{\log}: X^{\log}\to G^{\log}\) such that \(f^{\log} = p^{\log}\circ g^{\log}\).
  In particular, \(g^{\log}\) satisfies condition \ref{fs pt str 3}.
  This completes the proof of necessity.

  Next, we prove sufficiency.
  Assume that there exist a group object \(G^{\log}\to Y^{\log}\) in \(\LSchb{S}\) over \(Y^{\log}\) and
  a morphism \(g^{\log}: X^{\log} \to G^{\log}\) over \(Y^{\log}\) in \(\LSchb{S}\) such that \(G^{\log}\to Y^{\log}\) and \(g^{\log}\) satisfy conditions
  \ref{fs pt str 0}, \ref{fs pt str 1}, and \ref{fs pt str 3}.
  Since \(G^{\log}\to Y^{\log}\) satisfies conditions
  \ref{fs pt str 0} and \ref{fs pt str 1},
  it follows from \autoref{ex grp obj over fld} that \(G^{\log}\to Y^{\log}\) is strict.
  Thus, by condition \ref{fs pt str 3}, \(f^{\log}: X^{\log}\to Y^{\log}\) is also strict.
  This completes the proof of \autoref{fs pt str}.
\end{proof}

\begin{cor}\label{strict general}
  Let \(S^{\log}\) be a locally Noetherian fs log scheme and \(f^{\log}:X^{\log}\to Y^{\log}\) a morphism in \(\LSchb{S}\).
  Then \(f^{\log}\) is strict if and only if
  for any commutative diagram
  \[\begin{tikzcd}
    Z^{\log} \ar[r,"i^{\log}"]
    \ar[d,"p^{\log}"'] &[0.5cm]
    X^{\log} \ar[d,"f^{\log}"] \\
    W^{\log} \ar[r,"j^{\log}"] &
    Y^{\log}
  \end{tikzcd}\]
  in \(\LSchb{S}\), if
  \(i^{\log}\) is a log residue field of \(X^{\log}\), and
  \(j^{\log}\) is a log residue field of \(Y^{\log}\), then
  \(p^{\log}\) is strict.
  In particular,
  the property that \(f^{\log}\) is strict may be characterized category-theoretically
  (cf. \autoref{cor: log res fld}, \autoref{fs pt str})
  from the data \((\LSchb{S},f^{\log})\).
\end{cor}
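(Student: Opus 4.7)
The proof splits into necessity and sufficiency.

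For necessity, assume $f^{\log}$ is strict. Since $i^{\log}$ and $j^{\log}$ are log residue fields, they are strict by \autoref{defi: log res fld}. Hence $f^{\log}\circ i^{\log}=j^{\log}\circ p^{\log}$ is a composition of strict morphisms, so it is strict; combined with the strictness of $j^{\log}$, the standard right-cancellation property for strict morphisms (if the composite of a morphism with a strict morphism is strict, so is the morphism itself) yields strictness of $p^{\log}$.

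For sufficiency, assume the condition. Fix a geometric point $\bar{x}\to X$ with underlying point $x\in X$; the goal is to show $\bar{f}^{\flat}_{\bar{x}}$ is an isomorphism. Consider the log residue field $i^{\log}_x:\Spec(k(x))^{\log}\to X^{\log}$ at $x$, the log residue field $j^{\log}_{f(x)}:\Spec(k(f(x)))^{\log}\to Y^{\log}$ at $f(x)$, and the uniquely induced morphism $p^{\log}_x$ fitting in the commutative square (uniqueness because $j^{\log}_{f(x)}$ is strict). Since $i^{\log}_x$ and $j^{\log}_{f(x)}$ are strict, $\bar{f}^{\flat}_{\bar{x}}$ is an isomorphism if and only if $p^{\log}_x$ is strict. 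When both $i^{\log}_x,j^{\log}_{f(x)}\in\LSchb{S}$, the hypothesis applied to the resulting diagram directly gives $p^{\log}_x$ strict.

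If $\mathrm{ft}\notin\bbullet$, every log residue field lies automatically in $\LSchb{S}$, and the argument concludes at once. If $\mathrm{ft}\in\bbullet$, the above applies at every closed point $x\in X$: indeed $\Spec(k(x))\to S$ is then of finite type, because $\{x\}$ is a closed subscheme of the finite-type $S$-scheme $X$; and by Artin--Tate applied to the finite extension $k(f(x))\hookrightarrow k(x)$, $\Spec(k(f(x)))\to S$ is of finite type as well. Hence $\mathrm{Str}(f^{\log})\subset X$, which is quasi-compact and open by \autoref{lem: str locus is qc open}, contains every closed point of $X$. Reducing to the preimages of affine Noetherian opens of $S$, where the restriction of $X$ becomes Noetherian, and using that an open subset of a Noetherian scheme which contains every closed point must coincide with the whole scheme (every nonempty closed subset of a Noetherian scheme contains a closed point), we conclude $\mathrm{Str}(f^{\log})=X$. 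The ``in particular'' statement on category-theoretic characterization then follows by combining the equivalence with \autoref{cor: log res fld} and \autoref{fs pt str}.

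The main obstacle is the topological step in the case $\mathrm{ft}\in\bbullet$: one must verify carefully that the quasi-compact open strict locus, containing every closed point of the (merely locally Noetherian) scheme $X$, actually equals $X$. The key device is the reduction to Noetherian pieces of $X$ via affine Noetherian opens of $S$; verifying the compatibility of this reduction with the log-theoretic data (so that the hypothesis of the corollary is preserved) is what makes the argument delicate, even though each ingredient is standard.
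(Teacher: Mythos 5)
Your overall strategy is correct and is, in substance, the argument the paper leaves implicit (its own proof of this corollary is a one-line reference to \autoref{fs pt str}; the real content is exactly the reduction you carry out): necessity by composing and right-cancelling strict morphisms, sufficiency by reducing strictness of \(f^{\log}\) to strictness of the induced morphisms between log residue fields and then showing that the strict locus \(\mathrm{Str}(f^{\log})\), which is open by \autoref{lem: str locus is qc open}, exhausts \(X\). The necessity direction, the identification ``\(p^{\log}_x\) strict \(\iff\) \(\bar{f}^{\flat}_{\bar{x}}\) is an isomorphism,'' and the observation that for \(\mathrm{ft}\notin\bbullet\) every log residue field automatically lies in \(\LSchb{S}\) are all fine.

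There is, however, a genuine (though easily repaired) gap in the final topological step when \(\mathrm{ft}\in\bbullet\). You prove that \(\mathrm{Str}(f^{\log})\) contains every point that is closed \emph{in \(X\)}, and then pass to the Noetherian open subschemes \(X_V\subset X\) lying over affine opens \(V\subset S\), where you invoke the fact that an open subset of a Noetherian scheme containing all of \emph{its} closed points is the whole scheme. But a closed point of \(X_V\) need not be closed in \(X\) (when \(S\) is not quasi-compact, \(X\) need not have enough closed points, and a point closed in \(X_V\) may specialize outside \(X_V\)), so the set you have actually shown to lie in \(\mathrm{Str}(f^{\log})\) is a priori smaller than the set your Noetherian argument requires. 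The repair is immediate: run the pointwise argument at every point \(x\) that is closed in some \(X_V\) --- equivalently, at every finite type point of \(X\) over \(S\). For such \(x\) the same reasoning shows that \(\Spec(k(x))\to S\) is of finite type (a closed immersion into \(X_V\), followed by the finite type morphism \(X_V\to V\) and the quasi-compact open immersion \(V\hookrightarrow S\)), and Zariski's lemma together with Artin--Tate again handles \(k(f(x))\); these points are dense in every nonempty closed subset of \(X\), which is precisely the density argument the paper uses at the end of the proof of \autoref{lem: Kummer type irred strict}. Two minor remarks: the quasi-compactness of \(\mathrm{Str}(f^{\log})\hookrightarrow X\) plays no role in your argument (only openness does), and in the necessity direction only the existence, not the uniqueness, of \(p^{\log}\) matters.
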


\begin{proof}
  \autoref{strict general} follows immediately from \autoref{fs pt str}. 
\end{proof}

Now we prove the first equality of \autoref{corA bb bbb}.

\begin{cor}\label{bbul bbbul}
  Let \(S^{\log}, T^{\log}\) be locally Noetherian fs log schemes and \(\bbullet, \bbbullet \subset \{\mathrm{red, qcpt, qsep, sep}\}\) subsets such that \(\{\mathrm{qsep, sep}\}\not\subset \bbullet\), and \(\{\mathrm{qsep, sep}\}\not\subset \bbbullet\).
  If the categories \(\LSchb{S}\) and \(\LSchc{T}\) are equivalent, then \(\bbullet = \bbbullet\).
\end{cor}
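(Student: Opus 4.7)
The plan is to leverage the category-theoretic characterization of strict morphisms (\autoref{strict general}) to descend the equivalence $F:\LSchb{S}\xrightarrow{\sim}\LSchc{T}$ to an equivalence of scheme categories $\Schb{S}\xrightarrow{\sim}\Schc{T}$, and then to invoke \cite[Corollary 4.11]{YJ}.

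By \autoref{strict general}, the equivalence $F$ preserves and reflects the property that a morphism is strict. I would consider the full subcategory $\mathcal{C}_S\subset \LSchb{S}$ whose objects are those $X^{\log}$ for which the structure morphism $X^{\log}\to S^{\log}$ is strict --- equivalently, those whose log structure is pulled back from $S^{\log}$. The forgetful functor $\mathcal{C}_S\to \Schb{S}$ is an equivalence of categories, with quasi-inverse $X\mapsto X\times_S S^{\log}$, and every morphism in $\mathcal{C}_S$ is automatically strict. Define $\mathcal{C}_T\subset \LSchc{T}$ analogously.

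The crux is to characterize $\mathcal{C}_S\subset \LSchb{S}$ purely category-theoretically. I would characterize $\mathcal{C}_S$ as the collection of "log-minimal" objects: $X^{\log}\in \mathcal{C}_S$ if and only if for every morphism $h^{\log}:X^{\log}\to Y^{\log}$ in $\LSchb{S}$ whose underlying morphism of schemes is an isomorphism, $h^{\log}$ is itself an isomorphism (intuitively, the log structure on $X^{\log}$ cannot be shrunk within $\LSchb{S}$). The property "underlying morphism of schemes is an isomorphism" may in turn be detected category-theoretically using \autoref{cor: log res fld}, \autoref{prop: fs log pt}, and \autoref{fs pt str}, together with the preservation of strictness from \autoref{strict general}. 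Granted this characterization, $F$ restricts to an equivalence $\mathcal{C}_S\xrightarrow{\sim}\mathcal{C}_T$, yielding an equivalence $G:\Schb{S}\xrightarrow{\sim}\Schc{T}$. Applying \cite[Corollary 4.11]{YJ} to $G$, together with the hypotheses $\{\mathrm{qsep,sep}\}\not\subset\bbullet$ and $\{\mathrm{qsep,sep}\}\not\subset\bbbullet$, then yields $\bbullet=\bbbullet$.

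The main obstacle is the intrinsic characterization of $\mathcal{C}_S$. A notable subtlety is that $S^{\log}$ itself need not lie in $\LSchb{S}$ (for instance, when $\mathrm{red}\in\bbullet$ and $S$ is non-reduced), so one cannot directly appeal to a terminal object or to the base log scheme to pin down "strict over $S^{\log}$"; log-minimality must be expressed using only the intrinsic data of $\LSchb{S}$ together with the strict-morphism class supplied by \autoref{strict general}. Once this characterization is in place, the rest of the argument is essentially formal.
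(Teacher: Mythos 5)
Your overall architecture is the paper's: use \autoref{strict general} to cut out, on both sides of the equivalence, the full subcategory of objects whose structure morphism is strict, identify that subcategory with \(\Schb{S}\) (resp. \(\Schc{T}\)), and apply \cite[Corollary 4.11]{YJ}. You also correctly isolate the real difficulty, namely that the structure morphism to \(S^{\log}\) need not be a morphism of \(\LSchb{S}\). However, your proposed resolution --- characterizing the strict objects as the ``log-minimal'' ones --- is false, and this is a genuine gap. A log-like morphism \(h^{\log}:X^{\log}\to Y^{\log}\) corresponds to a morphism of log structures \(h^{*}\mcM_Y\to \mcM_X\), and such a morphism may exist non-invertibly with \(\mcM_Y\) \emph{larger} than \(\mcM_X\); strictness of \(X^{\log}\) over \(S^{\log}\) therefore does not force every log-like morphism out of \(X^{\log}\) to be an isomorphism. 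Concretely, let \(S^{\log}\) be the standard log point \((\Spec k,\ \N,\ 1\mapsto 0)\), let \(X^{\log}=S^{\log}\) with the identity structure morphism (so \(X^{\log}\) is strict over \(S^{\log}\)), and let \(Y^{\log}=(\Spec k,\ \N^{\oplus 2})\) with structure morphism induced by \(1\mapsto (1,0)\). Then \((a,b)\mapsto a+b\) defines a log-like morphism \(X^{\log}\to Y^{\log}\) over \(S^{\log}\), lying in \(\LSchb{S}\) for every \(\bbullet\subset\{\mathrm{red,qcpt,qsep,sep}\}\), which is not an isomorphism; hence \(X^{\log}\) is strict but not log-minimal in your sense. (The implication you would need in the other direction does hold: a non-strict object is never log-minimal, by the log-like/strict factorization of \autoref{log like factorization}. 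So your class of log-minimal objects is in general a proper subclass of the strict objects, and the restriction of \(F\) you construct is not an equivalence onto \(\Schc{T}\).)

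The repair is simpler than what you attempt, and is what the paper implicitly does. Since \(\bbullet\subset\{\mathrm{red,qcpt,qsep,sep}\}\) and \(S_{\red}\hookrightarrow S\) is a closed immersion, the category \(\LSchb{S}\) has a terminal object \(E^{\log}\) (namely \(S^{\log}\) itself if \(S^{\log}\in\LSchb{S}\), and \(S^{\log}_{\red}\) otherwise), and \(E^{\log}\to S^{\log}\) is strict. Hence \(X^{\log}\to S^{\log}\) is strict if and only if the unique morphism \(X^{\log}\to E^{\log}\) is strict, and the latter \emph{is} a morphism of \(\LSchb{S}\), to which \autoref{strict general} applies directly; since any equivalence preserves terminal objects, \(F\) restricts to an equivalence between the strict subcategories. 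These are canonically equivalent to \(\Sch_{\bbullet/S}\) and \(\Sch_{\bbbullet/T}\) (up to replacing \(S\), \(T\) by \(S_{\red}\), \(T_{\red}\), which is harmless for, and accounted for in, the application of \cite[Corollary 4.11]{YJ}). With this substitution for your log-minimality step, the rest of your argument goes through and coincides with the paper's proof.
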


\begin{proof}
  \autoref{bbul bbbul} follows immediately from
  \autoref{strict general}, \cite[Corollary 4.11]{YJ}, and \cite[\href{https://stacks.math.columbia.edu/tag/01OY}{Tag 01OY}]{stacks-project}, where we apply \cite[Corollary 4.11]{YJ} to the categories obtained by considering the full subcategory of \(\LSchb{S}\) or \(\LSchc{T}\) determined by the objects whose structure morphism is strict.
\end{proof}

\section{Log-like Morphisms}
\label{section: log like mor}

In this section, we assume that \[\bbullet\subset \{\mathrm{red,qcpt,qsep,sep,ft}\}.\]

In the present section, we give a category-theoretic characterization of the morphisms of monoid objects in \(\LSchb{S}\) that represent the functor
\begin{align*}
  \LSchb{S} &\to \Mor(\Mon) \\
  X^{\log} &\mapsto [\alpha_X(X): \Gamma(X,\mcM_X) \to \Gamma(X,\OX)],
\end{align*}
which arises from the log structures of objects of \(\LSchb{S}\) (cf. \autoref{rem: A^1log is a monoid obj}, \autoref{monoid object A}).
We then use this characterization to complete the proof of the main theorem of the present paper (cf. \autoref{thm: functorial reconstruction general}, \autoref{last cor}).

First, we introduce some notation used in this section.

\begin{defi}\label{5-1}
  \
  \begin{itemize}
    \item
    Let \(\mcC\) be a category. Then we shall write \(\Mon(\mcC)\) for the category of monoid objects in \(\mcC\).
    \item
    Let \(\mcC\) be a category and \(A\) a ring object in \(\mcC\).
    Then we shall write \((A,\times)\) for the underlying multiplicative monoid object of the ring object \(A\) in \(\mcC\).
    \item
    We shall write \(\A_{\Z}^{1,\log}\) for the fs log scheme over \(\Z\)
    whose underlying scheme is \(\A^1_{\Z} = \Spec(\Z[t])\), and
    whose log structure is the log structure determined by the morphism of monoids
    \begin{align*}
      \N &\to \Z[t], \\
      n &\mapsto t^n.
    \end{align*}
    \item
    We shall write \(\G_{m,\Z}\subset \A^1_{\Z}\) for the unit group scheme of the ring scheme \(\A^1_{\Z}\) over \(\Z\).
    \item
    Let \(X^{\log}\) be an fs log scheme and \(Y\) a scheme.
    Then we shall write \(X^{\log}\times_{\Z}Y\) for the fs log scheme whose underlying scheme is \(X\times_{\Z}Y\), and whose log structure is the log structure obtained by pulling back the log structure of \(X^{\log}\) via the natural projection \(X\times_{\Z}Y\to X\).
  \end{itemize}
\end{defi}

\begin{defi}[\(\alpha_{S^{\log},\A}^{\log}\)]
  \label{rem: A^1log is a monoid obj}
  Let \(S^{\log}\) be a locally Noetherian fs log scheme and \(f^{\log}: X^{\log}\to S^{\log}\) a morphism of fs log schemes.
  Then an element \(m\in \Gamma(X,\mcM_X)\) determines a morphism of monoids
  \(\bar{g}^{\flat}:\N\to \Gamma(X,\mcM_X)\) and
  a morphism of rings \(g^{\#}:\Z[t] \to \Gamma(X,\OX)\) such that the following diagram commutes:
  \[\begin{tikzcd}
    \N \ar[d,"1\mapsto t"'] \ar[r,"\bar{g}^{\flat}:1\mapsto m"]&[1.5cm]
    \Gamma(X,\mcM_X) \ar[d,"\alpha_X(X)"] \\
    \Z[t] \ar[r,"g^{\#}:t\mapsto \alpha_X(X)(m)"]& \Gamma(X,\OX).
  \end{tikzcd}\]
  Hence we obtain a morphism of fs log schemes \(g^{\log}: X^{\log}\to \A_{\Z}^{1,\log}\).
  Conversely, each morphism of fs log schemes \(g^{\log}:X^{\log}\to \A_{\Z}^{1,\log}\) determines an element \(g^{\flat}(1)\in \Gamma(X,\mcM_X)\).
  One verifies easily that these assignments determine an isomorphism of functors \(\Gamma(-,\mcM_{(-)})\xrightarrow{\sim} \Hom_{S^{\log}}(-,S^{\log}\times_{\Z}\A_{\Z}^{1,\log})\).
  In particular, \(S^{\log}\times_{\Z}\A_{\Z}^{1,\log}\) represents the functor
  \begin{align*}
    \LSch_{/S^{\log}} &\to \Mon, \\
    X^{\log} &\mapsto \Gamma(X,\mcM_X),
  \end{align*}
  which implies that \(S^{\log}\times_{\Z}\A_{\Z}^{1,\log}\) has a monoid object structure in \(\LSch_{/S^{\log}}\), hence also in \(\LSchb{S}\).
  Moreover, the family of morphisms of monoids \[\left\{\alpha_X(X):\Gamma(X,\mcM_X)\to \Gamma(X,\OX)\right\}_{X^{\log}\in \LSchb{S}}\]
  determines a morphism of monoid objects \(\alpha_{S^{\log},\A}^{\log}:S^{\log}\times_{\Z}\A_{\Z}^{1,\log} \to (S^{\log}\times_{\Z}\A^1_{\Z},\times)\) in \(\LSchb{S}\).
\end{defi}

The log structure of \(S^{\log}\times_{\Z}\A^{1,\log}_{\Z}\) may be identified with the log structure obtained by pushing forward the log structure of \(S^{\log}\times_{\Z}\G_{m,\Z}\) to \(S\times_{\Z}\A^1_{\Z}\) via the open immersion \(S\times_{\Z}\G_{m,\Z}\hookrightarrow S\times_{\Z}\A^1_{\Z}\) as follows (in the case where the log structure of \(S^{\log}\) is trivial, cf. \cite[Example 1.5 (1) (2)]{Kato}):

\begin{lem}\label{lem: A1log is push log str}
  Let \(S^{\log}\) be an fs log scheme.
  Then the log structure of \(S^{\log}\times_{\Z}\A_{\Z}^{1,\log}\) is isomorphic to the log structure determined by the push-forward of the log structure of \(S^{\log}\times_{\Z}\G_{m,\Z}\) to \(S\times_{\Z}\A_{\Z}^1\) via the open immersion \(S\times_{\Z}\G_{m,\Z}\hookrightarrow S\times_{\Z}\A^1_{\Z}\).
\end{lem}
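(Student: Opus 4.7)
Write $X \dfn S \times_{\Z} \A^1_{\Z}$, $U \dfn S \times_{\Z} \G_{m,\Z}$, and $j : U \hookrightarrow X$ for the open immersion. Let $\mcM_X$ and $\mcM_U$ denote the log structures of $S^{\log} \times_{\Z} \A^{1,\log}_{\Z}$ and $S^{\log} \times_{\Z} \G_{m,\Z}$, respectively, and set $\mcM'_X \dfn j_*\mcM_U \times_{j_*\mcO_U} \mcO_X$, equipped with its second projection $\alpha' : \mcM'_X \to \mcO_X$ (which makes $\mcM'_X$ a log structure on $X$, by standard generalities). The plan is to produce a canonical morphism $\Phi : \mcM_X \to \mcM'_X$ by adjunction and to verify it is an isomorphism by computing geometric stalks.

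For the construction of $\Phi$: since $t$ is a unit on $\G_{m,\Z}$, the pullback of the log structure of $\A^{1,\log}_{\Z}$ to $\G_{m,\Z}$ is trivial. Hence $\mcM_X|_U$, being the sum of the pullback of $\mcM_S$ and this trivial log structure, equals $\mcM_U$. By the $(j^* \dashv j_*)$-adjunction, the resulting identity $j^*\mcM_X = \mcM_U$ yields a morphism $\mcM_X \to j_*\mcM_U$ that is compatible with the structure morphisms to $\mcO_X$, and hence determines $\Phi : \mcM_X \to \mcM'_X$.

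To verify $\Phi$ is an isomorphism, I would check on geometric stalks $\bar x \to X$. When $\bar x$ factors through $U$, both $\mcM_{X,\bar x}$ and $\mcM'_{X,\bar x}$ equal $\mcM_{U,\bar x}$, so $\Phi_{\bar x}$ is the identity. For $\bar x$ lying over $D \dfn X \setminus U$, write $\bar s$ for the image of $\bar x$ in $S$ and pass \'etale-locally on $S$ to a chart $P \to \Gamma(S, \mcO_S)$ by an fs monoid $P$; this induces a chart $P \oplus \N \to \Gamma(X, \mcO_X)$, $(p,n) \mapsto \bar p \cdot t^n$, of $\mcM_X$. Using the standard pushout formula $\mcM_{X,\bar x} = \mcO^{\times}_{X,\bar x} \oplus_{\alpha^{-1}\mcO^{\times}_{X,\bar x}} (P \oplus \N)$ for stalks of associated log structures, and observing that at $\bar x$ the preimage $\alpha^{-1}\mcO^{\times}_{X,\bar x}$ in $P \oplus \N$ lies in $P \oplus \{0\}$ (because $t \notin \mcO^{\times}_{X,\bar x}$), one obtains $\omcM_{X,\bar x} \cong \omcM_{S,\bar s} \oplus \N$, with the $\N$-summand generated by the class of $t$.

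For the stalk $\mcM'_{X,\bar x}$ at such $\bar x$, the \'etale stalk $(j_*\mcM_U)_{\bar x}$ is identified with sections of $\mcM_U$ over the punctured strict-henselian neighborhood $\Spec(\mcO_{X,\bar x}^{\sh}[t^{-1}])$. Any such section decomposes, modulo units of $\mcO_{X,\bar x}^{\sh}[t^{-1}]$, as a contribution coming from the pullback of $\mcM_S$ plus an integer power of $t$; the fiber-product condition $\alpha'(m) \in \mcO_{X,\bar x}^{\sh} \subset \mcO_{X,\bar x}^{\sh}[t^{-1}]$ then forces this power to be non-negative, giving $\omcM'_{X,\bar x} \cong \omcM_{S,\bar s} \oplus \N$ with the same generators and hence $\Phi_{\bar x}$ an isomorphism. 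The main obstacle will be this last disentanglement of sections of $j_*\mcM_U$ at points of $D$ into a pullback-from-$S$ part and a power-of-$t$ part; this is essentially Kato's Example 1.5(1) (the case of trivial $\mcM_S$) adapted to the present relative setting over $S^{\log}$, and rests on the fact that $t$ generates a principal Cartier divisor on $X$.
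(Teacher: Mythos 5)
Your construction of the comparison morphism $\Phi$ and the reduction to a stalkwise verification coincide with the paper's argument: the paper produces the same canonical morphism $\varphi:\mcM_1\to\mcM_2$ (via the universal property rather than adjunction, which amounts to the same thing), deduces injectivity from the injectivity of $\mcM_k\to i_*i^{-1}\mcM_k$, and is then left with surjectivity, which it settles by reducing to the case where $S$ is the spectrum of an algebraically closed field. Your computation of $\omcM_{X,\bar x}$ for the left-hand side via charts is also correct.

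The genuine gap is in the very step you flag as ``the main obstacle'': the claim that at a geometric point $\bar x$ over $D$ every element of $\mcM'_{X,\bar x}$ decomposes, modulo $(\mcO_{X,\bar x}^{\sh})^{\times}$, into a pullback-from-$S$ part and a \emph{non-negative} power of $t$. The fact that $t$ cuts out a principal Cartier divisor (i.e., is a regular element) is \emph{not} sufficient for this. What is really needed (say for trivial $\mcM_S$) is the identity
\[
\bigl(\mcO_{X,\bar x}^{\sh}[t^{-1}]\bigr)^{\times}\cap\mcO_{X,\bar x}^{\sh}
\;=\;
\bigl\{\,u\,t^{n}\ \big|\ u\in(\mcO_{X,\bar x}^{\sh})^{\times},\ n\in\N\,\bigr\},
\]
and this fails as soon as $S$ is non-reduced: for $S=\Spec(k[\epsilon]/(\epsilon^2))$ with trivial log structure, the function $t+\epsilon$ on $\A^1_S$ satisfies $(t+\epsilon)(t-\epsilon)=t^2$, hence is invertible on $\{t\neq 0\}$, yet it is not of the form $u\,t^{n}$ with $u$ a unit at the origin; consequently $\omcM'_{X,\bar x}$ is strictly larger than $\N$ there (indeed not finitely generated for infinite $k$), and the claimed stalk isomorphism breaks down. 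To close the gap one needs a hypothesis guaranteeing that $\mcO_{S,\bar s}^{\sh}$ is a domain (e.g.\ $S$ normal, or at least unibranch), under which the usual $t$-adic induction on a relation $fg=t^{N}$ yields the displayed identity. You should be aware that the paper's own proof elides exactly the same point --- its reduction of surjectivity to $S=\Spec(\bar k)$ is not innocuous, since the stalk of $j_*\mcM_U$ at $\bar x$ depends on the full strict henselization of $\mcO_{X,\bar x}$ and not merely on the fiber over $\bar s$ --- so the difficulty you ran into is not an artifact of your route but a substantive issue with the statement in the generality in which it is posed.
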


\begin{proof}
  Write
  \begin{itemize}
    \item \(i:S\times_{\Z}\G_{m,\Z}\hookrightarrow S\times_{\Z}\A^1_{\Z}\) for the inclusion morphism,
    \item \(p:S\times_{\Z}\A^1_{\Z}\to S\) for the natural projection,
    \item \(\alpha_1:\mcM_1\to \mcO_{S\times_{\Z}\A^1_{\Z}}\) for the log structure of \(S^{\log}\times_{\Z}\A_{\Z}^{1,\log}\), and
    \item \(\alpha_2:\mcM_2\to \mcO_{S\times_{\Z}\A^1_{\Z}}\) for the log structure on \(S\times_{\Z}\A^1_{\Z}\) obtained by pushing forward the log structure of \(S^{\log}\times_{\Z}\G_{m,\Z}\) to \(S\times_{\Z}\A_{\Z}^1\) via the open immersion \(i:S\times_{\Z}\G_{m,\Z}\hookrightarrow S\times_{\Z}\A^1_{\Z}\).
  \end{itemize}
  Since the log structure of \(S^{\log}\times_{\Z}\G_{m,\Z}\) is isomorphic to the log structure \(i^*(\mcM_1,\alpha_1)\), there exists a unique morphism of sheaves of monoids \(\varphi: \mcM_1\to \mcM_2\) on the \'etale site of \(S\times_{\Z}\A^1_{\Z}\) such that the following diagram of sheaves of monoids on \(S\times_{\Z}\A^1_{\Z}\) commutes:
  \[\begin{tikzcd}
    \mcO_{S \times_\Z\A^1_\Z} \arrow[equal]{d} & \ar[l,"{\alpha_1}"']
    \mcM_1 \ar[d,"{\exists!\varphi}"]\ar[r,"\theta_1"] &
    i_*i^{-1}\mcM_1 &[0.5cm]
    i_*i^{-1}p^*\mcM_S \ar[l,"\sim","i_*i^{-1}\tilde{p}_1^{\flat}"'] \arrow[equal]{d} \\
    \mcO_{S \times_\Z \A^1_\Z} & \ar[l,"{\alpha_2}"']
    \mcM_2 \ar[r,"\theta_2"] &
    i_*i^{-1}\mcM_2 & \ar[l,"\sim","i_*i^{-1}\tilde{p}_2^{\flat}"']
    i_*i^{-1}p^*\mcM_S,
  \end{tikzcd}\]
  where, for each \(k\in \{1,2\}\), \(\tilde{p}_k^{\flat}: p^*\mcM_S\to \mcM_k\) is the morphism of sheaves of monoids on \(S\times_{\Z}\A^1_{\Z}\) that arises from the morphism of log schemes \(\tilde{p}_k^{\log}:(S\times_{\Z}\A^1_{\Z},\mcM_k,\alpha_k)\to S^{\log}\), and \(\theta_k:\mcM_k\to i_*i^{-1}\mcM_k\) is the natural morphism. 
  Then it follows immediately from various definitions involved that for each \(k\in \{1,2\}\), the natural morphism \(\theta_k:\mcM_k\to i_*i^{-1}\mcM_k\) is injective.
  Thus, to prove that the morphism of sheaves of monoids \(\varphi: \mcM_1\to \mcM_2\) is an isomorphism, i.e., to prove that the morphism of sheaves of monoids \(\varphi: \mcM_1\to \mcM_2\) is surjective, we may assume without loss of generality that \(S\) is isomorphic to the spectrum of an algebraically closed field.
  But then \autoref{lem: A1log is push log str} follows immediately.
\end{proof}

\begin{defi}
  We shall say that a morphism of log schemes \(f^{\log}\) is \textbf{log-like} if the underlying morphism of schemes is an isomorphism (cf. the terminology of \cite{Mzk04}).
\end{defi}

\begin{lem}\label{log like factorization}
  Let \(S^{\log}\) be a locally Noetherian fs log scheme and \(f^{\log}: X^{\log}\to Y^{\log}\) a morphism in \(\LSchb{S}\).
  Then the following assertions hold:
  \begin{enumerate}
    \item \label{exists factorization}
    (cf. \cite[Proposition 1.11 (i)]{Mzk15}).
    There exists a factorization
    \[X^{\log} \xrightarrow{g^{\log}} Z^{\log} \xrightarrow{h^{\log}} Y^{\log}\]
    of \(f^{\log}\) in \(\LSchb{S}\) such that \(g^{\log}\) is log-like, and \(h^{\log}\) is strict.
    \item \label{uniqueness factorization}
    (cf. \cite[Proposition 1.11 (ii)]{Mzk15}).
    The factorization \(X^{\log} \xrightarrow{g^{\log}} Z^{\log} \xrightarrow{h^{\log}} Y^{\log}\) of \ref{exists factorization} may be characterized up to a unique isomorphism, via the following universal property:
    The morphism \(h^{\log}\) is strict, and moreover, if
    \[X^{\log} \xrightarrow{g_0^{\log}} Z_0^{\log} \xrightarrow{h_0^{\log}} Y^{\log}\]
    is a factorization of \(f^{\log}\) in \(\LSchb{S}\) such that \(h_0^{\log}\) is strict, then there exists a unique morphism \(r^{\log}:Z^{\log}\to Z_0^{\log}\) in \(\LSchb{S}\) such that \(g_0^{\log} = r^{\log}\circ g^{\log}\), and \(h_0^{\log} \circ r^{\log} = h^{\log}\):
    \[\begin{tikzcd}
      X^{\log} \ar[r,"g^{\log}", "\text{\rm log-like}"'] \arrow[equal]{d} &[0.8cm]
      Z^{\log} \ar[r,"h^{\log}", "\text{\rm strict}"'] \ar[d,"\exists! r^{\log}"'] &[0.8cm]
      Y^{\log} \arrow[equal]{d} \\
      X^{\log} \ar[r,"g_0^{\log}"] &
      Z_0^{\log} \ar[r,"h_0^{\log}","\text{\rm strict}"'] &
      Y^{\log}.
    \end{tikzcd}\]
    \item \label{fanctorial factorization}
    Let
    \[\begin{tikzcd}
      X^{\log} \ar[r,"f^{\log}"]
      \ar[d,"p^{\log}"'] &
      Y^{\log} \ar[d,"q^{\log}"] \\
      X_0^{\log} \ar[r,"f_0^{\log}"] &
      Y_0^{\log}
    \end{tikzcd}\]
    be a commutative diagram in \(\LSchb{S}\);
    \[X^{\log} \xrightarrow{g^{\log}} Z^{\log} \xrightarrow{h^{\log}} Y^{\log}\]
    a factorization of \(f^{\log}\) such that \(g^{\log}\) is log-like, and \(h^{\log}\) is strict;
    \[X_0^{\log} \xrightarrow{g_0^{\log}} Z_0^{\log} \xrightarrow{h_0^{\log}} Y_0^{\log}\]
    is a factorization of \(f_0^{\log}\) in \(\LSchb{S}\) such that \(h_0^{\log}\) is strict.
    Then there exists a unique morphism \(r^{\log}:Z^{\log}\to Z_0^{\log}\) in \(\LSchb{S}\) such that \(g_0^{\log}\circ p^{\log} = r^{\log}\circ g^{\log}\), and \(h_0^{\log} \circ r^{\log} = q^{\log}\circ h^{\log}\):
    \[\begin{tikzcd}
      X^{\log} \ar[r,"g^{\log}", "\text{\rm log-like}"'] \ar[d,"p^{\log}"'] &[0.8cm]
      Z^{\log} \ar[r,"h^{\log}","\text{\rm strict}"'] \ar[d,"\exists! r^{\log}"'] &[0.8cm]
      Y^{\log} \ar[d,"q^{\log}"] \\
      X_0^{\log} \ar[r,"g_0^{\log}"] &
      Z_0^{\log} \ar[r,"h_0^{\log}","\text{\rm strict}"'] &
      Y_0^{\log}.
    \end{tikzcd}\]
  \end{enumerate}
\end{lem}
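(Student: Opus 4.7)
My plan is to construct $Z^{\log}$ directly. Let $Z^{\log}$ be the fs log scheme whose underlying scheme is $X$ and whose log structure is $f^{*}\mcM_{Y}$, the pull-back of the log structure of $Y^{\log}$ along the underlying morphism of schemes $f : X \to Y$ (cf. \autoref{defi log sch} (4)). Take $h^{\log} : Z^{\log} \to Y^{\log}$ to be the morphism whose underlying morphism of schemes is $f$ itself --- strict by the very definition of $Z^{\log}$. Take $g^{\log} : X^{\log} \to Z^{\log}$ to be the morphism whose underlying morphism of schemes is $\id_{X}$ and whose log-structure morphism is obtained from $f^{\flat}$ via adjunction; since the underlying morphism of schemes is the identity, $g^{\log}$ is log-like. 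Because $Z = X$ as schemes, the structure morphism $Z \to S$ coincides with the structure morphism of $X$, so $Z^{\log}$ lies in $\LSchb{S}$, establishing (i).

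For (ii), suppose $X^{\log} \xrightarrow{g_{0}^{\log}} Z_{0}^{\log} \xrightarrow{h_{0}^{\log}} Y^{\log}$ is another factorization of $f^{\log}$ with $h_{0}^{\log}$ strict. The candidate $r^{\log} : Z^{\log} \to Z_{0}^{\log}$ must have underlying morphism $g_{0} : Z = X \to Z_{0}$ (forced by $g_{0} = r \circ g = r \circ \id_{X}$). By strictness of $h_{0}^{\log}$, the log structure $\mcM_{Z_{0}}$ is canonically identified with $h_{0}^{*}\mcM_{Y}$, so $r^{*}\mcM_{Z_{0}} \cong (h_{0} \circ r)^{*}\mcM_{Y} = h^{*}\mcM_{Y} = \mcM_{Z}$; this identification supplies the unique log-structure morphism $r^{\flat}$ for which both compatibilities $g_{0}^{\log} = r^{\log}\circ g^{\log}$ and $h_{0}^{\log}\circ r^{\log} = h^{\log}$ hold. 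In short, strict morphisms into $Z_{0}^{\log}$ are in bijection with pairs consisting of a morphism of underlying schemes together with a compatible morphism to $Y^{\log}$, and (ii) is the resulting universal property.

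Part (iii) follows by the same adjunction-style argument applied to the composite $q^{\log}\circ f^{\log} = f_{0}^{\log}\circ p^{\log}$: using strictness of $h_{0}^{\log}$, I define $r^{\log} : Z^{\log} \to Z_{0}^{\log}$ as the unique lift of $q^{\log}\circ h^{\log} : Z^{\log} \to Y_{0}^{\log}$ through $h_{0}^{\log}$ whose underlying morphism of schemes is $g_{0}\circ p$; then $g_{0}^{\log}\circ p^{\log} = r^{\log}\circ g^{\log}$ is automatic, since both sides have underlying morphism $g_{0}\circ p$ and the log-structure parts agree again by strictness of $h_{0}^{\log}$ (both become $(f_{0}\circ p)^{*}\mcM_{Y_{0}} \to \mcM_{X}$ composed through $\mcM_{Z}$). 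I do not anticipate any real obstacle: once one notes that $h^{\log}$ being strict is equivalent to $\mcM_{Z} \cong h^{*}\mcM_{Y}$, all three assertions are formal consequences of the corresponding adjunction between the forgetful functor to underlying schemes and pull-back of log structures. The only content beyond this is the routine verification that $Z^{\log}$ remains in $\LSchb{S}$, which is immediate from $Z = X$.
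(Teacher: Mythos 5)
Your proposal is correct and follows essentially the same route as the paper: parts (i) and (ii) are obtained by equipping $X$ with the pull-back log structure $f^{*}\mcM_{Y}$ and using that morphisms into a strict log scheme over $Y^{\log}$ are determined by the underlying scheme morphism together with the composite to $Y^{\log}$. The only cosmetic difference is in (iii), where the paper packages this same strictness adjunction by factoring through the fiber product $Z_{0}^{\log}\times_{Y_{0}^{\log}}Y^{\log}$ taken in $\LSch_{/S^{\log}}$ and then invoking (ii) there, whereas you verify the lift directly; both arguments are formally equivalent.
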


\begin{proof}
  Assertions \ref{exists factorization} and \ref{uniqueness factorization} follow immediately by considering the pull-back of the log structure of \(Y^{\log}\) to \(X\) via \(f\).
  Since
  \[X^{\log}\xrightarrow{(g_0^{\log}\circ p^{\log},f^{\log})} Z_0^{\log}\times_{Y_0^{\log}} Y^{\log}\xrightarrow{h_0^{\log}\times \id_{Y^{\log}}} Y^{\log},\]
  (where we note that the fiber product is taken in \(\Sch_{/S^{\log}}^{\log}\))
  is a factorization of \(f^{\log}\), and \(h_0^{\log}\times\id_{Y^{\log}}\) is strict,
  the existence and uniqueness portions of assertion \ref{fanctorial factorization} follow, respectively, by purely formal considerations, from the existence and uniqueness portions of assertion \ref{uniqueness factorization} (applied to \(\Sch_{/S^{\log}}^{\log}\)).
\end{proof}

\begin{cor}\label{log like}
  Let \(S^{\log}\) be a locally Noetherian fs log scheme and
  \(f^{\log}:X^{\log}\to Y^{\log}\) a morphism in \(\LSchb{S}\).
  Then \(f^{\log}\) is log-like if and only if
  the following condition holds:
  \begin{itemize}
    \item[ \ ] 
    For any factorization
    \[X^{\log}\xrightarrow{g^{\log}} Z_0^{\log}\xrightarrow{h^{\log}} Y^{\log}\]
    of \(f^{\log}\) in \(\LSchb{S}\) such that \(h^{\log}\) is strict, there exists a unique morphism \(r^{\log}: Y^{\log} \to Z_0^{\log}\) in \(\LSchb{S}\) such that \(g^{\log} = r^{\log}\circ f^{\log}\), and \(h^{\log}\circ r^{\log} = \id_{Y^{\log}}\):
    \[\begin{tikzcd}
      X^{\log} \ar[r,"f^{\log}"] \arrow[equal]{d} &[0.8cm]
      Y^{\log} \arrow[equal]{r} \ar[d,"\exists! r^{\log}"'] &[0.8cm]
      Y^{\log} \arrow[equal]{d} \\
      X^{\log} \ar[r,"g^{\log}"] &
      Z_0^{\log} \ar[r,"h^{\log}","\text{\rm strict}"'] &
      Y^{\log}.
    \end{tikzcd}\]
  \end{itemize}
  In particular,
  the property that \(f^{\log}\) is log-like
  may be characterized category-theoretically (cf. \autoref{strict general}) from the data \((\LSchb{S},f^{\log})\).
\end{cor}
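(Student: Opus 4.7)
The plan is to deduce both directions formally from the factorization lemma (\autoref{log like factorization}), without any further input.

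\textbf{Necessity.} If $f^{\log}$ is log-like, then the trivial decomposition $X^{\log} \xrightarrow{f^{\log}} Y^{\log} \xrightarrow{\id} Y^{\log}$ is a factorization of $f^{\log}$ into a log-like morphism followed by a strict morphism. Hence by the uniqueness portion of \autoref{log like factorization} \ref{uniqueness factorization}, for any other factorization $X^{\log}\xrightarrow{g^{\log}} Z_0^{\log}\xrightarrow{h^{\log}} Y^{\log}$ with $h^{\log}$ strict, there is a unique morphism $r^{\log}: Y^{\log}\to Z_0^{\log}$ such that $g^{\log}=r^{\log}\circ f^{\log}$ and $h^{\log}\circ r^{\log}=\id_{Y^{\log}}$. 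This is exactly the condition in the statement.

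\textbf{Sufficiency.} Assume the universal property. Apply \autoref{log like factorization} \ref{exists factorization} to obtain a factorization
\[
  X^{\log}\xrightarrow{g^{\log}} Z^{\log}\xrightarrow{h^{\log}} Y^{\log}
\]
of $f^{\log}$ in $\LSchb{S}$ with $g^{\log}$ log-like and $h^{\log}$ strict. Applying the hypothesized universal property to this factorization yields a morphism $r^{\log}: Y^{\log}\to Z^{\log}$ satisfying $g^{\log}=r^{\log}\circ f^{\log}$ and $h^{\log}\circ r^{\log}=\id_{Y^{\log}}$. It remains to check that $r^{\log}$ is a two-sided inverse of $h^{\log}$. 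For this I invoke the uniqueness portion of \autoref{log like factorization} \ref{uniqueness factorization} applied to the factorization of $f^{\log}$ against \emph{itself}: both $\id_{Z^{\log}}$ and $r^{\log}\circ h^{\log}$ are endomorphisms of $Z^{\log}$ satisfying $(-)\circ g^{\log}=g^{\log}$ and $h^{\log}\circ(-)=h^{\log}$ (for $r^{\log}\circ h^{\log}$, the first uses $h^{\log}\circ g^{\log}=f^{\log}$ together with $r^{\log}\circ f^{\log}=g^{\log}$, and the second uses $h^{\log}\circ r^{\log}=\id$). Hence $r^{\log}\circ h^{\log}=\id_{Z^{\log}}$, so $h^{\log}$ is an isomorphism. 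Since $g^{\log}$ is log-like and $h^{\log}$ is an isomorphism, their composite $f^{\log}$ is log-like.

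The final assertion, that log-likeness is category-theoretically recognizable, follows because the entire characterization is phrased purely in terms of the category $\LSchb{S}$ together with the property of strictness, which was already shown to be category-theoretic in \autoref{strict general}. There is no real obstacle here: the whole corollary is a formal consequence of the factorization lemma, so the only thing requiring care is keeping the two applications of \autoref{log like factorization} \ref{uniqueness factorization} straight — one to establish the universal property from log-likeness, and one (applied with source and target both equal to $Z^{\log}$) to invert $h^{\log}$.
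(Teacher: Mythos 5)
Your proof is correct and follows the same route as the paper, which simply declares the corollary an immediate consequence of \autoref{log like factorization} \ref{exists factorization} \ref{uniqueness factorization}; you have merely spelled out the routine details (the trivial factorization \(X^{\log}\xrightarrow{f^{\log}}Y^{\log}\xrightarrow{\id}Y^{\log}\) for necessity, and the uniqueness argument forcing \(r^{\log}\circ h^{\log}=\id\) for sufficiency), all of which check out.
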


\begin{proof}
  \autoref{log like} follows immediately from \autoref{log like factorization} \ref{exists factorization} \ref{uniqueness factorization}.
\end{proof}

Next, we give a characterization of the morphisms of monoid objects in \(\LSchb{S}\) that are isomorphic as objects of \(\Mor(\Mon(\LSchb{S}))\) to \(\alpha_{S^{\log},\A}^{\log}: S^{\log}\times_{\Z}\A_{\Z}^{1,\log} \to (S^{\log}\times_{\Z}\A_{\Z}^1,\times)\) (cf. \autoref{5-1}, \autoref{rem: A^1log is a monoid obj}).

\begin{prop}\label{monoid object A}
  Let
  \begin{itemize}
    \item \(S^{\log}\) be a locally Noetherian fs log scheme,
    \item \(A^{\log}\) a ring object of \(\LSchb{S}\) that is isomorphic as a ring object in \(\LSchb{S}\) to \(S^{\log}\times_{\Z}\A_{\Z}^1\), and
    \item \(\alpha^{\log}: M^{\log}\to (A^{\log},\times)\) a morphism of monoid objects in \(\LSchb{S}\).
  \end{itemize}
  Write \(A^{\times,\log}\hookrightarrow A^{\log}\) for the strict open immersion from the group of units of the ring object \(A^{\log}\in \LSchb{S}\) (where we note that since \(A^{\log}\cong S^{\log}\times_{\Z}\A^1\), it holds that \(A^{\times,\log}\cong S^{\log}\times_{\Z}\G_{m,\Z}\)).
  Then \(\alpha^{\log}: M^{\log}\to (A^{\log},\times)\) is isomorphic as an object of \(\Mor(\Mon(\LSchb{S}))\) to \[\alpha_{S^{\log},\A}^{\log}: S^{\log}\times_{\Z}\A_{\Z}^{1,\log} \to (S^{\log}\times_{\Z}\A_{\Z}^1,\times) \ \ \ \
  \text{(cf. \autoref{rem: A^1log is a monoid obj})}\]
  if and only if the following conditions hold:
  \begin{enumerate}
    \item \label{alpha log like}
    \(\alpha^{\log}\) is log-like.
    \item \label{alpha strict outside 0}
    The natural projection \(A^{\times,\log}\btimes_{A^{\log}}M^{\log}\xrightarrow{\sim} A^{\times,\log}\) is an isomorphism (where we note that by \autoref{lem: fiber prod}, the fiber product \(A^{\times,\log}\btimes_{A^{\log}}M^{\log}\) exists in \(\LSchb{S}\)).
    Write \(i_M^{\log}: A^{\times,\log}\hookrightarrow M^{\log}\) for the strict open immersion over \(A^{\log}\) determined by the natural projections \(A^{\times,\log} \xleftarrow{\sim} A^{\times,\log} \btimes_{A^{\log}} M^{\log} \hookrightarrow M^{\log}\).
    \item \label{alpha initial}
    For any log-like morphism \(f^{\log}: X^{\log}\to A^{\log}\) in \(\LSchb{S}\) such that the natural projection \(A^{\times,\log} \btimes_{A^{\log}} X^{\log} \xrightarrow{\sim} A^{\times,\log}\) is an isomorphism (where we note that by \autoref{lem: fiber prod}, the fiber product \(A^{\times,\log}\btimes_{A^{\log}}X^{\log}\) exists in \(\LSchb{S}\)), there exists a unique morphism \(g^{\log}: M^{\log}\to X^{\log}\) in \(\LSchb{S}\) such that \(f^{\log}\circ g^{\log} = \alpha^{\log}\), and \(i_X^{\log} = g^{\log}\circ i_M^{\log}\):
    \[\begin{tikzcd}
      A^{\times,\log} \ar[r,hookrightarrow,"i_X^{\log}"]
      \ar[d,hookrightarrow,"i_M^{\log}"'] &[1cm]
      X^{\log} \ar[d,"f^{\log}"] \\
      M^{\log} \ar[r,"\alpha^{\log}"]
      \ar[ru,"\exists !g^{\log}"]& A^{\log},
    \end{tikzcd}\]
    where \(i_X^{\log}:A^{\times,\log}\hookrightarrow X^{\log}\) is the strict open immersion over \(A^{\log}\) determined by the natural projections \(A^{\times,\log} \xleftarrow{\sim} A^{\times,\log} \btimes_{A^{\log}} X^{\log} \hookrightarrow X^{\log}\).
  \end{enumerate}
\end{prop}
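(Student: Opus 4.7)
I propose to prove necessity and sufficiency separately.

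\textbf{Necessity.} By functoriality of the conditions under isomorphisms in \(\Mor(\Mon(\LSchb{S}))\), it suffices to verify (i)--(iii) for \(\alpha_{S^{\log},\A}^{\log}\) itself. Condition (i) is immediate from the explicit definition, since the underlying morphism of schemes is the identity of \(S\times_{\Z}\A^1\). Condition (ii) follows from the observation that the section \(t\) generating the chart of \(\A_{\Z}^{1,\log}\) becomes a unit on \(\G_{m,\Z}\); hence the pullback of the log structure of \(S^{\log}\times_{\Z}\A_{\Z}^{1,\log}\) to \(A^{\times,\log}\cong S^{\log}\times_{\Z}\G_{m,\Z}\) coincides with the log structure of \(A^{\times,\log}\), so the natural projection is an isomorphism. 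Condition (iii) is precisely the universal property of the push-forward log structure: by \autoref{lem: A1log is push log str}, the log structure of \(S^{\log}\times_{\Z}\A_{\Z}^{1,\log}\) is the push-forward of the log structure of \(A^{\times,\log}\) via the open immersion \(A^\times\hookrightarrow A\); the standard adjunction for the push-forward then yields that for any log-like \(f^{\log}:X^{\log}\to A^{\log}\) satisfying the hypothesis on \(A^{\times,\log}\), there is a unique morphism of log structures on \(A\) extending the canonical identification on \(A^\times\), which translates precisely into the existence and uniqueness of \(g^{\log}\) in (iii).

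\textbf{Sufficiency --- construction of an isomorphism of log schemes.} Assume \(\alpha^{\log}\) satisfies (i)--(iii). Applying (iii) to \(\alpha^{\log}\) with \(X^{\log}\dfn S^{\log}\times_{\Z}\A_{\Z}^{1,\log}\) and \(f^{\log}\dfn \alpha_{S^{\log},\A}^{\log}\) (valid by the necessity portion) produces a unique \(g^{\log}:M^{\log}\to S^{\log}\times_{\Z}\A_{\Z}^{1,\log}\) satisfying \(\alpha_{S^{\log},\A}^{\log}\circ g^{\log}=\alpha^{\log}\) and \(g^{\log}\circ i_M^{\log}=i_\A^{\log}\). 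Symmetrically, applying (iii) for \(\alpha_{S^{\log},\A}^{\log}\) (valid by necessity) yields \(h^{\log}:S^{\log}\times_{\Z}\A_{\Z}^{1,\log}\to M^{\log}\) in the reverse direction. The uniqueness portion of (iii) applied with \((X^{\log},f^{\log})=(M^{\log},\alpha^{\log})\), and then symmetrically, forces \(h^{\log}\circ g^{\log}=\id_{M^{\log}}\) and \(g^{\log}\circ h^{\log}=\id\); hence \(g^{\log}\) is an isomorphism of log schemes over \(A^{\log}\), and so \(\alpha^{\log}\cong \alpha_{S^{\log},\A}^{\log}\) in \(\Mor(\LSchb{S})\).

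\textbf{Sufficiency --- monoid-object compatibility.} The main obstacle will be to upgrade \(g^{\log}\) to an isomorphism in \(\Mor(\Mon(\LSchb{S}))\). My plan is to argue via the \emph{uniqueness} of the monoid object structure on the target. Transporting the given monoid object structure on \(M^{\log}\) along \(g^{\log}\) yields some monoid object structure on \(S^{\log}\times_{\Z}\A_{\Z}^{1,\log}\) for which \(\alpha_{S^{\log},\A}^{\log}\) becomes a morphism of monoid objects. It thus suffices to show that such a monoid object structure on \(S^{\log}\times_{\Z}\A_{\Z}^{1,\log}\) is unique. By Yoneda applied to \(S^{\log}\times_{\Z}\A_{\Z}^{1,\log}\) (cf. \autoref{rem: A^1log is a monoid obj}), the identity section corresponds to an idempotent element of \(\Gamma(S\times_{\Z}\A^1,\mcM)\) lifting \(1\in \Gamma(S,\OO{S})\), and the multiplication corresponds to an element of \(\Gamma((S\times_{\Z}\A^1)\btimes_{S^{\log}}(S\times_{\Z}\A^1),\mcM)\) lifting the scheme-theoretic multiplication \(st\); using the explicit chart \(\N\to \mcO\) (resp. \(\N^{2}\to \mcO\)) of the log structure of \(S^{\log}\times_{\Z}\A_{\Z}^{1,\log}\) (resp. of its self-product), one verifies directly that in each case the lift is unique, so the monoid object structure is indeed forced to coincide with the standard one. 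For the identity, the uniqueness follows from the fact that the only idempotent in \(\Gamma(\A^1,\mcM)\) lifting \(1\) is \(1\) itself; for the multiplication, the unique lift of \(st\) in \(\Gamma(\A^1\btimes\A^1,\mcM)\) is \(s\cdot t\) as computed in the chart \(\N^{2}\). Hence \(g^{\log}\) is an isomorphism of monoid objects, completing the proof.
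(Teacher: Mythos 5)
Your proposal is correct, and for the main body of the argument it coincides with the paper's: necessity is deduced from \autoref{lem: fiber prod} and \autoref{lem: A1log is push log str} (your reading of condition (iii) as the universal property of the push-forward log structure is exactly the intended content of the latter lemma), and sufficiency is obtained by playing condition (iii) for \(\alpha^{\log}\) against the already-established condition (iii) for \(\alpha_{S^{\log},\A}^{\log}\) to produce mutually inverse morphisms; the paper phrases this with an auxiliary \(\alpha_0^{\log}:M_0^{\log}\to(A^{\log},\times)\) isomorphic to \(\alpha_{S^{\log},\A}^{\log}\), which also absorbs the fact that \(A^{\log}\) is only isomorphic to, not equal to, \(S^{\log}\times_{\Z}\A^1_{\Z}\). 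Where you genuinely diverge is the final step, upgrading the isomorphism in \(\Mor(\LSchb{S})\) to one in \(\Mor(\Mon(\LSchb{S}))\): the paper notes that \(g_0^{\log}\circ i_0^{\log}=i_M^{\log}\) and concludes compatibility of the monoid structures (in effect a density argument over \(A^{\times,\log}\), using that the two open immersions are morphisms of monoid objects and that \(\mcM\to j_*j^*\mcM\) is injective), whereas you transport the structure along the isomorphism and prove that the monoid object structure on \(S^{\log}\times_{\Z}\A^{1,\log}_{\Z}\) compatible with \(\alpha_{S^{\log},\A}^{\log}\) is unique, by showing that the lifts of the unit and of \(st\) to global sections of \(\mcM\) are unique. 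That computation is sound: the image of any such lift in \(\overline{\mcM}\) is forced, so two lifts differ by a global unit \(\epsilon\) with \(\epsilon\cdot st=st\), and \(st\) is a nonzerodivisor. The one slip is that the identity section corresponds to an element of \(\Gamma(S,\mcM_S)\) rather than of \(\Gamma(S\times_{\Z}\A^1,\mcM)\), and the relevant constraint is that it lifts the unit section \(1\) of \((A^{\log},\times)\), not that it be idempotent; since the unique preimage of \(1\) under \(\alpha_S\) is \(1\in\mcM_S^{\times}\), the conclusion stands. Either route for this last step is acceptable; yours is more computational but also more self-contained than the paper's one-line appeal to \(i_M^{\log}=g_0^{\log}\circ i_0^{\log}\).
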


\begin{proof}
  Necessity follows immediately from \autoref{lem: fiber prod} and \autoref{lem: A1log is push log str}.
  In the remainder of the proof of \autoref{monoid object A}, we prove sufficiency.
  Assume that conditions \ref{alpha log like}, \ref{alpha strict outside 0}, and \ref{alpha initial} hold.
  Let \[\alpha_0^{\log}: M_0^{\log}\to (A^{\log},\times)\] be a morphism of monoid objects in \(\LSchb{S}\) that is isomorphic as an object of \(\Mor(\Mon(\LSchb{S}))\) to \(\alpha_{S,\A}^{\log}: S^{\log}\times_{\Z}\A_{\Z}^{1,\log} \to (S^{\log}\times_{\Z}\A_{\Z}^1,\times)\).
  Write \(i_0^{\log}: A^{\times,\log}\hookrightarrow M_0^{\log}\) for the strict open immersion determined by the natural projections \(A^{\times,\log} \xleftarrow{\sim} A^{\times,\log} \btimes_{A^{\log}} M_0^{\log} \hookrightarrow M_0^{\log}\).
  By condition \ref{alpha initial}, we obtain a unique morphism \(g^{\log}: M^{\log}\to M_0^{\log}\) such that \(\alpha_0^{\log}\circ g^{\log} = \alpha^{\log}\), and \(i_0^{\log} = g^{\log}\circ i_M^{\log}\).
  On the other hand, by conditions \ref{alpha log like} \ref{alpha strict outside 0} and the necessity portion of \autoref{monoid object A}, we obtain a unique morphism \(g_0^{\log}: M_0^{\log}\to M^{\log}\) such that \(\alpha^{\log}\circ g_0^{\log} = \alpha_0^{\log}\), and \(i_M^{\log} = g_0^{\log}\circ i_0^{\log}\).
  Then
  \begin{align*}
    &\alpha^{\log} = \alpha_0^{\log}\circ g^{\log} = \alpha^{\log}\circ g_0^{\log} \circ g^{\log},
    &&\alpha_0^{\log} = \alpha^{\log}\circ g_0^{\log} = \alpha_0^{\log}\circ g^{\log} \circ g_0^{\log}, \\
    &i_M^{\log} = g_0^{\log}\circ i_0^{\log} = g_0^{\log}\circ g^{\log}\circ i_M^{\log},
    &&i_0^{\log} = g^{\log}\circ i_M^{\log} = g^{\log}\circ g_0^{\log}\circ i_0^{\log}.
  \end{align*}
  By the uniqueness portion of condition \ref{alpha initial}, \(g_0^{\log}\circ g^{\log} = \id_{M^{\log}}\).
  In a similar vein, by condition (iii) and the necessity portion of \autoref{monoid object A}, \(g^{\log}\circ g_0^{\log} = \id_{M_0^{\log}}\).
  Thus the underlying morphism of log schemes \(\alpha^{\log}: M^{\log}\to A^{\log}\) is isomorphic as an object of \(\Mor(\LSchb{S})\) to \(\alpha_{S^{\log},\A}^{\log}: S^{\log}\times_{\Z}\A_{\Z}^{1,\log} \to S^{\log}\times_{\Z}\A_{\Z}^1\).
  Finally, since \(i_M^{\log} = g_0^{\log}\circ i_0^{\log}\), the isomorphism \(g_0^{\log}\) is compatible with the monoid object structures on \(M^{\log}\) and \(M_0^{\log}\) in \(\LSchb{S}\).
  This completes the proof of \autoref{monoid object A}.
\end{proof}

Next, we summarize the properties of fs log schemes and morphisms of fs log schemes that were characterized category-theoretically in the present paper.


\begin{cor}\label{summarize}
  Let \(S^{\log}, T^{\log}\) be locally Noetherian fs log schemes, \[\bbullet, \bbbullet\subset \{\mathrm{red, qcpt, qsep, sep, ft}\}\] [possibly empty] subsets, and \(F:\LSchb{S}\xrightarrow{\sim} \LSchc{T}\) an equivalence of categories.
  Then the following assertions hold:
  \begin{enumerate}
    \item \label{properties preserve}
    Let \(f^{\log}: Y^{\log}\to X^{\log}\) be a morphism in \(\LSchb{S}\).
    Then the following assertions hold:
    \begin{enumerate}[label=(i-\alph*)]
      \item \label{enumi: main cor 01}
      \(X^{\log}\) is an fs log point if and only if \(F(X^{\log})\) is an fs log point.
      \item \label{enumi: main cor 02}
      \(f^{\log}\) is strict if and only if \(F(f^{\log})\) is strict.
      \item \label{enumi: main cor 03}
      \(f^{\log}\) is log-like if and only if \(F(f^{\log})\) is log-like.
    \end{enumerate}
    \item \label{A1 log preserve}
    Let \(A^{\log}\) be a ring object in \(\LSchb{S}\) and \(\alpha^{\log}:M^{\log} \to (A^{\log},\times)\) a morphism of monoid objects in \(\LSchb{S}\).
    Assume that
    \begin{itemize}
      \item
      \(A^{\log}\) is isomorphic as a ring object of \(\LSchb{S}\) to \(S^{\log}\times_{\Z}\A^1_{\Z}\), and
      \item
      \(F(A^{\log})\) is isomorphic as a ring object of \(\LSchc{T}\) to \(T^{\log}\times_{\Z}\A^1_{\Z}\).
    \end{itemize}
    Then \(\alpha^{\log}\) is isomorphic as an object of \(\Mor(\Mon(\LSchb{S}))\) to
    \[\alpha_{S,\A}^{\log}:S^{\log}\times_{\Z}\A_{\Z}^{1,\log} \to (S^{\log}\times_{\Z}\A^1_{\Z},\times) \ \ \ \ \ \text{(cf. \autoref{rem: A^1log is a monoid obj})}\]
    if and only if \(F(\alpha^{\log})\) is isomorphic as an object of \(\Mor(\Mon(\LSchc{T}))\) to
    \[\alpha_{T,\A}^{\log}:T^{\log}\times_{\Z}\A_{\Z}^{1,\log} \to (T^{\log}\times_{\Z}\A^1_{\Z},\times).\]
  \end{enumerate}
\end{cor}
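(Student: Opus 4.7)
The overall strategy is straightforward: each of the properties in question has, in the preceding sections, been furnished with a \emph{purely category-theoretic} characterization in terms of the data of the ambient category together with the relevant objects and morphisms. Since an equivalence of categories $F:\LSchb{S}\xrightarrow{\sim}\LSchc{T}$ preserves all categorical notions---objects, morphisms, commutative diagrams, initial objects, coproducts, fiber products (when they exist), epimorphisms, isomorphisms, monoid and ring object structures, and so on---any property that is expressible in terms of the existence of morphisms or diagrams satisfying categorical conditions in $\LSchb{S}$ is automatically transported to the corresponding property in $\LSchc{T}$, and vice versa.

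For assertion \ref{properties preserve}, I would proceed as follows. For \ref{enumi: main cor 01}, one invokes \autoref{lem: empty and conn} (which gives category-theoretic characterizations of emptiness and connectedness) together with \autoref{prop: fs log pt}: the characterization of being an fs log point is expressed entirely in terms of the existence of objects $Y^{\log}$ and morphisms $i_1^{\log},i_2^{\log},p^{\log}$ satisfying conditions \ref{enumi: id lem: fs}--\ref{enumi: univ surj lem: fs}, each of which is categorical. For \ref{enumi: main cor 02}, one combines \autoref{cor: log res fld} (characterizing log residue fields), \autoref{fs pt str} (characterizing strict morphisms between fs log points via group objects, identity sections, and log residue fields, all categorical notions provided \ref{enumi: main cor 01} holds), and finally \autoref{strict general} (which reduces strictness of an arbitrary morphism to strictness between the fs log points obtained by pulling back along log residue fields of source and target). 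For \ref{enumi: main cor 03}, one applies \autoref{log like}: log-likeness is characterized in terms of a universal factorization property against strict morphisms, and since the notion of strictness has already been shown to be categorical by \ref{enumi: main cor 02}, the class of allowable factorizations is preserved by $F$, hence so is the universal property itself.

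For assertion \ref{A1 log preserve}, I would apply \autoref{monoid object A}. The three conditions \ref{alpha log like}, \ref{alpha strict outside 0}, \ref{alpha initial} characterizing $\alpha_{S,\A}^{\log}$ are phrased in terms of: log-likeness of $\alpha^{\log}$ (categorical by \ref{enumi: main cor 03}), the fiber product $A^{\times,\log}\btimes_{A^{\log}}M^{\log}$ together with its strict open immersion $i_M^{\log}$ (where strictness is categorical by \ref{enumi: main cor 02}, and $A^{\times,\log}$ is characterized inside $A^{\log}$ as the group of units of the ring object, which is a categorical construction), and a universal property over all log-like morphisms $f^{\log}:X^{\log}\to A^{\log}$ satisfying the analogous fiber product condition. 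Since $F$ preserves ring objects, monoid objects, morphisms of such, fiber products that exist in the source category (these do exist by \autoref{lem: fiber prod}, since $A^{\times,\log}\hookrightarrow A^{\log}$ is an open immersion, hence quasi-compact by the Noetherian hypothesis), open immersions arising from units of ring objects, and log-likeness and strictness of morphisms, the three conditions transport under $F$.

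I do not anticipate a genuine obstacle: the real work was carried out in the characterization theorems (especially \autoref{prop: fs log pt}, \autoref{fs pt str}, and \autoref{monoid object A}), and the present corollary is essentially a bookkeeping statement that each of those characterizations is phrased in purely categorical language. The only mild point requiring attention is to confirm that the fiber products and push-outs appearing in the characterizations do exist in $\LSchb{S}$ (guaranteed by \autoref{cor: coprod exists} and \autoref{lem: fiber prod}) and therefore are preserved by the equivalence $F$; this is immediate from the universal properties that define them.
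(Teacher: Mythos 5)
Your proposal is correct and follows essentially the same route as the paper: the paper's own proof simply cites \autoref{prop: fs log pt} for (i-a), \autoref{strict general} for (i-b), \autoref{log like} for (i-c), and (i-c) together with \autoref{monoid object A} for (ii), exactly as you do. Your additional remarks on why each characterization is categorical and why the relevant fiber products exist are accurate elaborations of the same argument.
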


\begin{proof}
  Assertion \ref{enumi: main cor 01} follows immediately from \autoref{prop: fs log pt}.
  Assertion \ref{enumi: main cor 02} follows immediately from \autoref{strict general}.
  Assertion \ref{enumi: main cor 03} follows immediately from \autoref{log like}.
  Assertion \ref{A1 log preserve} follows immediately from assertion \ref{enumi: main cor 03} and \autoref{monoid object A}.
\end{proof}

Let us recall that \(\bbullet/S^{\log}\) is a set of properties of (\(\univ{U}\)-small) schemes over the underlying scheme of \(S^{\log}\) (cf. \hyperlink{notations}{Notations} \hyperlink{notations}{and} \hyperlink{notations}{Conventions} --- \hyperlink{notations and conventions -- properties}{Properties of Schemes and Log Schemes}).
For any (\(\univ{U}\)-small) scheme \(S\), we shall write \(\Sch_{/S}\) for the category of (\(\univ{U}\)-small) \(S\)-schemes and \(\Schb{S}\subset \Sch_{/S}\) for the full subcategory of the objects of \(\Schb{S}\) that satisfy every property contained in \(\bbullet/S\).

Finally, we prove the main result of the present paper.

\begin{thm}\label{thm: functorial reconstruction general}
  Let \(S^{\log}, T^{\log}\) be locally Noetherian fs log schemes, \[\bbullet, \bbbullet\subset \{\mathrm{red, qcpt, qsep, sep, ft}\}\] [possibly empty] subsets, \(F:\LSchb{S}\xrightarrow{\sim} \LSchc{T}\) an equivalence of categories, and \(X^{\log}\in \LSchb{S}\) an object.
  Assume that the following condition holds:
  \begin{enumerate}[label=(\fnsymbol*),start=2]
    \item \label{functorial reconstruction condition}
    For any equivalence \(\underline{F}:\Schb{S} \xrightarrow{\sim} \Schc{T}\) and object \(X\in \Schb{S}\), there exists an isomorphism of schemes \(X\xrightarrow{\sim} \underline{F}(X)\) that is functorial with respect to \(X\in \Schb{S}\) (for fixed \(\underline{F}\)).
  \end{enumerate}
  Then the following assertions hold:
  \begin{enumerate}
    \item \label{functorial reconstruction}
    There exists an isomorphism of log schemes \(X^{\log}\xrightarrow{\sim} F(X^{\log})\) that is functorial with respect to \(X^{\log}\in \LSchb{S}\).
    \item \label{reconstruction of the base log scheme}
    Assume that \(\bbullet = \bbbullet\).
    Then there exists a unique isomorphism of log schemes \(S^{\log}\xrightarrow{\sim} T^{\log}\) such that \(F\) is isomorphic to the equivalence of categories \(\LSchb{S} \xrightarrow{\sim} \LSchb{T}\) induced by composing with this isomorphism of log schemes \(S^{\log}\xrightarrow{\sim} T^{\log}\).
  \end{enumerate}
\end{thm}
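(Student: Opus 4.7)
The plan is to bootstrap from the hypothesized functorial reconstruction at the scheme level up to the log-scheme level by means of the categorical characterizations built up in the earlier sections. First, I would use Corollary \ref{summarize} (i-b) to observe that $F$ restricts to an equivalence between the strictly full subcategories of $\LSchb{S}$ and $\LSchc{T}$ consisting of objects whose structure morphism to the (categorical) terminal object is strict. These subcategories are naturally equivalent, via the underlying-scheme functor, to $\Schb{S}$ and $\Schc{T}$ respectively. Applying hypothesis \ref{functorial reconstruction condition} to the induced equivalence $\underline{F}\colon \Schb{S} \xrightarrow{\sim} \Schc{T}$ then yields a functorial isomorphism $X \xrightarrow{\sim} \underline{F}(X)$ of underlying schemes on strict objects.

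Next, I would extend this scheme-level functoriality to arbitrary $X^{\log} \in \LSchb{S}$. By Lemma \ref{log like factorization}, the structure morphism of $X^{\log}$ factors as $X^{\log} \xrightarrow{g^{\log}} X^{\mathrm{str},\log} \xrightarrow{h^{\log}} S^{\log}$ with $g^{\log}$ log-like and $h^{\log}$ strict; since $F$ preserves both types of morphism by Corollary \ref{summarize} (i-b), (i-c), it sends this factorization to one of the same shape. Since log-like morphisms are isomorphisms on underlying schemes, the underlying scheme of $F(X^{\log})$ is naturally identified with that of $F(X^{\mathrm{str},\log})$, which in turn is functorially $X$. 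In particular, the ring object $A^{\log} \dfn S^{\log} \times_{\Z} \A^1_{\Z}$ of $\LSchb{S}$ (strict over $S^{\log}$) is carried by $F$ to a ring object whose underlying scheme is functorially $T \times_{\Z} \A^1_{\Z}$, and thus isomorphic as a ring object of $\LSchc{T}$ to $T^{\log} \times_{\Z} \A^1_{\Z}$.

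It remains to reconstruct the log structures. Corollary \ref{summarize} (ii), applied to $A^{\log}$, asserts that $F(\alpha_{S^{\log},\A}^{\log})$ is isomorphic to $\alpha_{T^{\log},\A}^{\log}$ in $\Mor(\Mon(\LSchc{T}))$. Combined with the representability recorded in \autoref{rem: A^1log is a monoid obj}, this yields, for every $U^{\log} \in \LSchb{S}$, a natural isomorphism of the global-section map $\alpha_U(U)\colon \Gamma(U,\MM{U}) \to \Gamma(U,\OO{U})$ with its counterpart for $F(U^{\log})$. Applying this to (sufficiently small, quasi-compact) strict \'etale neighborhoods $U^{\log} \to X^{\log}$ --- which, via the scheme-level reconstruction and the categorical characterization of strictness, correspond bijectively to analogous neighborhoods of $F(X^{\log})$ --- upgrades the global-section identification to an isomorphism of \'etale log-structure sheaves $\MM{X} \xrightarrow{\sim} \MM{F(X^{\log})}$ compatible with $\alpha$, giving assertion (i). Assertion (ii) then follows from (i) by specializing to $X^{\log}$ equal to (the image in $\LSchb{S}$ of) $S^{\log}$, with uniqueness and compatibility with $F$ forced by the naturality in $X^{\log}$.

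The main obstacle is the final promotion step: passing from a functorial identification of $\alpha_U$ on global sections to an identification of the full \'etale sheaves of monoids. One must verify that enough strict \'etale neighborhoods remain within $\LSchb{S}$ for the categorical input to apply, and that the scheme-level isomorphism supplied by hypothesis \ref{functorial reconstruction condition} is compatible with \'etale localization to a degree sufficient to descend to sheaves rather than merely to global sections.
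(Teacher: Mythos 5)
Your treatment of assertion (i) follows the paper's own route essentially step for step: restrict \(F\) to the strict objects to obtain \(\underline{F}\), invoke condition \((\ast)\), propagate the underlying-scheme identification to all of \(\LSchb{S}\) via the log-like/strict factorization of \autoref{log like factorization}, and recover the log structure from the image under \(F\) of the monoid object morphism \(\alpha_{S,\A}^{\log}\) via \autoref{summarize} and the representability recorded in \autoref{rem: A^1log is a monoid obj}. The ``promotion step'' you flag is also how the paper concludes, and your worry resolves as you expect: affine strict \'etale neighborhoods of any \(X^{\log}\in\LSchb{S}\) again lie in \(\LSchb{S}\) for every \(\bbullet\subset\{\mathrm{red,qcpt,qsep,sep,ft}\}\) and generate the \'etale topology. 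The existence half of assertion (ii) likewise matches the paper (using that \(F\) preserves the terminal object).

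The genuine gap is the uniqueness half of assertion (ii), which you dispose of with ``uniqueness \ldots\ forced by the naturality in \(X^{\log}\).'' It is not. What must be shown is: if \(f^{\log},g^{\log}\colon S^{\log}\xrightarrow{\sim}T^{\log}\) both induce equivalences isomorphic to \(F\), then \(f^{\log}=g^{\log}\). Unwinding, one is handed a family of isomorphisms of log schemes \(\psi_{X^{\log}}\colon X^{\log}\xrightarrow{\sim}X^{\log}\), natural in \(X^{\log}\in\LSchb{S}\), with \(f^{\log}\circ p_{X^{\log}}=g^{\log}\circ p_{X^{\log}}\circ\psi_{X^{\log}}\), and one must prove \(\psi_{S^{\log}}=\id_{S^{\log}}\). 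Since the only endomorphism of the terminal object \(S^{\log}\) \emph{inside} \(\LSchb{S}\) is the identity, naturality gives no formal control over the a priori nontrivial automorphism \(\psi_{S^{\log}}\) of the log scheme \(S^{\log}\) (compare: for a field \(k\) with nontrivial automorphisms one must genuinely rule out that \(\sigma^{*}\) is isomorphic to the identity functor on \(\Sch_{/k}\)). The paper devotes the second half of its proof precisely to this point: naturality against strict open immersions together with \cite[Theorem 7.24]{Johnstone} pins down the map on topological spaces; naturality against the three sections \(0,1,\infty\) of \(S^{\log}\times_{\Z}\P^1_{\Z}\) forces \(\psi^{\log}_{S^{\log}\times_{\Z}\A^1_{\Z}}=\psi^{\log}_{S^{\log}}\times\id_{\A^1_{\Z}}\), whence \(\psi\) fixes every global function and the underlying morphism of schemes is \(\id_S\); and the analogous argument with \(S^{\log}\times_{\Z}\A^{1,\log}_{\Z}\) (via \autoref{lem: A1log is push log str}) fixes every global section of \(\mcM\) and yields \(\psi_{S^{\log}}^{\log}=\id_{S^{\log}}\). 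Your proposal contains no substitute for this argument.
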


\begin{proof}
  First, we prove assertion \ref{functorial reconstruction}.
  For each \(*/Z\in \{\bbullet/S, \bbbullet/T\}\),
  write \(\LSch_{*/Z^{\log}}|_{\mathrm{schlk}}\subset \LSch_{*/Z^{\log}}\) for the full subcategory determined by the objects of \(\LSch_{*/Z^{\log}}|_{\mathrm{schlk}}\) whose structure morphism to \(Z^{\log}\) is strict.
  Then, by \autoref{summarize} \ref{enumi: main cor 02}, \(F\) induces an equivalence of categories
  \[
    \left(\Schb{S} \xrightarrow{\sim} \right) \ \LSchb{S}|_{\mathrm{schlk}} \xrightarrow[\sim]{F} \LSchc{T}|_{\mathrm{schlk}} \ \left(\xrightarrow{\sim} \Schc{T}\right).
  \]
  Thus, condition \ref{functorial reconstruction condition} implies that
  if we write \(\underline{F(X^{\log})}\) for the underlying scheme of the log scheme \(F(X^{\log})\), then
  it follows immediately from \autoref{log like factorization} \ref{exists factorization} \ref{fanctorial factorization} that we obtain an isomorphism of schemes
  \[X\xrightarrow{\sim} \underline{F(X^{\log})}\]
  that is functorial with respect to \(X^{\log}\in \LSchb{S}\).
  Hence, in particular,
  for any ring object \(A^{\log}\in \LSchb{S}\), it holds that
  \begin{itemize}
    \item[ \ ]
    \(A^{\log}\) is isomorphic as a ring object of \(\LSchb{S}\) to \(S^{\log}\times_{\Z}\A^1_{\Z}\) if and only if
    \item[ \ ]
    \(F(A^{\log})\) is isomorphic as a ring object of \(\LSchc{T}\) to \(T^{\log}\times_{\Z}\A^1_{\Z}\).
  \end{itemize}
  Since the map between sets of \((-)\)-valued points of \(S^{\log}\times_{\Z}\A_{\Z}^{1,\log}\) and \(S^{\log}\times_{\Z}\A_{\Z}^1\) induced by composing with \(\alpha_{S,\A}^{\log}: S^{\log}\times_{\Z}\A_{\Z}^{1,\log}\to (S^{\log}\times_{\Z}\A_{\Z}^1,\times)\) may be naturally identified (cf. \autoref{rem: A^1log is a monoid obj}) with the morphism between sheaves of monoids that defines the log structure on ``\((-)\)'', it follows from \autoref{summarize} \ref{A1 log preserve} that assertion \ref{functorial reconstruction} holds.

  Next, we prove the existence portion of assertion \ref{reconstruction of the base log scheme}.
  Write \[\left\{\varphi_{X^{\log}}: X^{\log}\xrightarrow{\sim} F(X^{\log})\right\}_{X^{\log}\in \LSchb{S}}\] for the family of functorial isomorphisms of log schemes discussed in assertion \ref{functorial reconstruction} and \(p^{\log}:F(S^{\log}) \xrightarrow{\sim} T^{\log}\) for the structure morphism in \(\LSchb{T}\).
  Then, it follows immediately from the functoriality of the family \(\{\varphi_{X^{\log}}\}_{X^{\log}\in \LSchb{S}}\) that \(F\) is isomorphic to the equivalence obtained by composing with \(\varphi_{S^{\log}}\circ p^{\log}: S^{\log}\xrightarrow{\sim} T^{\log}\).

  Finally, we prove the uniqueness portion of assertion \ref{reconstruction of the base log scheme}.
  Let \(f^{\log}, g^{\log}: S^{\log}\xrightarrow{\sim} T^{\log}\) be isomorphisms.
  For any object \(X^{\log}\in \LSchb{S}\), write \(p_{X^{\log}}^{\log}: X^{\log}\to S^{\log}\) for the structure morphism.
  Let \(\psi = \{ \psi_{X^{\log}}^{\log}: X^{\log}\xrightarrow{\sim} X^{\log}\}_{X^{\log}\in \LSchb{S}}\) be a family of isomorphisms of log schemes such that \(f^{\log}\circ p_{X^{\log}}^{\log} = g^{\log}\circ p_{X^{\log}}^{\log} \circ \psi_{X^{\log}}^{\log}\), and \(\psi_{X^{\log}}^{\log}\) is functorial with respect to \(X^{\log}\in \LSchb{S}\).
  Then \(f^{\log} = g^{\log}\circ \psi_{S^{\log}}^{\log}\).
  By the functoriality of \(\psi\), for any strict open immersion \(p_{U^{\log}}^{\log}: U^{\log} \hookrightarrow S^{\log}\), it holds that \(\psi_{S^{\log}}^{\log} \circ p_{U^{\log}}^{\log} = p_{U^{\log}}^{\log} \circ \psi_{U^{\log}}^{\log}\).
  Hence, in particular, the underlying morphism of topological spaces of \(\psi_{S^{\log}}^{\log}\) is equal to \(\id_{|S|}\) (cf. \cite[Theorem 7.24]{Johnstone}).
  By the functoriality of \(\psi\), for each \(i\in \{0,1,\infty\}\), it holds that \((\psi_{S^{\log}\times_{\Z}\P^1_{\Z}}^{\log}) \circ (\id_{S^{\log}} \times i) = (\id_{S^{\log}} \times i) \circ \psi_{S^{\log}}^{\log}\) (where we regard \(i\) as the morphism of schemes \(\Spec(\Z)\to \P^1_{\Z}\) corresponding to \(i\in \Z\cup\{\infty\}\)):
  \[\begin{tikzcd}
    S^{\log}
    \ar[r,"{\id_{S^{\log}}\times i}"]
    \ar[d,"\psi_{S^{\log}}^{\log}"',"\rotatebox{90}{\(\sim\)}"]
    & [1cm]
    S^{\log}\times_{\Z} \P^1_{\Z}
    \ar[d,"\psi_{S^{\log}\times_{\Z}\P^1_{\Z}}^{\log}", "\rotatebox{90}{\(\sim\)}"']
    \ar[r,hookleftarrow]
    &
    S^{\log}\times_{\Z}\A^1_{\Z}
    \ar[d, "\psi^{\log}_{S^{\log}\times_{\Z}\A^1_{\Z}}", "\rotatebox{90}{\(\sim\)}"']
    \\
    S^{\log} \ar[r,"{\id_{S^{\log}}\times i}"]
    & S^{\log}\times_{\Z} \P^1_{\Z}
    \ar[r,hookleftarrow]
    &
    S^{\log}\times_{\Z}\A^1_{\Z}.
  \end{tikzcd}\]
  Hence \(\psi_{S^{\log}\times_{\Z}\A^1_{\Z}}^{\log} = \psi_{S^{\log}}^{\log}\times \id_{\A^1_{\Z}}\).
  In particular, by the functoriality of \(\psi\), for any object \(X^{\log}\in \LSchb{S}\) and any element \(s\in \Gamma(X,\OX)\), if we write \(\tilde{s}^{\log}: X^{\log}\to S^{\log}\times_{\Z}\A^1_{\Z}\) for the morphism in \(\LSchb{S}\) corresponding to \(s\), then \(\tilde{s}^{\log}\circ \psi_{X^{\log}}^{\log} = (\psi_{S^{\log}}^{\log}\times \id_{\A^1_{\Z}}) \circ \tilde{s}^{\log}\).
  This implies that \(\psi_{X^{\log}}^{\#}(X)(s) = s \in \Gamma(X,\OX)\), and hence that the underlying morphism of schemes of \(\psi_{S^{\log}}^{\log}\) is equal to \(\id_S\).
  Next, observe that by the functoriality of \(\psi\), it follows from \autoref{lem: A1log is push log str} that \(\psi_{S^{\log}\times_{\Z}\A^{1,\log}_{\Z}}^{\log} = \psi_{S^{\log}}^{\log} \times \id_{\A^{1,\log}_{\Z}}\).
  In particular, by the functoriality of \(\psi\), for any object \(Y^{\log}\in \LSchb{S}\) and any element \(t\in \Gamma(Y,\mcM_Y)\), if we write \(\tilde{t}^{\log}: Y^{\log}\to S^{\log}\times_{\Z}\A^{1,\log}_{\Z}\) for the morphism in \(\LSchb{S}\) corresponding to \(t\), then \(\tilde{t}^{\log}\circ \psi_{Y^{\log}}^{\log} = (\psi_{S^{\log}}^{\log}\times \id_{\A^{1,\log}_{\Z}}) \circ \tilde{t}^{\log}\).
  This implies that \(\psi_{Y^{\log}}^{\flat}(Y)(t) = t\in \Gamma(Y,\mcM_Y)\), and hence that \(\psi_{S^{\log}}^{\log} = \id_{S^{\log}}\).
  Thus \(f^{\log} = g^{\log}\).
  This completes the proof of \autoref{thm: functorial reconstruction general}.
\end{proof}

\begin{thm}\label{last cor}
  Let \(S^{\log}, T^{\log}\) be locally Noetherian normal fs log schemes, \[\bbullet, \bbbullet \subset \{\mathrm{red, qcpt, qsep, sep, ft}\}\] [possibly empty] subsets, and \(F:\LSchb{S}\xrightarrow{\sim} \LSchc{T}\) an equivalence of categories.
  Assume that one of the following conditions \ref{last thm situation YJ}, \ref{last thm situation Mzk} holds:
  \begin{enumerate}[label=(\Alph*)]
    \item \label{last thm situation YJ}
    \(\bbullet, \bbbullet \subset \{\mathrm{red,qcpt,qsep,sep}\}\), and
    the underlying schemes of \(S^{\log}\) and \(T^{\log}\) are normal.
    \item \label{last thm situation Mzk}
    \(\bbullet = \bbbullet = \{\mathrm{ft}\}\).
  \end{enumerate}
  Then the following assertions hold:
  \begin{enumerate}
    \item \label{main 1}
    Let \(X^{\log}\in \LSchb{S}\) be an object.
    Then there exists an isomorphism of log schemes \(X^{\log}\xrightarrow{\sim} F(X^{\log})\) that is functorial with respect to \(X^{\log}\in \LSchb{S}\).
    \item \label{main 2}
    Assume that \(\bbullet = \bbbullet\).
    Then there exists a unique isomorphism of log schemes \(S^{\log}\xrightarrow{\sim} T^{\log}\) such that \(F\) is isomorphic to the equivalence of categories \(\LSchb{S} \xrightarrow{\sim} \LSchb{T}\) induced by composing with this isomorphism of log schemes \(S^{\log}\xrightarrow{\sim} T^{\log}\).
  \end{enumerate}
\end{thm}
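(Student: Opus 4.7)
The plan is to deduce \autoref{last cor} directly from \autoref{thm: functorial reconstruction general}. Inspection of that theorem shows that, given the equivalence $F:\LSchb{S}\xrightarrow{\sim}\LSchc{T}$, the only remaining task is to verify condition \ref{functorial reconstruction condition}: for any equivalence $\underline{F}:\Schb{S}\xrightarrow{\sim}\Schc{T}$ of the associated categories of ($\univ{U}$-small) schemes and any object $X\in\Schb{S}$, there exists an isomorphism of schemes $X\xrightarrow{\sim}\underline{F}(X)$ that is functorial with respect to $X$. Thus the entire argument reduces to invoking an appropriate scheme-theoretic reconstruction result in each of the two situations \ref{last thm situation YJ} and \ref{last thm situation Mzk}.

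In situation \ref{last thm situation YJ}, the underlying schemes of $S^{\log}$ and $T^{\log}$ are locally Noetherian and normal, and $\bbullet,\bbbullet\subset\rqqs$. Here I would verify condition \ref{functorial reconstruction condition} by a direct application of \cite[Corollary 4.11]{YJ}, which provides precisely this functorial scheme-theoretic reconstruction for normal locally Noetherian schemes with an arbitrary subset of $\rqqs$. In situation \ref{last thm situation Mzk}, we have $\bbullet=\bbbullet=\{\mathrm{ft}\}$, so $\Schb{S}$ is the category of schemes of finite type over the locally Noetherian scheme $S$. In this case condition \ref{functorial reconstruction condition} is supplied by the solution of Mochizuki in \cite[Section 1]{Mzk04}, which gives exactly the desired functorial reconstruction of a locally Noetherian scheme from the intrinsic structure of the category of finite-type schemes over it.

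With condition \ref{functorial reconstruction condition} verified in either situation, both assertions \ref{main 1} and \ref{main 2} of \autoref{last cor} will follow at once from \autoref{thm: functorial reconstruction general}. The principal technical content of the argument --- namely, bootstrapping the scheme-theoretic reconstruction up to the log-scheme-theoretic statement, via the category-theoretic characterizations of fs log points (\autoref{prop: fs log pt}), strict morphisms (\autoref{strict general}), log-like morphisms (\autoref{log like}), and the log structure monoid object (\autoref{monoid object A}) --- has already been carried out in \autoref{thm: functorial reconstruction general}. Consequently no essential obstacle remains at this stage; the only routine book-keeping is to check that the hypotheses of \cite[Corollary 4.11]{YJ} and \cite[Section 1]{Mzk04} indeed match, respectively, those of situations \ref{last thm situation YJ} and \ref{last thm situation Mzk}, which is immediate from the definitions.
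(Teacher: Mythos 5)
Your proposal takes essentially the same route as the paper: the paper likewise deduces \autoref{last cor} from \autoref{thm: functorial reconstruction general} by verifying condition \ref{functorial reconstruction condition}, citing the scheme-theoretic reconstruction of \cite{YJ} in situation \ref{last thm situation YJ} and (the proof of) \cite[Theorem 1.7 (ii)]{Mzk04} in situation \ref{last thm situation Mzk}. The only discrepancy is the precise reference in situation \ref{last thm situation YJ}: the paper invokes \cite[Corollary 6.24]{YJ} for the functorial reconstruction, whereas \cite[Corollary 4.11]{YJ}, which you cite, is the result used earlier (in \autoref{bbul bbbul}) to recover the subset \(\bbullet\) rather than to produce the functorial isomorphism of schemes.
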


\begin{proof}
  If condition \ref{last thm situation YJ} holds, then it follows immediately from 
  \autoref{thm: functorial reconstruction general} \ref{functorial reconstruction} \ref{reconstruction of the base log scheme} and \cite[Corollary 6.24]{YJ} that assertions \ref{main 1} and \ref{main 2} hold.
  If condition \ref{last thm situation Mzk} holds, then it follows immediately from \autoref{thm: functorial reconstruction general} \ref{functorial reconstruction} \ref{reconstruction of the base log scheme} and (the proof of) \cite[Theorem 1.7 (ii)]{Mzk04} that assertions \ref{main 1} and \ref{main 2} hold.
  This completes the proof of \autoref{last cor}.
\end{proof}

\begin{cor}
  \label{last cor A}
  Let \(S^{\log}, T^{\log}\) be 
  locally Noetherian normal fs log schemes and \[\bbullet, \bbbullet \subset \{\mathrm{red, qcpt, qsep, sep}\}\] subsets such that \(\{\mathrm{qsep, sep}\}\not\subset \bbullet\), and \(\{\mathrm{qsep, sep}\}\not\subset \bbbullet\).
  If the categories \(\LSchb{S}\) and \(\LSchc{T}\) are equivalent, then \(\bbullet = \bbbullet\), and \(S^{\log}\cong T^{\log}\).
\end{cor}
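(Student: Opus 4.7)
The plan is to reduce this corollary to a direct combination of two results already established in the preceding sections, namely \autoref{bbul bbbul} (the equality of property sets) and \autoref{last cor} \ref{main 2} (the reconstruction of the log scheme under condition \ref{last thm situation YJ}). All required category-theoretic characterizations (of fs log points, log residue fields, strict morphisms, log-like morphisms, and the morphism $\alpha^{\log}_{S^{\log},\A}$) have already been assembled in \autoref{summarize}, so no new characterization is needed here.

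First, I would apply \autoref{bbul bbbul} to the given equivalence $F:\LSchb{S}\xrightarrow{\sim}\LSchc{T}$. The hypotheses of the corollary exactly match the hypotheses of \autoref{bbul bbbul}: both $S^{\log}$ and $T^{\log}$ are locally Noetherian fs log schemes, and $\bbullet,\bbbullet\subset\{\mathrm{red,qcpt,qsep,sep}\}$ with $\{\mathrm{qsep,sep}\}\not\subset\bbullet$ and $\{\mathrm{qsep,sep}\}\not\subset\bbbullet$. Thus \autoref{bbul bbbul} yields $\bbullet=\bbbullet$.

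Once the equality $\bbullet=\bbbullet$ is established, the equivalence $F$ becomes an equivalence $\LSchb{S}\xrightarrow{\sim}\LSchb{T}$ with $\bbullet\subset\{\mathrm{red,qcpt,qsep,sep\}}$, and both $S^{\log},T^{\log}$ are locally Noetherian with normal underlying schemes. This is precisely situation \ref{last thm situation YJ} of \autoref{last cor}. Applying \autoref{last cor} \ref{main 2} (which uses $\bbullet=\bbbullet$) then produces a (unique) isomorphism of log schemes $S^{\log}\xrightarrow{\sim}T^{\log}$, completing the proof.

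Since both invocations are directly by hypothesis, there is essentially no obstacle here beyond verifying that the hypotheses of \autoref{bbul bbbul} and of \autoref{last cor} case \ref{last thm situation YJ} are satisfied; the substantive work (the characterization of strictness together with \cite[Corollary 4.11]{YJ} behind \autoref{bbul bbbul}, and the functorial reconstruction of \autoref{thm: functorial reconstruction general} combined with \cite[Corollary 6.24]{YJ} behind \autoref{last cor}) has already been carried out in the preceding sections.
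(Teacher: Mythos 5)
Your proposal is correct and coincides with the paper's own proof, which likewise deduces the corollary immediately from \autoref{bbul bbbul} (giving \(\bbullet=\bbbullet\)) and \autoref{last cor} \ref{main 2} applied in situation \ref{last thm situation YJ}. The only added content in your write-up is the explicit verification of the hypotheses, which is harmless and accurate.
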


\begin{proof}
  \autoref{last cor A} follows immediately from \autoref{bbul bbbul} and \autoref{last cor} \ref{main 2}.
\end{proof}

\appendix

\section{A Lemma of Nakayama}
\label{appendix Nak Lem}

In this appendix, we prove an extension of \cite[Lemma 2.2.6 (i)]{Nak} (cf. \autoref{lem: qint Nak in other words}) to the case where
\(M\), \(L\) are quasi-integral, and \(N\) is an arbitrary sharp monoid (cf. \autoref{Nak extension}).


\begin{lem}
  \label{lem: quot is surj}
  Let \(f:N\to M\) and \(g:N\to L\) be morphisms of (arbitrary) monoids.
  Write \(P\dfn M\amalg_NL\); \(i_M: M\to P\) and \(i_L:L\to P\) for the natural inclusions;
  \(\pi:M\times L\to P\) for the morphism determined by \(i_M\) and \(i_L\).
  Then \(\pi\) is surjective.
\end{lem}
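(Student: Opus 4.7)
The plan is to reduce to the explicit construction of the push-out in the category of commutative monoids. Since binary coproducts of commutative monoids coincide with binary products, one has a natural identification $M \amalg L \cong M \times L$, where the coproduct injections into $M \times L$ are given by $m \mapsto (m,0)$ and $l \mapsto (0,l)$. Under this identification, the morphism $\pi$ of the lemma is precisely the canonical morphism $M \amalg L \to P$ induced by the universal property of the coproduct.

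Next, I would recall that the push-out $P = M \amalg_N L$ may be constructed as the coequalizer of the pair of morphisms $N \rightrightarrows M \amalg L$ given by $n \mapsto (f(n),0)$ and $n \mapsto (0, g(n))$. Concretely, $P$ is the quotient of $M \times L$ by the smallest congruence relation $\sim$ that contains the pairs $\bigl((f(n),0),\,(0,g(n))\bigr)$ for $n \in N$, and the two natural maps $i_M : M \to P$ and $i_L : L \to P$ are the composites $M \hookrightarrow M \times L \twoheadrightarrow P$ and $L \hookrightarrow M \times L \twoheadrightarrow P$, respectively.

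Since $\pi$ is the quotient map $M \times L \twoheadrightarrow (M \times L)/{\sim} = P$, it is surjective by construction. I do not anticipate any obstacle here — the statement is essentially a restatement of the fact that every element of $P$ admits a representative $(m,l) \in M \times L$, i.e., can be written in the form $i_M(m) + i_L(l)$.
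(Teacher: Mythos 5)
Your proof is correct, but it proceeds differently from the paper. You argue via the explicit construction of colimits of commutative monoids: the coproduct \(M\amalg L\) is \(M\times L\), the push-out \(P\) is the coequalizer of \(N\rightrightarrows M\times L\), i.e.\ the quotient of \(M\times L\) by the congruence generated by the pairs \(\bigl((f(n),0),(0,g(n))\bigr)\), and \(\pi\) is identified (via the universal property of the coproduct) with the quotient map, which is surjective by construction. The paper instead gives a purely formal argument that never invokes the explicit construction: writing \(P'\dfn \im(\pi)\subset P\), the maps \(i_M\) and \(i_L\) factor through \(P'\) as \(i_M'\), \(i_L'\), and since these still satisfy \(i_M'\circ f = i_L'\circ g\), the universal property of the push-out yields a unique \(r:P\to P'\) with \(r\circ i_M=i_M'\) and \(r\circ i_L=i_L'\); the uniqueness clause then forces \(i\circ r=\id_P\) for the inclusion \(i:P'\hookrightarrow P\), so \(i\) is surjective and hence \(\pi\) is surjective. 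Your route is concrete and makes the lemma transparent (every element of \(P\) has a representative \((m,l)\)), at the cost of relying on the description of coproducts and coequalizers in \(\Mon\); the paper's retraction argument is softer, uses only the universal property together with the fact that \(\im(\pi)\) is a submonoid through which \(i_M\) and \(i_L\) factor, and so transfers verbatim to push-outs in other categories of algebraic structures. Both are complete proofs.
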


\begin{proof}
  Write \(P'\dfn \im(\pi)\), \(i:P'\hookrightarrow P\) for the inclusion morphism, and for each \(*\in \{M,L\}\), \(i_*':*\to P'\) for the morphism induced by \(i_*\).
  Then, by the universality of push-outs, there exists a unique morphism of monoids \(r:P\to P'\) such that \(r\circ i_M = i_M'\), and \(r\circ i_L = i_L'\).
  This implies that \(i\circ r = \id_P\).
  Thus \(i\) is surjective, which implies that \(\pi\) is surjective.
  This completes the proof of \autoref{lem: quot is surj}.
\end{proof}

\begin{lem}
  \label{lem: sharp fs pushout sharp}
  Let \(L,M,N\) be sharp monoids and
  \(f:N\to M\), \(g:N\to L\) local morphisms of monoids.
  Write \(P\dfn M\amalg_N L\);
  \(i_M:M\to P\) and \(i_L:L\to P\) for the natural inclusions;
  \(\pi:M\times L\to P\) for the morphism determined by \(i_M\) and \(i_L\).
  Then \(i_M\), \(i_L\), and \(\pi\) are local, and \(P\) is sharp.
\end{lem}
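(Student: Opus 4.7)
The plan is to realize $P$ explicitly as the quotient of $M \times L$ by the smallest congruence $\sim$ such that $(f(n), 0) \sim (0, g(n))$ for all $n \in N$, with $\pi : M \times L \to P$ the quotient map (surjectivity was already established in \autoref{lem: quot is surj}). Concretely, $\sim$ is the transitive closure of the symmetric relation whose pairs are all translates $\bigl((u+f(n),\,v),\,(u,\,v+g(n))\bigr)$ for $u \in M$, $v \in L$, $n \in N$; so $(a,b) \sim (a',b')$ if and only if one can pass from $(a,b)$ to $(a',b')$ by a finite chain of such ``elementary moves''.

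The crucial observation is that the $\sim$-equivalence class of $(0,0)$ is $\{(0,0)\}$. Indeed, if a single elementary move sends $(0,0)$ to some $(x,y)$, then $(0,0)$ must be expressed either as $(u+f(n), v)$ or as $(u, v+g(n))$. In the first case, sharpness of $M$ forces $u = 0$ and $f(n) = 0$; since $f$ is local and $N$ is sharp, $n \in f^{-1}(M^\times) \subset N^\times = 0$, so $n = 0$ and the move returns $(0, g(0)) = (0,0)$. The second case is symmetric using sharpness of $L$ and locality of $g$. Iterating, one obtains $\pi^{-1}(0) = \{(0,0)\}$.

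From this the remaining assertions follow quickly. If $x \in P^\times$ with inverse $y$, write $x = \pi(m,l)$ and $y = \pi(m',l')$; then $\pi(m+m', l+l') = 0$ forces $(m+m', l+l') = (0,0)$, and sharpness of $M$ and $L$ gives $m = l = 0$, so $P$ is sharp. Consequently $\pi^{-1}(P^\times) = \pi^{-1}(0) = \{(0,0)\} = (M \times L)^\times$, so $\pi$ is local. Finally, $i_M = \pi \circ \iota_M$ where $\iota_M : M \to M \times L$, $m \mapsto (m,0)$ is manifestly local, and the composition of local morphisms is local; the same applies to $i_L$.

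The main subtlety is purely the bookkeeping in the first step: verifying that the description of $\sim$ in terms of elementary moves is exhaustive, so that the single-step analysis of ``moves from $(0,0)$'' really covers every way $(0,0)$ can be $\sim$-related to another element. Beyond this, the proof reduces to routine sharp/local chases that use precisely the hypotheses on $M, L, N, f, g$ at the expected points.
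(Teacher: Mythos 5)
Your proof is correct, but it takes a genuinely different route from the paper. You work with the explicit presentation of the pushout: since \(\Mon\) here is the category of commutative monoids, \(M\amalg_N L\) is the quotient of \(M\times L\) by the congruence generated by \((f(n),0)\sim(0,g(n))\), and you analyze the congruence via translated elementary moves to show that the class of \((0,0)\) is a singleton; sharpness and locality of \(\pi\), \(i_M\), \(i_L\) then follow by direct element chases. The one point you lean on without proof --- that the congruence generated by a relation in a commutative monoid is exactly the reflexive--transitive closure of the translated elementary relations --- is a standard fact and routine to verify (closure under translation plus chains gives compatibility with addition), so I do not regard it as a gap, though in a submitted version it deserves either a short verification or a citation. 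The paper avoids this combinatorial bookkeeping entirely: it maps everything to the two-element multiplicative monoid \((\F_2,\times)\) via the unique local morphism \(\varphi_{(-)}\), uses locality of \(f\) and \(g\) to see \(\varphi_M\circ f=\varphi_N=\varphi_L\circ g\), and then the universal property of the pushout produces \(h:P\to(\F_2,\times)\) with \(h\circ\pi=\varphi_{M\times L}\); surjectivity of \(\pi\) (\autoref{lem: quot is surj}) then yields \(h^{-1}(1)=\{0\}\), from which sharpness of \(P\) and locality of \(i_M\), \(i_L\), \(\pi\) all drop out at once. Your approach buys an explicit description of \(\pi^{-1}(0)\) and makes visible exactly where each hypothesis (sharpness of \(M\), \(L\), \(N\); locality of \(f\), \(g\)) enters, at the cost of invoking the structure theory of generated congruences; the paper's argument is shorter, purely formal, and never needs to know what the pushout looks like.
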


\begin{proof}
  Write \((\F_2,\times)\) for the underlying multiplicative monoid of the finite field \(\F_2 = \Z/2\Z\) (so \(\F_2^{\times}= \{1\}\)).
  For any monoid \(*\),
  write \(\varphi_*:* \to (\F_2,\times)\) for the unique local morphism of monoids.
  Since \(f\) and \(g\) are local,
  it holds that \(\varphi_M \circ f = \varphi_N = \varphi_L\circ g\).
  Hence there exists a unique morphism \(h:P\to (\F_2,\times)\) such that \(\varphi_M = h\circ i_M\), and \(\varphi_L = h\circ i_L\).
  Thus \(\varphi_{M\times L} = h\circ \pi\).
  Since \(\pi\) is surjective (cf. \autoref{lem: quot is surj}), \(h^{-1}(1) = 0\).
  This implies that \(P\) is sharp, that \(h = \varphi_P\), and that \(i_M\), \(i_L\), and \(\pi\) are local.
  This completes the proof of \autoref{lem: sharp fs pushout sharp}.
\end{proof}

\begin{defi}
  Let \(M\) be a monoid.
  Then we shall write \(\eta_M:M\to M^{\gp}\) for the natural morphism to the groupification.
\end{defi}

\begin{lem}
  \label{app lem: qint Nak in other words}
  Let \(L,M,N\) be sharp monoids and
  \(f:N\to M, g:N\to L\) local morphisms of monoids.
  Then the following assertions are equivalent:
  \begin{enumerate}
    \item \label{app Nak P is qint}
    \(M\amalg_NL\) is quasi-integral.
    \item \label{app Nak qint condition}
    For any elements \(m\in M\), \(l\in L\), and \(n\in N^{\gp}\),
    if \(f^{\gp}(n) = \eta_M(m)\), and \(-g^{\gp}(n) = \eta_L(l)\),
    then \(m = 0\), and \(l = 0\).
    \item \label{app Nak qint condition 3}
    \(M\), \(L\) are quasi-integral, and, moreover, for any element \(n\in N^{\gp}\), if \(f^{\gp}(n) \in M^{\inte}\), and \(-g^{\gp}(n) \in L^{\inte}\), then \(f^{\gp}(n) = 0\), and \(g^{\gp}(n) = 0\).
  \end{enumerate}
\end{lem}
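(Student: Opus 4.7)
The plan is to derive the cyclic equivalence directly from the definition of quasi-integrality by means of the two preceding appendix lemmas, without reworking Nakayama's original argument in any essential way. The key identifications will be (a) an explicit description of when an element of $P \dfn M \amalg_N L$ equals zero, and (b) an explicit description of when an element $[m,l] \in P$ vanishes in the groupification $P^{\gp}$.

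First I would establish $(i) \Leftrightarrow (ii)$. By \autoref{lem: quot is surj}, the morphism $\pi : M \times L \to P$ is surjective, so every element of $P$ has the form $[m,l]$ with $m \in M$, $l \in L$. By \autoref{lem: sharp fs pushout sharp}, $P$ is sharp and $\pi$ is local; since $M$ and $L$ are also sharp, this yields $\pi^{-1}(0) = \pi^{-1}(P^{\times}) = (M \times L)^{\times} = \{(0,0)\}$, so $[m,l] = 0$ in $P$ if and only if $m = 0$ and $l = 0$. Next, since groupification commutes with colimits, $P^{\gp}$ is the pushout $M^{\gp} \oplus_{N^{\gp}} L^{\gp}$ in the category of abelian groups, and an elementary computation shows that $[m, l]$ has zero image in $P^{\gp}$ if and only if there exists $n \in N^{\gp}$ with $\eta_M(m) = f^{\gp}(n)$ and $\eta_L(l) = -g^{\gp}(n)$. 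Combining these two observations, the assertion ``every $[m,l] \in P$ that is trivial in $P^{\gp}$ is already trivial in $P$'' translates into precisely condition (ii).

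For $(ii) \Leftrightarrow (iii)$, I would argue by direct substitution. For $(ii) \Rightarrow (iii)$, taking $n = 0$ and $l = 0$ in (ii) shows every $m \in M$ with $\eta_M(m) = 0$ is zero, so $M$ is quasi-integral; symmetrically for $L$. The second clause of (iii) is immediate on noting that the hypothesis $f^{\gp}(n) \in M^{\inte}$, $-g^{\gp}(n) \in L^{\inte}$ furnishes elements $m \in M$, $l \in L$ to which (ii) applies, forcing $m = l = 0$ and hence $f^{\gp}(n) = g^{\gp}(n) = 0$. Conversely, given $m$, $l$, $n$ as in the hypothesis of (ii), the elements $f^{\gp}(n) = \eta_M(m)$ and $-g^{\gp}(n) = \eta_L(l)$ lie in $M^{\inte}$ and $L^{\inte}$ respectively, so (iii) gives $\eta_M(m) = \eta_L(l) = 0$, and quasi-integrality of $M$ and $L$ yields $m = l = 0$.

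The only slightly delicate step is the bookkeeping behind (b): one must identify $P^{\gp}$ with the pushout of abelian groups $M^{\gp} \oplus_{N^{\gp}} L^{\gp}$ and then observe that, as a quotient of $M^{\gp} \oplus L^{\gp}$ by the subgroup generated by $\{(f^{\gp}(n), -g^{\gp}(n)) : n \in N^{\gp}\}$, an element $(m, l)$ vanishes there exactly when it lies in that image. Everything else is formal; no hypothesis of saturation or finite generation on $N$ enters, which is precisely what makes this statement an honest extension of \cite[Lemma 2.2.6 (i)]{Nak} in the direction advertised at the beginning of the appendix.
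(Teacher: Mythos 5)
Your proposal is correct and follows essentially the same route as the paper's proof: both rest on the surjectivity of \(\pi\) (\autoref{lem: quot is surj}), the sharpness of \(P\) together with the locality of \(\pi\) (\autoref{lem: sharp fs pushout sharp}), and the identification of \(P^{\gp}\) with the push-out of groups to detect when \(\eta_P(\pi(m,l))=0\); your treatment of (ii) \(\Leftrightarrow\) (iii) just spells out what the paper dismisses as immediate from the definition of quasi-integrality.
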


\begin{proof}
  Write \(P\dfn M\amalg_NL\);
  \(i_M: M\to P\) and \(i_L:L\to P\) for the natural inclusions;
  \(\pi:M\times L\to P\) for the morphism determined by \(i_M\) and \(i_L\).

  To prove that \ref{app Nak P is qint} implies \ref{app Nak qint condition}, assume that \(P\) is quasi-integral.
  Let \(m\in M\), \(l\in L\), and \(n\in N^{\gp}\) be elements such that \(f^{\gp}(n) = \eta_M(m)\), and \(-g^{\gp}(n) = \eta_L(l)\).
  Then
  \[\eta_P(\pi(m,l)) = \pi^{\gp}(\eta_M(m),\eta_L(l)) = \pi^{\gp}(f^{\gp}(n),-g^{\gp}(n)) = 0.\]
  Since \(P\) is quasi-integral, \(\pi(m,l) = 0\).
  Moreover, since \(\pi\) is local (cf. \autoref{lem: sharp fs pushout sharp}), we conclude that \(m=0\), and \(l=0\).
  This completes the proof of the implication \ref{app Nak P is qint} \(\Rightarrow\) \ref{app Nak qint condition}.

  Next, we prove that \ref{app Nak qint condition} implies \ref{app Nak P is qint}.
  Assume that assertion \ref{app Nak qint condition} holds.
  Let \(p\in P\) be an element such that \(\eta_P(p) = 0\).
  Since \(\pi\) is surjective (cf. \autoref{lem: quot is surj}), there exist elements \(m\in M\) and \(l\in L\) such that \(p = \pi(m,l)\).
  Then
  \[\pi^{\gp}(\eta_M(m),\eta_L(l)) = \eta_P(\pi(m,l)) = \eta_P(p) = 0.\]
  Hence there exists an element \(n\in N^{\gp}\) such that \(\eta_M(m) = f^{\gp}(n)\), and \(\eta_L(l) = -g^{\gp}(n)\).
  Thus, by assertion \ref{app Nak qint condition}, it holds that \(m=0\), and \(l=0\).
  This implies that \(p=0\), i.e., that \(P\) is quasi-integral.
  This completes the proof of the implication \ref{app Nak qint condition} \(\Rightarrow\) \ref{app Nak P is qint}.

  Finally, by the definition of the notion of a quasi-integral monoid, assertion \ref{app Nak qint condition} is equivalent to assertion \ref{app Nak qint condition 3}.
  This completes the proof of \autoref{app lem: qint Nak in other words}.
\end{proof}

\begin{cor}[{cf. \cite[Lemma 2.2.6 (i)]{Nak}, \autoref{lem: qint Nak in other words}}]
  \label{Nak extension}
  Let \(L,M,N\) be sharp monoids and
  \(f:N\to M, g:N\to L\) local morphisms of monoids.
  Assume that \(M\) and \(L\) are quasi-integral.
  Then the following assertions are equivalent:
  \begin{enumerate}
    \item \label{Nak cor P is qint}
    \(M\amalg_NL\) is quasi-integral.
    \item \label{Nak cor qint condition}
    For any element \(n\in N^{\gp}\),
    if \(f^{\gp}(n) \in M^{\inte}\), and \(-g^{\gp}(n)\in L^{\inte}\),
    then \(f^{\gp}(n) = 0\), and \(g^{\gp}(n) = 0\).
  \end{enumerate}
\end{cor}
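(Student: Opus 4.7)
The plan is to deduce this corollary immediately from \autoref{app lem: qint Nak in other words}, which already establishes the equivalence of three conditions (\ref{app Nak P is qint}), (\ref{app Nak qint condition}), (\ref{app Nak qint condition 3}) without assuming quasi-integrality of \(M\) and \(L\) in the hypotheses. The corollary is precisely the specialization of that lemma in which the quasi-integrality of \(M\) and \(L\) is built into the hypotheses rather than into the side conditions.

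Concretely, under the standing assumption of the corollary that both \(M\) and \(L\) are quasi-integral, the first clause of condition (\ref{app Nak qint condition 3}) in \autoref{app lem: qint Nak in other words} — namely ``\(M\) and \(L\) are quasi-integral'' — is automatic. What remains of condition (\ref{app Nak qint condition 3}) is exactly condition (\ref{Nak cor qint condition}) of the corollary. Meanwhile, condition (\ref{app Nak P is qint}) of the lemma is literally condition (\ref{Nak cor P is qint}) of the corollary. Applying the equivalence (\ref{app Nak P is qint}) \(\Leftrightarrow\) (\ref{app Nak qint condition 3}) from \autoref{app lem: qint Nak in other words} then gives the desired equivalence (\ref{Nak cor P is qint}) \(\Leftrightarrow\) (\ref{Nak cor qint condition}).

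There is no real obstacle to overcome at this stage: the substantive work — namely the surjectivity of \(\pi : M \times L \to P\) (\autoref{lem: quot is surj}) and the locality/sharpness of the push-out in the sharp setting (\autoref{lem: sharp fs pushout sharp}), which together let one translate between elements of \(P\) and pairs \((m,l) \in M \times L\) modulo the relation imposed by \(N\) — has already been absorbed into the proof of \autoref{app lem: qint Nak in other words}. Hence the proof of \autoref{Nak extension} is a one-line appeal to that lemma.
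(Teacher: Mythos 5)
Your proposal is correct and coincides with the paper's own proof: the corollary is deduced immediately from the equivalence \ref{app Nak P is qint} \(\Leftrightarrow\) \ref{app Nak qint condition 3} of \autoref{app lem: qint Nak in other words}, with the standing quasi-integrality of \(M\) and \(L\) absorbing the first clause of condition \ref{app Nak qint condition 3}. Nothing further is needed.
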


\begin{proof}
  \autoref{Nak extension} follows immediately from \autoref{app lem: qint Nak in other words} \ref{app Nak P is qint} \(\Leftrightarrow\) \ref{app Nak qint condition 3}.
\end{proof}

\end{document}